\providecommand{\U}[1]{\protect\rule{.1in}{.1in}}
\newtheorem{theorem}{Theorem}
\theoremstyle{plain}
\newtheorem{acknowledgement}{Acknowledgement}
\newtheorem{definition}{Definition}
\newtheorem{lemma}{Lemma}
\newtheorem{problem}{Problem}
\newtheorem{proposition}{Proposition}
\newtheorem{remark}{Remark}
\numberwithin{equation}{section}
\numberwithin{theorem}{section}
\numberwithin{proposition}{section}
\numberwithin{remark}{section}
\numberwithin{definition}{section}
\numberwithin{lemma}{section}
\numberwithin{corollary}{section}
\numberwithin{example}{section}
\numberwithin{claim}{section}
\begin{document}
\title[Multiscale analysis of wave equations]{Multiscale analysis of semilinear damped stochastic wave equations}
\author{Aurelien Fouetio}
\address{A. Fouetio, Department of Mathematics and Computer Science, University of
Dschang, P.O. Box 67, Dschang, Cameroon}
\email{aurelienfouetio@yahoo.fr}
\author{Gabriel Nguetseng}
\address{G. Nguetseng, Department of Mathematics, University of Yaounde I, P.O. Box
812, Yaounde, Cameroon}
\email{nguetsengg@yahoo.fr}
\author{Jean Louis Woukeng}
\address{J.L. Woukeng, Department of Mathematics and Computer Science, University of
Dschang, P.O. Box 67, Dschang, Cameroon}
\email{jwoukeng@yahoo.fr}
\date{}
\subjclass[2000]{35B40, 60H15, 46J10}
\keywords{Hyperbolic stochastic equations, Wiener Process, sigma convergence, tightness
of probability measures.}

\begin{abstract}
In this paper we proceed with the multiscale analysis of semilinear damped
stochastic wave motions. The analysis is made by combining the well-known
sigma convergence method with its stochastic counterpart, associated to some
compactness results such as the Prokhorov and Skorokhod theorems. We derive
the equivalent model, which is of the same type as the micro-model.

\end{abstract}
\maketitle

\section{Introduction}

An important class of physical phenomena is the one represented by wave
phenomena. In continuous media, the most frequent form of energy transmission
is through waves. As clearly known, wave nature is apparent in very important
phenomena such as the sound propagation, the electromagnetic radiation's
propagation, just to cite a few. It appears in many fields of science such as
the acoustics, optics, geophysics, radiophysics and mechanics. However, in
order to take into account some physical data such as turbulence, the most
accurate models are known to be stochastic ones. Indeed, damping may occur
through wave scattering due to the inhomogeneity of the medium and the surface
(size of inhomogeneity in relation to wavelength, inhomogeneities distributed
in a continuous manner or in the form of a random/deterministic discrete
scattering elements). Hence, because of damping, scattered waves may cause
fluctuations of the amplitude, leading to the justification of the use of
stochastic models. In the quest of mastering wave motion, it is very important
to study the phenomenon starting from a micro-model (like in (\ref{eqnn1.1})
below) in order to predict the overall behavior on the large scale model.
Hence the importance of multiscale analysis in such a study.

To be more precise, we study waves motion through an inhomogeneous medium made
of microstructures of small size (representing the inhomogeneities). Because
of what we said above, we assume that the microstructures are
deterministically distributed in the medium, and their distribution is
represented by an assumption made on the fast spatial variable
$y=x/\varepsilon$ covering a wide range of behaviors such as the uniform (or
periodic) distribution, the almost periodic distribution and the asymptotic
almost periodic one. The study also takes into account an assumption made on
the fast time variable $\tau=t/\varepsilon$,\ which allows us to predict the
long time wave behavior. The micro-model is presented in the following lines.

Let $Q$ be a Lipschitz domain of $\mathbb{R}^{N}$, $T$ and $\varepsilon$ be
positive real numbers with $0<\varepsilon<1$ , $Q_{T}=Q\times\left(
0,T\right)  $ and $W=\left(  W\left(  t\right)  \right)  _{0\leq t\leq T}$ a
$m$-dimensional standard Wiener process defined on a given probability space
$\left(  \Omega,\mathcal{F},\mathbb{P}\right)  $ equipped with the filtration
$\left(  \mathcal{F}_{t}\right)  _{0\leq t\leq T}$. We consider the elliptic
linear partial differential operator%
\[
P^{\varepsilon}=-\sum_{i,j=1}^{N}\frac{\partial}{\partial x_{i}}\left(
a_{ij}^{\varepsilon}(x)\frac{\partial}{\partial x_{j}}\right)  \text{.}%
\]
We study the asymptotic behavior (as $0<\varepsilon\rightarrow0$) of the
sequence of solutions $u_{\varepsilon}$ of the following initial-boundary
value problem
\begin{equation}
\left\{
\begin{array}
[c]{l}%
du_{\varepsilon}^{\prime}+P^{\varepsilon}u_{\varepsilon}dt-\Delta
u_{\varepsilon}^{\prime}dt=f(\frac{x}{\varepsilon},\frac{t}{\varepsilon
},u_{\varepsilon}^{\prime})dt+g(\frac{x}{\varepsilon},\frac{t}{\varepsilon
},u_{\varepsilon}^{\prime})dW\text{ \ in }Q_{T}\\
u_{\varepsilon}=0\text{ on }\partial Q\times\left(  0,T\right)  \text{
\ \ \ \ \ \ \ \ \ \ \ \ \ \ \ \ \ \ \ \ \ \ \ \ \ \ \ \ \ \ \ \ \ \ \ \ \ \ \ \ \ \ \ \ \ \ \ \ \ \ \ \ \ \ \ \ \ \ \ \ \ }%
\\
u_{\varepsilon}\left(  x,0\right)  =u^{0}\left(  x\right)  \text{ and
}u_{\varepsilon}^{\prime}\left(  x,0\right)  =u^{1}\left(  x\right)  \text{ in
}Q\text{,\ \ \ \ \ \ \ \ \ \ \ \ \ \ \ \ \ \ \ \ \ \ \ \ \ \ \ \ \ \ \ }%
\end{array}
\right.  \label{eqnn1.1}%
\end{equation}
where $u_{\varepsilon}^{\prime}=\frac{\partial u_{\varepsilon}}{\partial t}$.
We impose on the coefficients of (\ref{eqnn1.1}) the following constraints:

\begin{itemize}
\item[(\textbf{A1})] The matrix $a^{\varepsilon}=(a_{ij}^{\varepsilon})_{1\leq
i,j\leq N}$ is defined by $a_{ij}^{\varepsilon}\left(  x\right)
=a_{ij}\left(  x,\frac{x}{\varepsilon}\right)  $ ($x\in Q$), where
\end{itemize}

\begin{enumerate}
\item[(1)] $a_{ij}\in\mathcal{C}(\overline{Q},L^{\infty}(\mathbb{R}_{y}%
^{N}))(1\leq i,j\leq N)$ with $a_{ij}=a_{ji}$,

\item[(2)] there exists a constant $\alpha>0$ such that
\[
\sum_{i,j=1}^{N}a_{ij}(x,y)\xi_{i}\xi_{j}\geq\alpha\left\vert \xi\right\vert
^{2}\text{,}%
\]
\ 
\end{enumerate}

for all $\xi\in\mathbb{R}^{N},\ x\in\overline{Q}$ and a.e. $y\in\mathbb{R}%
^{N}$.

\begin{itemize}
\item[(\textbf{A2})] We define the maps $f$, $g$ and the functions $u^{0}$,
$u^{1}$ such that:

\item[(1)] The function $f:(y,\tau,v)\longmapsto f(y,\tau,v)$ from
$\mathbb{R}^{N}\times\mathbb{R}\times\mathbb{R}$ into $\mathbb{R}$ satisfies
the properties:

\begin{enumerate}
\item[(i)] $f$ is measurable,

\item[(ii)] there exists a constant $c_{1}>0$ such that $\left\vert
f(y,\tau,v)\right\vert ^{2}\leq c_{1}(1+\left\vert v\right\vert ^{2})$, for
almost all ($y,\tau)\in\mathbb{R}^{N}\times\mathbb{R}$ and for all
$v\in\mathbb{R}$.

\item[(iii)] there exists a constant $c_{2}>0$ such that%
\[
\left\vert f(y,\tau,v_{1})-f(y,\tau,v_{2})\right\vert \leq c_{2}\left\vert
v_{1}-v_{2}\right\vert
\]
for almost \ all ($y,\tau)\in\mathbb{R}^{N}\times\mathbb{R}$ and for all
$v_{1},v_{2}\in\mathbb{R}$.
\end{enumerate}

\item[(2)] The function $g:(y,\tau,v)\longmapsto g(y,\tau,v)$ from
$\mathbb{R}^{N}\times\mathbb{R}\times\mathbb{R}$ into $\mathbb{R}^{m}$
satisfies the properties:

\begin{enumerate}
\item[(i)] $g$ is measurable,

\item[(ii)] there exists a constant $c_{3}>0$ such that $\left\vert
g(y,\tau,v)\right\vert ^{2}\leq c_{3}(1+\left\vert v\right\vert ^{2})$, for
a.e. ($y,\tau)\in\mathbb{R}^{N}\times\mathbb{R}$ and for all $v\in\mathbb{R}$.

\item[(iii)] there exists a constant $c_{4}>0$ such that%
\[
\left\vert g(y,\tau,v_{1})-g(y,\tau,v_{2})\right\vert \leq c_{4}\left\vert
v_{1}-v_{2}\right\vert
\]
for a.e. ($y,\tau)\in\mathbb{R}^{N}\times\mathbb{R}$ and for all $v_{1}%
,v_{2}\in\mathbb{R}$.
\end{enumerate}

\item[(3)] $u^{0}\in H_{0}^{1}\left(  Q\right)  $ and $u^{1}\in L^{2}\left(
Q\right)  .$
\end{itemize}

Under the above conditions, it is easily seen that if $u_{\varepsilon}$ and
$v_{\varepsilon}$ are two solutions to (\ref{eqnn1.1})\ on the same stochastic
system with the same initial condition then $u_{\varepsilon}(t)=v_{\varepsilon
}(t)$ in $H_{0}^{1}\left(  Q\right)  $ almost surely for any $t$. Thanks to
this fact together with Yamada-Wanatabe's Theorem \cite{9} and the existence
Theorem in \cite[Theorem 2.2]{10} , we see that problem (\ref{eqnn1.1}) (for
any fixed $\varepsilon>0$) possesses a unique strong solution $u_{\varepsilon
}$ satisfying:

\begin{itemize}
\item[(i)] $u_{\varepsilon}$ and $u_{\varepsilon}^{\prime}$ are continuous
with respect to time in $H_{0}^{1}\left(  Q\right)  $ and $L^{2}\left(
Q\right)  $ respectively,

\item[(ii)] $u_{\varepsilon}$ and $u_{\varepsilon}^{\prime}$ are
$\mathcal{F}_{t}$-measurable,

\item[(iii)] $u_{\varepsilon}\in L^{2}(\Omega,\mathcal{F},\mathbb{P}%
;L^{\infty}(0,T;H_{0}^{1}(Q)))$, $u_{\varepsilon}^{\prime}\in L^{2}%
(\Omega,\mathcal{F},\mathbb{P};L^{\infty}(0,T;L^{2}(Q))\cap L^{2}%
(0,T;H_{0}^{1}(Q)))$,

\item[(iv)] $u_{\varepsilon}$ satisfies
\begin{align*}
&  \left(  u_{\varepsilon}^{\prime}(t),\phi\right)  +\int_{0}^{t}\left[
\left(  P^{\varepsilon}u_{\varepsilon}\left(  \tau\right)  ,\phi\right)
-\left(  \Delta u_{\varepsilon}^{\prime}\left(  \tau\right)  ,\phi\right)
\right]  d\tau\\
&  =\left(  u^{1},\phi\right)  +\int_{0}^{t}\left(  f\left(  \frac
{x}{\varepsilon},\frac{\tau}{\varepsilon},u_{\varepsilon}^{\prime}%
(\tau)\right)  ,\phi\right)  d\tau+\left(  \int_{0}^{t}g\left(  \frac
{x}{\varepsilon},\frac{\tau}{\varepsilon},u_{\varepsilon}^{\prime}%
(\tau)\right)  dW\left(  \tau\right)  ,\phi\right)  \\
\text{for all }\phi &  \in H_{0}^{1}\left(  Q\right)  \text{ and for a.e.
}t\in\left(  0,T\right)  \text{.}%
\end{align*}

\item[(v$)$] $u_{\varepsilon}(0)=u^{0}$.
\end{itemize}

Moreover the solution $u_{\varepsilon}$ satisfies the following a priori
estimates:%
\begin{equation}
\mathbb{E}\sup_{0\leq t\leq T}\left\Vert u_{\varepsilon}(t)\right\Vert
_{H_{0}^{1}\left(  Q\right)  }^{4}+\mathbb{E}\sup_{0\leq t\leq T}\left\Vert
u_{\varepsilon}^{\prime}(t)\right\Vert _{L^{2}\left(  Q\right)  }^{4}\leq
C\text{.} \label{eqe}%
\end{equation}%
\begin{equation}
\text{ }\mathbb{E}\sup_{\left\vert \theta\right\vert \leq\delta}\int_{0}%
^{T}\left\Vert u_{\varepsilon}^{\prime}(t+\theta)-u_{\varepsilon}^{\prime
}(t)\right\Vert _{H^{-1}(Q)}^{2}dt\leq C\delta\text{.} \label{eqee}%
\end{equation}
where $\delta$ is sufficiently small and $C$ is a positive constant.

The estimates (\ref{eqe})-(\ref{eqee}) together with some other estimates, are
proved in Appendix A1 at the end of the paper.

Let us consider the space $S=\mathcal{C}(0,T;\mathbb{R}^{m})\times
(L^{2}(0,T;L^{2}(Q))\cap\mathcal{C}(0,T;H^{-1}(Q)))\times\mathcal{C}%
(0,T;L^{2}(Q))$, a metric space equipped with its Borel $\sigma$-algebra
$\mathcal{B(}S)$. For $0<\varepsilon<1$, $\Psi_{\varepsilon}$ be the
measurable $S$-valued mapping defined on\ $(\Omega,\mathcal{F},\mathbb{P}%
)$\ as
\[
\Psi_{\varepsilon}(\omega)=(W(\omega,\cdot),u_{\varepsilon}(\omega
,\cdot),u_{\varepsilon}^{\prime}(\omega,\cdot))
\]
where $u_{\varepsilon}$ is the solution of problem (\ref{eqnn1.1}). We
introduce the image of $\mathbb{P}$ under $\Psi_{\varepsilon}$ defined by
\begin{equation}
\pi^{\varepsilon}(B)=\mathbb{P}(\Psi_{\varepsilon}^{-1}(B))\text{ for any
}B\in\mathcal{B}(S). \label{eqqq}%
\end{equation}

We easily prove as in \cite[Lemma 7]{7'} that the family of probability
measures $(\pi^{\varepsilon})$ constructed above is tight on $(S,\mathcal{B}%
(S))$.

One of the motivation of this work lies on the presence of the damping term
$-\Delta u_{\varepsilon}^{\prime}$. Indeed as seen in Lemma \ref{l2.1}, it
allows to choose the corrector term to be independent of the fast time
variable, although the micro-problem (\ref{eqnn1.1}) depends on the fast time
variable $\tau=t/\varepsilon$. This simplifies the homogenization process as
seen in the passage to the limit step.

The homogenization of hyperbolic stochastic partial differential equations
with rapidly oscillating coefficients has been considered for the first time
by Mohammed and Sango in a series of papers \cite{7', MS2016-1, MS2016-2,
M2017}. These works deal with linear as well as nonlinear type equations and
assume only the periodicity on their coefficients. Also they do not consider
oscillations in time $\tau=t/\varepsilon$, which is one of the main features
of the work under consideration here. Because of the oscillation in time, the
limit passage in the stochastic term is more involved as seen in the proof of
Lemma \ref{l3.2}.

Since we consider a general deterministic behavior with respect to the
oscillations in space and time, our study therefore falls within the scope of
the \textit{sigma-convergence }concept introduced in 2003 in \cite{Hom1} by
the second author. In order to get the homogenized limit, we combine two
different concepts of sigma-convergence: the sigma-convergence for stochastic
processes (see Definition \ref{d2.1}), which helps to pass to the limit in the
stochastic term, and the usual sigma-convergence method, which is used to
study the terms not involving the stochastic integral; see the proof of
Proposition \ref{p2.1}. It is worth noticing that the linear counterpart of
problem (\ref{eqnn1.1}) has been considered in \cite{FW2017}.

The layout of the paper is as follows. In section 2, we present an overview of
the fundamentals of the sigma-convergence method, which has been introduced in
\cite{Hom1}. Section 3 deals with the homogenization result. In Section 4, we
provide some applications of the homogenization result. We end the work with
two appendices in which we provide some key estimates of the sequence of
solutions to (\ref{eqnn1.1}) together with a criterion under which the limit
of stochastic integrals driven by convergent semimartingales is the stochastic
integral of the limits.

\section{Sigma convergence concept}

We begin this section by recalling some notion in connection with the
well-known concept of algebras with mean value.

Let $A$ be an algebra with mean value, that is, a closed subalgebra of bounded
uniformly continuous real-valued functions on $\mathbb{R}^{N}$, $\mathrm{BUC}%
\left(  \mathbb{R}^{N}\right)  $, which contains the constants, is translation
invariant and is such that any of its elements possesses a mean value in the
following sense: for every $u\in A$, the sequence $\left(  u^{\varepsilon
}\right)  _{\varepsilon>0}$ ($u^{\varepsilon}\left(  x\right)  =u\left(
\frac{x}{\varepsilon}\right)  $) weakly$\ast$ converges in $L^{\infty
}(\mathbb{R}^{N})$ to some real number $M(u)$ (called the mean value of $u$)
as $\varepsilon\rightarrow0$. The real number $M(u)$ is defined by
\[
M\left(  u\right)  =\lim_{R\rightarrow\infty}\frac{1}{\left\vert
B_{R}\right\vert }\int_{B_{R}}u\left(  y\right)  dy\text{ for }u\in A\text{,}%
\]
where $B_{R}$ is the open ball in $\mathbb{R}^{N}$ of radius $R$ centered at
the origin.

We consider an algebra with mean value $A$ on $\mathbb{R}^{N}$ and $m\in%
\mathbb{N}
$. We define
\[
A^{\infty}=\left\{  u\in\mathcal{C}^{\infty}\left(  \mathbb{R}^{N}\right)
:D_{y}^{\alpha}u\in A\text{ }\forall\alpha\in\mathbb{N}^{N}\right\}  \text{,}%
\]
a Fr\'{e}chet space when endowed with the locally convex topology defined by
the family of norms $\left\Vert \cdot\right\Vert _{m}$.

We consider $A_{y}$ (resp. $A_{\tau})$ an algebra with mean value on
$\mathbb{R}_{y}^{N}$ (resp. $\mathbb{R}_{\tau})$, $F$ a Banach space and
$\mathrm{BUC}(\mathbb{R}^{N};F)$ the Banach space of bounded uniformly
continuous functions $u:\mathbb{R}^{N}\rightarrow F$. We will use in this work
some tools as:

\begin{itemize}
\item[(1)] The product algebra with mean value $A_{y}\odot A_{\tau}$ which is
the closure in $\mathrm{BUC}(\mathbb{R}^{N+1})$ of the tensor product
$A_{y}\otimes A_{\tau}=\{\sum_{\text{finite}}u_{i}\otimes v_{i\text{ }}%
;u_{i}\in A_{y}$ and $v_{i\text{ }}\in A_{\tau}\}$, which is an algebra with
mean value on $\mathbb{R}^{N+1}.$

\item[(2)] The vector-valued algebra with mean value $A\left(  \mathbb{R}%
^{N};F\right)  $, which is the closure of $A\otimes F$ in $\mathrm{BUC}%
(\mathbb{R}^{N};F)$, where $A\otimes F$ is the space of functions of the form
$A\otimes F=\sum_{\text{finite}}u_{i}\otimes e_{i\text{ }},u_{i}\in A,e_{i}\in
F,$ with $(u_{i}\otimes e)\left(  y\right)  =u_{i}\left(  y\right)  e_{i}$ for
$y\in\mathbb{R}^{N}$. It is an easy exercise to show that $A_{y}\odot A_{\tau
}=A_{y}(\mathbb{R}^{N},A_{\tau})=A_{\tau}(\mathbb{R},A_{y})$.
\end{itemize}

The Besicovitch space $B_{A}^{p}(\mathbb{R}^{N};F)$ is the completion of
$A(\mathbb{R}^{N};F)$ with respect to the seminorm
\[
\left\Vert u\right\Vert _{p,F}=\left(  \lim_{R\rightarrow\infty}\frac
{1}{\left\vert B_{R}\right\vert }\int_{B_{R}}\left\Vert u\left(  y\right)
\right\Vert _{F}^{p}dy\right)  ^{\frac{1}{p}}\equiv\left(  M\left(  \left\Vert
u\right\Vert _{F}^{p}\right)  \right)  ^{\frac{1}{p}}\text{ for }u\in
A(\mathbb{R}^{N};F),\text{ }1\leq p<\infty\text{,}%
\]
and is a complete seminormed subspace of $L_{loc}^{p}(\mathbb{R}^{N};F)$.

We can therefore define the space $\mathcal{B}_{A}^{p}(\mathbb{R}^{N};F)$ as
follows:
\[
\mathcal{B}_{A}^{p}(\mathbb{R}^{N};F)=B_{A}^{p}(\mathbb{R}^{N};F)/\mathcal{N}%
\text{ (where }\mathcal{N}=\{u\in B_{A}^{p}(\mathbb{R}^{N};F):\left\Vert
u\right\Vert _{p,F}=0\}\text{),}%
\]
which is a Banach space endow with the norm $\left\Vert u+\mathcal{N}%
\right\Vert _{p,F}=\left\Vert u\right\Vert _{p,F}$ ($u\in B_{A}^{p}%
(\mathbb{R}^{N};F)$).

It is to be noted that the mean value $M:A\otimes F\rightarrow F$ extends by
continuity to a continuous linear mapping (still denoted by $M$) on $B_{A}%
^{p}(\mathbb{R}^{N};F)$ satisfying:
\[
T\left(  M\left(  u\right)  \right)  =M\left(  T\left(  u\right)  \right)
\text{ for all }T\in F^{\prime}\text{ and }u\in B_{A}^{p}(\mathbb{R}^{N};F).
\]

For a Hilbert space $H$, the space $\mathcal{B}_{A}^{2}(\mathbb{R}^{N};H)$ is
also a Hilbert space with inner product%
\[
\left(  u,v\right)  _{2}=M\left[  \left(  u,v\right)  _{H}\right]  \text{ for
any }u,v\in\mathcal{B}_{A}^{2}(\mathbb{R}^{N};H)\text{,}%
\]
where $\left(  ,\right)  _{H}$ is the inner product in $H$.

The Sobolev-Besicovitch space $\mathcal{B}_{A}^{1,p}(\mathbb{R}^{N})$ and its
subspace $\mathcal{B}_{A}^{1,p}(\mathbb{R}^{N})/\mathbb{R}$ are defined as follows:

\begin{itemize}
\item $\mathcal{B}_{A}^{1,p}(\mathbb{R}^{N})=\mathcal{B}_{A}^{1,p}%
(\mathbb{R}^{N};\mathbb{R})=\{u\in\mathcal{B}_{A}^{p}(\mathbb{R}^{N}%
):\frac{\overline{\partial}u}{\partial y_{i}}\in\mathcal{B}_{A}^{p}%
(\mathbb{R}^{N}),$ $1\leq i\leq N\}$, where $\frac{\overline{\partial}%
u}{\partial y_{i}}=\frac{\partial v}{\partial y_{i}}+\mathcal{N}$ for
$u=v+\mathcal{N}$ with $v\in B_{A}^{p}(\mathbb{R}^{N})$. Under the norm
\[
\left\Vert u\right\Vert _{\mathcal{B}_{A}^{1,p}}=\left(  \left\Vert
u\right\Vert _{p}^{p}+\sum_{i=1}^{N}\left\Vert \frac{\overline{\partial}%
u}{\partial y_{i}}\right\Vert _{p}^{p}\right)  ^{\frac{1}{p}}\ \ (u\in
\mathcal{B}_{A}^{1,p}(\mathbb{R}^{N})),
\]
$\mathcal{B}_{A}^{1,p}(\mathbb{R}^{N})$ is a Banach space which admits
$\mathcal{D}_{A}(\mathbb{R}^{N})=\varrho(A^{\infty})$ as a dense subspace,
$\varrho$ being the canonical mapping of $B_{A}^{p}(\mathbb{R}^{N})$ into
$\mathcal{B}_{A}^{p}(\mathbb{R}^{N})$ defined by $\varrho\left(  u\right)
=u+\mathcal{N}$.

\item $\mathcal{B}_{A}^{1,p}(\mathbb{R}^{N})/\mathbb{R=}\{u\in\mathcal{B}%
_{A}^{1,p}(\mathbb{R}^{N})$ with $M(u)=0\}$. Equipped with the norm
\[
\left\Vert u\right\Vert _{\#,p}=\left\Vert \overline{\nabla}_{y}u\right\Vert
_{p}:=\left[  \sum_{i=1}^{N}\left\Vert \frac{\overline{\partial}u}{\partial
y_{i}}\right\Vert _{p}^{p}\right]  ^{1/p}\;\;(u\in\mathcal{B}_{A}%
^{1,p}(\mathbb{R}^{N})/\mathbb{R}),
\]
$\mathcal{B}_{A}^{1,p}(\mathbb{R}^{N})/\mathbb{R}$ is a normed space which is
in general not complete. We denote by $\mathcal{B}_{\#A}^{1,p}(\mathbb{R}%
^{N})$ the completion of $\mathcal{B}_{A}^{1,p}(\mathbb{R}^{N})/\mathbb{R}$
with respect to $\left\Vert \cdot\right\Vert _{\#,p}$.
\end{itemize}

With the Sobolev space defined, we proceed forward and recall some facts about
ergodic algebras. An algebra with mean value $A$ on $\mathbb{R}^{N}$ is
ergodic if any invariant function $f\in\mathcal{B}_{A}^{1}\left(
\mathbb{R}^{N}\right)  $ is constant in $\mathcal{B}_{A}^{1}\left(
\mathbb{R}^{N}\right)  $. From now on, we assume that all the algebras with
mean value considered here are ergodic. For $u\in B_{A}^{p}(\mathbb{R}^{N}),$
$v\in B_{A}^{p}(\mathbb{R}^{N})^{N}$, we define the gradient operator
$\overline{\nabla}_{y}u$ and the divergence operator $\overline
{\operatorname{div}}_{y}v$ by%

\[
\overline{\nabla}_{y}u:=\left(  \frac{\overline{\partial}u}{\partial y_{1}%
},...,\frac{\overline{\partial}u}{\partial y_{N}}\right)  \text{, }%
\overline{\operatorname{div}}_{y}v:=\sum_{i=1}^{N}\frac{\overline{\partial
}v_{i}}{\partial y_{i}}%
\]
The following properties are satisfied.

\begin{itemize}
\item[(1)] The divergence operator sends continuously and linearly
$B_{A}^{p^{\prime}}(\mathbb{R}^{N})^{N}$ into $B_{A}^{p}(\mathbb{R}%
^{N})^{\prime}$.

\item[(2)] We have
\begin{equation}
\left\langle \overline{\operatorname{div}}_{y}v,u\right\rangle =-\left\langle
v,\overline{\nabla}_{y}u\right\rangle \text{, for any }u\in\mathcal{B}%
_{A}^{1,p}\left(  \mathbb{R}^{N}\right)  \text{ and }v=\left(  v_{i}\right)
\in\mathcal{B}_{A}^{p^{\prime}}\left(  \mathbb{R}^{N}\right)  ^{N}\text{,}
\label{eqn5}%
\end{equation}
where
\[
\left\langle v,\overline{\nabla}_{y}u\right\rangle :=\sum_{i=1}^{N}M\left(
v_{i}\frac{\overline{\partial}u}{\partial y_{i}}\right)  \text{.}%
\]

\end{itemize}

We are now able to recall the definition and main properties of $\Sigma
$-converge method. To this end, let $(\Omega,\mathcal{F},\mathbb{P})$ be a
probability space with expectation $\mathbb{E}$. We denote by $F\left(
\Omega\right)  $ the Banach space of bounded functions $f:\Omega
\rightarrow\mathbb{R}$, and by $B\left(  \Omega\right)  $ the closure in
$F\left(  \Omega\right)  $ of the vector space $H\left(  \Omega\right)  $
consisting of all finite linear combinations of characteristic functions
$1_{X}$ of sets $X\in\mathcal{F}$. $B(\Omega)$ is the Banach space of bounded
$\mathcal{F}$-measurable functions. We also define the space $B\left(
\Omega,Z\right)  $ of bounded $\left(  \mathcal{F},\mathcal{B}(Z)\right)
$-measurable functions $f:\Omega\rightarrow Z$, where $Z$ is a Banach space
equipped with the Borelians $\mathcal{B}(Z)$. For a Banach space $F$, the
space of $F$-valued random variables $u$ satisfying $\left\Vert u\right\Vert
_{F}\in L^{p}(\Omega,\mathcal{F},\mathbb{P})$ will be denoted by $L^{p}%
(\Omega,\mathcal{F},\mathbb{P};F)$ (or merely $L^{p}(\Omega;F)$, if there is
no danger of confusion). In all what follows, random variables will be
considered on the probability space $\left(  \Omega,\mathcal{F},\mathbb{P}%
\right)  $. The letter $E$ will throughout denote any ordinary sequence
$(\varepsilon_{n})_{n\in\mathbb{N}}$ with $0<\varepsilon_{n}\leq1$ and
$\varepsilon_{n}\rightarrow0$ as $n\rightarrow\infty$.

For a function $u\in L^{p}\left(  \Omega;L^{p}\left(  Q_{T};\mathcal{B}%
_{A}^{p}\left(  \mathbb{R}^{N+1}\right)  \right)  \right)  $ we denote by
$u\left(  x,t,\cdot,\omega\right)  $ (for fixed $\left(  x,t,\omega\right)
\in Q_{T}\times\Omega$) the element of $\mathcal{B}_{A}^{p}(\mathbb{R}^{N+1})$
defined by%
\[
u\left(  x,t,\cdot,\omega\right)  \left(  y,\tau\right)  =u\left(
x,t,y,\tau,\omega\right)  \text{ for }\left(  y,\tau\right)  \in
\mathbb{R}^{N+1}\text{.}%
\]

\begin{definition}
\label{d2.1}\emph{A\ sequence of random variables }$\left(  u_{\varepsilon
}\right)  _{\varepsilon>0}\subset L^{p}(\Omega;L^{p}(Q_{T}))$ $\left(  1\leq
p<\infty\right)  $\emph{\ is said to:}

\begin{itemize}
\item[(i)] weakly $\Sigma$-converge\emph{\ in }$L^{p}(Q_{T}\times\Omega
)$\emph{\ to some random variable }$u_{0}\in L^{p}(\Omega;L^{p}(Q_{T}%
;\mathcal{B}_{A}^{p}(\mathbb{R}^{N+1})))$\emph{\ if as }$\varepsilon
\rightarrow0,$\emph{\ we have }%
\begin{align}
&  \iint_{Q_{T}\times\Omega}u_{\varepsilon}\left(  x,t,\omega\right)  f\left(
x,t,\frac{x}{\varepsilon},\frac{t}{\varepsilon},\omega\right)  dxdtd\mathbb{P}%
\nonumber\\
&  \mathbb{\rightarrow}\iint_{Q_{T}\times\Omega}M(u_{0}(x,t,\cdot
,\omega)f(x,t,\cdot,\omega))dxdtd\mathbb{P} \label{eqn6}%
\end{align}
\emph{for every }$f\in L^{p^{\prime}}(\Omega;L^{p^{\prime}}(Q_{T};A))$\emph{,
}$\frac{1}{p}+\frac{1}{p^{\prime}}=1$\emph{. We express this by writing
}$u_{\varepsilon}\rightarrow u_{0}$\emph{\ in }$L^{p}(Q_{T}\times\Omega
)$\emph{-weak }$\Sigma$\emph{;}

\item[(ii)] strongly $\Sigma$-converge\emph{\ in }$L^{p}(Q_{T}\times\Omega
)$\emph{\ to }$u_{0}\in L^{p}(\Omega;L^{p}(Q_{T};\mathcal{B}_{A}%
^{p}(\mathbb{R}^{N+1})))$\emph{\ if (\ref{eqn6}) holds and further }%
\begin{equation}
\left\Vert u_{\varepsilon}\right\Vert _{L^{p}\left(  Q_{T}\times\Omega\right)
}\rightarrow\left\Vert u_{0}\right\Vert _{L^{p}\left(  Q_{T}\times
\Omega;\mathcal{B}_{A}^{p}\left(  \mathbb{R}_{y,\tau}^{N+1}\right)  \right)
}. \label{eqn9}%
\end{equation}
\emph{We express this by writing }$u_{\varepsilon}\rightarrow u_{0}$\emph{\ in
}$L^{p}(Q_{T}\times\Omega)$\emph{-strong }$\Sigma$\emph{.}
\end{itemize}
\end{definition}

\begin{remark}
\label{r2.2}\emph{The convergence (\ref{eqn6}) still holds true for }$f\in
B(\Omega;\mathcal{C}(\overline{Q}_{T};B_{A}^{p^{\prime},\infty}\left(
\mathbb{R}_{y,\tau}^{N+1}\right)  ))$\emph{, where }$B_{A}^{p^{\prime},\infty
}\left(  \mathbb{R}_{y,\tau}^{N+1}\right)  =B_{A}^{p^{\prime}}\left(
\mathbb{R}_{y,\tau}^{N+1}\right)  \cap L^{\infty}\left(  \mathbb{R}_{y,\tau
}^{N+1}\right)  $\emph{, }$\frac{1}{p}+\frac{1}{p^{\prime}}=1$\emph{.}
\end{remark}

\begin{remark}
\label{r2.3}\emph{If in Definition \ref{d2.1}, we choose test functions
independent of }$\omega$\emph{ and we do not integrate over }$\Omega$\emph{,
we get the usual }$\Sigma$\emph{-convergence method, that is, }$u_{\varepsilon
}\rightarrow u_{0}$\emph{\ in }$L^{p}(Q_{T})$\emph{-weak }$\Sigma$\emph{ if }%
\begin{equation}
\int_{Q_{T}}u_{\varepsilon}\left(  x,t,\omega\right)  f\left(  x,t,\frac
{x}{\varepsilon},\frac{t}{\varepsilon}\right)  dxdt\mathbb{\rightarrow}%
\int_{Q_{T}}M(u_{0}(x,t,\cdot,\omega)f(x,t,\cdot))dxdt\text{ }\mathbb{P}%
\text{\emph{-a.s.}}\nonumber
\end{equation}
\emph{for every }$f\in L^{p^{\prime}}(Q_{T};A)$\emph{. We define the strong
}$\Sigma$\emph{-convergence method accordingly.}
\end{remark}

The main properties of the sigma-convergence for stochastic processes can be
found in \cite{13} (see also \cite{JMAA, SAA}). They read as follows.

\begin{itemize}
\item[(SC)$_{1}$] For $1<p<\infty$, any sequence of random variables which is
bounded in $L^{p}\left(  \Omega;L^{p}\left(  Q_{T}\right)  \right)  $
possesses a weakly $\Sigma$-convergent subsequence.

\item[(SC)$_{2}$] Let $1<p<\infty$. Let $\left(  u_{\varepsilon}\right)
_{\varepsilon\in E}\subset L^{p}(\Omega;L^{p}(0,T;W_{0}^{1,p}(Q)))$ be a
sequence of random variables which satisfies the following estimate%
\[
\sup_{\varepsilon\in E}\mathbb{E}\left\Vert u_{\varepsilon}\right\Vert
_{L^{p}\left(  0,T;W_{0}^{1,p}\left(  Q\right)  \right)  }^{p}<\infty.
\]
Then there exist a subsequence $E^{\prime}$ from $E$ and a couple of random
variables $\left(  u_{0},u_{1}\right)  $ with $u_{0}\in L^{p}(\Omega
;L^{p}(0,T;W_{0}^{1,p}(Q)))$ and $u_{1}\in L^{p}(\Omega;L^{p}(Q_{T}%
;\mathcal{B}_{A_{\tau}}^{p}(\mathbb{R}_{\tau};\mathcal{B}_{\#A_{y}}%
^{1,p}(\mathbb{R}_{y}^{N}))))$ such that as $E^{\prime}\ni\varepsilon
\rightarrow0$,
\[
u_{\varepsilon}\rightarrow u_{0}\text{ in }L^{p}(\Omega;L^{p}(0,T;W_{0}%
^{1,p}(Q)))\text{-weak}%
\]
and%
\begin{equation}
\frac{\partial u_{\varepsilon}}{\partial x_{i}}\rightarrow\frac{\partial
u_{0}}{\partial x_{i}}+\frac{\overline{\partial}u_{1}}{\partial y_{i}}\text{
in }L^{p}(Q_{T}\times\Omega)\text{-weak }\Sigma,\text{ }1\leq i\leq N.
\label{eqn7}%
\end{equation}

\item[(SC)$_{3}$] Assume that the hypotheses of (SC)$_{2}$ above are
satisfied. Finally suppose further that $p\geq2$ and that there exist a
subsequence $E^{\prime}$ from $E$ and a random variable $u_{0}\in L^{p}%
(\Omega;L^{p}(0,T;W_{0}^{1,p}(Q)))$ such that as $E^{\prime}\ni\varepsilon
\rightarrow0$
\begin{equation}
u_{\varepsilon}\rightarrow u_{0}\text{ in }L^{2}(Q_{T}\times\Omega
)\text{-strong.} \label{eqn8}%
\end{equation}
Then there exist a subsequence of $E^{\prime}$ (not relabeled) and a
$L^{p}(Q_{T};\mathcal{B}_{A_{\tau}}^{p}(\mathbb{R}_{\tau};\mathcal{B}%
_{\#A_{y}}^{1,p}(\mathbb{R}_{y}^{N})))$-valued stochastic process $u_{1}\in
L^{p}(\Omega,L^{p}(Q_{T};\mathcal{B}_{A_{\tau}}^{p}(\mathbb{R}_{\tau
};\mathcal{B}_{\#A_{y}}^{1,p}(\mathbb{R}_{y}^{N}))))$ such that (\ref{eqn7})
holds when $E^{\prime}\ni\varepsilon\rightarrow0$.

\item[(SC)$_{4}$] Let $1<p,q<\infty$ and $r\geq1$ be such that $\frac{1}%
{r}=\frac{1}{p}+\frac{1}{q}\leq1$. Assume that $\left(  u_{\varepsilon
}\right)  _{\varepsilon>0}\subset$ $L^{q}(Q_{T}\times\Omega)$ is weakly
$\Sigma$-convergent in $L^{q}(Q_{T}\times\Omega)$ to some $u_{0}\in
L^{q}(Q_{T}\times\Omega;\mathcal{B}_{A}^{q}(\mathbb{R}_{y,\tau}^{N+1}))$ and
$\left(  v_{\varepsilon}\right)  _{\varepsilon>0}\subset L^{p}(Q_{T}%
\times\Omega)$ is strongly $\Sigma$-convergent in $L^{p}(Q_{T}\times\Omega)$
to some $v_{0}\in L^{p}(Q_{T}\times\Omega;\mathcal{B}_{A}^{p}(\mathbb{R}%
_{y,\tau}^{N+1}))$. Then the sequence $\left(  u_{\varepsilon}v_{\varepsilon
}\right)  _{\varepsilon>0}$ is weakly $\Sigma$-convergent in $L^{r}%
(Q_{T}\times\Omega)$ to $u_{0}v_{0}$.
\end{itemize}

The above properties are the random counterpart of the same properties already
derived in \cite{Hom1} (see also \cite{EJDE04}) in the deterministic
setting.\medskip

This being so, it is a well-known fact that if $A=A_{y}\odot A_{\tau}$ is an
algebra with mean value on $\mathbb{R}^{N+1}$, then the natural choice of test
functions in the homogenization process is given by $\Phi_{\varepsilon
}(x,t,\omega)=\psi_{0}(x,t,\omega)+\varepsilon\psi_{1}(x,t,x/\varepsilon
,t/\varepsilon,\omega)$, $(x,t,\omega)\in Q_{T}\times\Omega$. However, due to
the following result, although the coefficients in our problem depend on the
fast time variable $\tau=t/\varepsilon$, the function $\psi_{1}$ will not
depend on $\tau=t/\varepsilon$. This is a consequence of the control of the
damping term.

\begin{lemma}
\label{l2.1}Let $\left(  u_{0},u_{1}\right)  $ be a couple of random variables
satisfying \emph{(\ref{eqn7})}, and $\left(  u_{\varepsilon}\right)
_{\varepsilon\in E}$ be a bounded sequence in $L^{2}\left(  \Omega
;L^{2}\left(  0,T;H_{0}^{1}(Q)\right)  \right)  $ such that
\[
\sup_{\varepsilon\in E}\mathbb{E}\left\Vert \nabla u_{\varepsilon}^{\prime
}\right\Vert _{L^{2}(Q_{T})}^{2}<\infty.
\]
Then we have $u_{1}\in L^{2}(\Omega;L^{2}(Q_{T};\mathcal{B}_{\#A_{y}}%
^{1,2}(\mathbb{R}_{y}^{N})))+L^{2}(\Omega;L^{2}(Q_{T};\mathcal{B}_{A_{\tau}%
}^{2}(\mathbb{R}_{\tau})/\mathbb{R}))$.
\end{lemma}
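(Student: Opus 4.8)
The plan is to analyze the structure of the corrector $u_1$ obtained in (SC)$_2$ and to show that, under the extra control on $\nabla u_\varepsilon'$, the part of $u_1$ that genuinely oscillates in the fast time variable $\tau$ must be a function of $\tau$ alone (i.e.\ has no $y$-gradient), so that $u_1$ decomposes as claimed. First I would recall that by (SC)$_2$ we have $u_1 \in L^2(\Omega;L^2(Q_T;\mathcal{B}_{A_\tau}^2(\mathbb{R}_\tau;\mathcal{B}_{\#A_y}^{1,2}(\mathbb{R}_y^N))))$ with
\[
\frac{\partial u_\varepsilon}{\partial x_i} \rightarrow \frac{\partial u_0}{\partial x_i} + \frac{\overline{\partial} u_1}{\partial y_i} \quad \text{in } L^2(Q_T\times\Omega)\text{-weak }\Sigma.
\]
The idea is to differentiate this relation formally in $t$ and use that $u_\varepsilon' = \partial u_\varepsilon/\partial t$ picks up, from the $\varepsilon$-scale oscillation $\psi_1(x,t,x/\varepsilon,t/\varepsilon)$, a factor $\varepsilon^{-1}\partial_\tau$. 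The hypothesis $\sup_{\varepsilon}\mathbb{E}\|\nabla u_\varepsilon'\|_{L^2(Q_T)}^2<\infty$ says that $\nabla u_\varepsilon'$ stays bounded in $L^2(Q_T\times\Omega)$; hence, after extracting a further subsequence, $\nabla u_\varepsilon'$ weakly $\Sigma$-converges to some limit, and in particular it cannot contain a term of order $\varepsilon^{-1}$. This forces the coefficient of $\varepsilon^{-1}$, namely $\overline{\nabla}_y \partial_\tau u_1$ (in the appropriate Besicovitch sense), to vanish.

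Concretely, the key steps, in order, are: (1) fix a smooth test function $\Phi_\varepsilon(x,t,\omega)=\varepsilon\,\psi_1(x,t,x/\varepsilon,t/\varepsilon,\omega)$ with $\psi_1\in \mathcal{C}_0^\infty(Q_T)\otimes(A_y^\infty\otimes A_\tau^\infty)$ (or the relevant dense class), and integrate $\nabla u_\varepsilon' \cdot \nabla\Phi_\varepsilon$ over $Q_T\times\Omega$; (2) integrate by parts in $t$ to move the time derivative off $u_\varepsilon'$, producing a term involving $\nabla u_\varepsilon \cdot \partial_t(\nabla\Phi_\varepsilon)$, and note $\partial_t(\nabla_x\Phi_\varepsilon)$ contains a dominant term $\overline{\nabla}_y\partial_\tau\psi_1(x,t,x/\varepsilon,t/\varepsilon,\omega)$ of order $\varepsilon^0$ together with lower-order $O(\varepsilon)$ terms; (3) pass to the limit using (SC)$_2$ for $\nabla u_\varepsilon$ (which converges to $\nabla_x u_0 + \overline{\nabla}_y u_1$) on the right, and the boundedness of $\nabla u_\varepsilon'$ on the left, which shows the left side is $O(1)$ while, were $\overline{\nabla}_y\partial_\tau u_1\neq 0$, the identity would be inconsistent — more precisely, testing shows
\[
\iint_{Q_T\times\Omega} M\!\left(\big(\nabla_x u_0 + \overline{\nabla}_y u_1\big)\cdot \overline{\nabla}_y \partial_\tau \psi_1\right) dx\,dt\,d\mathbb{P} = 0
\]
for all such $\psi_1$; (4) since $\nabla_x u_0$ has mean value zero in $y$ against any $y$-gradient, and by density and the integration-by-parts formula (\ref{eqn5}) in the $y$-variables, this reduces to $M\big(\overline{\operatorname{div}}_y \overline{\nabla}_y \partial_\tau u_1 \cdot (\text{test})\big)=0$, hence $\overline{\nabla}_y \partial_\tau u_1 = 0$ in $L^2(\Omega;L^2(Q_T;\mathcal{B}_{A_\tau}^2(\mathbb{R}_\tau;\mathcal{B}_{A_y}^2(\mathbb{R}_y^N)^N)))$; (5) conclude from $\overline{\nabla}_y\partial_\tau u_1=0$ and ergodicity of $A_y$ that $\partial_\tau u_1$ is, for a.e.\ $(x,t,\omega)$, independent of $y$ in $\mathcal{B}_{A_y}^{1,2}$; writing $u_1 = v_1 + w_1$ where $w_1(x,t,y,\tau,\omega)$ is the part with $\partial_\tau$-dependence only through a $y$-independent function, one gets $w_1 \in L^2(\Omega;L^2(Q_T;\mathcal{B}_{A_\tau}^2(\mathbb{R}_\tau)/\mathbb{R}))$ (the quotient by constants because $M(u_1)=0$ can be arranged) and $v_1$ is $\tau$-independent, i.e.\ $v_1 \in L^2(\Omega;L^2(Q_T;\mathcal{B}_{\#A_y}^{1,2}(\mathbb{R}_y^N)))$.

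I expect the main obstacle to be step (2)–(3): justifying the integration by parts in $t$ and the limit passage rigorously when $u_\varepsilon$ is only a stochastic process with the regularity stated (continuity in time in $H_0^1$ for $u_\varepsilon$, in $L^2$ and $L^2(0,T;H_0^1)$ for $u_\varepsilon'$), and keeping careful track of which oscillatory terms are $O(1)$ versus $o(1)$ after the $\varepsilon$-rescaling. One must also be slightly careful with boundary terms at $t=0,T$, which is why one chooses $\psi_1$ compactly supported in $(0,T)$. A secondary technical point is that $\mathcal{B}_{A_y}^{1,2}/\mathbb{R}$ is not complete, so the decomposition must be stated in the completion $\mathcal{B}_{\#A_y}^{1,2}$; the argument via $\overline{\nabla}_y\partial_\tau u_1 = 0$ is robust to this because it is phrased entirely in terms of the gradient, which does extend to the completion. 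Everything else — density of the test class, the duality (\ref{eqn5}), and the ergodicity argument that a function with vanishing $y$-gradient is $y$-constant — is available from the material recalled earlier in Section 2.
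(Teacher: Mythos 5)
Your proposal is essentially the paper's argument: test against an $\varepsilon$-scaled oscillating function, integrate by parts in $t$, use the bound on $\nabla u_{\varepsilon}^{\prime}$ to kill the terms carrying an explicit factor $\varepsilon$, deduce that $\overline{\nabla}_{y}u_{1}$ does not depend on $\tau$, and then decompose $u_{1}=M_{\tau}(u_{1})+(u_{1}-M_{\tau}(u_{1}))$ using the ergodicity of $A_{y}$. Two remarks on the execution. First, your bookkeeping in step (2) is off: the dominant term of $\partial_{t}\nabla\Phi_{\varepsilon}$ is $\varepsilon^{-1}(\overline{\nabla}_{y}\partial_{\tau}\psi_{1})^{\varepsilon}$, of order $\varepsilon^{-1}$ and not $\varepsilon^{0}$; this is not a cosmetic point, since it is precisely the singular prefactor that forces $\int\nabla u_{\varepsilon}\cdot(\overline{\nabla}_{y}\partial_{\tau}\psi_{1})^{\varepsilon}\,dx\,dt\,d\mathbb{P}=O(\varepsilon)\rightarrow0$ and hence the vanishing of its weak $\Sigma$-limit --- if that term really were $O(1)$ the identity would yield no information. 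Second, because you pair $\nabla u_{\varepsilon}$ with the full gradient $\nabla\Phi_{\varepsilon}$, your orthogonality relation only tests $\overline{\nabla}_{y}u_{1}$ against $y$-gradients, so the step from $\overline{\operatorname{div}}_{y}(\overline{\nabla}_{y}\partial_{\tau}u_{1})=0$ to $\overline{\nabla}_{y}\partial_{\tau}u_{1}=0$ is not automatic: it requires the extra observation that $\overline{\nabla}_{y}\partial_{\tau}u_{1}$ itself lies in the closed subspace of $y$-gradients and is orthogonal to that subspace (the same fact that gives uniqueness for the cell problem), and it also presupposes that $\partial_{\tau}u_{1}$ makes sense. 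The paper avoids both issues by testing the single component $\partial u_{\varepsilon}/\partial x_{i}$ against an arbitrary product $\varphi(x,t)\psi(x/\varepsilon)\phi(t/\varepsilon)\eta(\omega)$ with $\psi\in A_{y}^{\infty}$ free (not a gradient), which yields $M_{\tau}\bigl((\partial u_{0}/\partial x_{i}+\overline{\partial}u_{1}/\partial y_{i})\,d\phi/d\tau\bigr)=0$ and hence the $\tau$-independence of each $\overline{\partial}u_{1}/\partial y_{i}$ directly, without ever differentiating $u_{1}$ in $\tau$. With these two points repaired your argument closes, and the final decomposition coincides with the paper's.
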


\begin{proof}
For $\varphi\in\mathcal{C}_{0}^{\infty}(Q_{T})$, $\psi\in A_{y}^{\infty}$,
$\phi\in A_{\tau}^{\infty}$, $\eta\in B\left(  \Omega\right)  $ and $1\leq
i\leq N$, one has
\begin{align}
&  \varepsilon\int_{Q_{T}\times\Omega}\frac{\partial^{2}u_{\varepsilon}%
}{\partial t\partial x_{i}}(x,t,\omega)\varphi(x,t)\psi(\frac{x}{\varepsilon
})\phi(\frac{t}{\varepsilon})\eta\left(  \omega\right)  dxdtd\mathbb{P}%
\label{eqnb}\\
&  +\varepsilon\int_{Q_{T}\times\Omega}\frac{\partial u_{\varepsilon}%
}{\partial x_{i}}(x,t,\omega)\frac{\partial\varphi}{\partial t}(x,t)\psi
(\frac{x}{\varepsilon})\phi(\frac{t}{\varepsilon})\eta\left(  \omega\right)
dxdtd\mathbb{P}\nonumber\\
&  =-\int_{Q_{T}\times\Omega}\frac{\partial u_{\varepsilon}}{\partial x_{i}%
}(x,t,\omega)\varphi(x,t)\psi(\frac{x}{\varepsilon})\frac{d\phi}{d\tau}%
(\frac{t}{\varepsilon})\eta\left(  \omega\right)  dxdtd\mathbb{P}.\nonumber
\end{align}
Since $(\nabla u_{\varepsilon})_{\varepsilon\in E}$ and $(\nabla
u_{\varepsilon}^{\prime})_{\varepsilon\in E}$ are bounded in $L^{2}%
(Q_{T}\times\Omega)$, passing to the limit in (\ref{eqnb}), as $E^{\prime}%
\ni\varepsilon\rightarrow0$ (the subsequence in (SC)$_{2}$ above), the left
hand-side of the above equality goes to zero, so using the arbitrariness of
$\varphi$ and $\psi$, we have
\[
M_{\tau}\left(  \left(  \frac{\partial u_{0}}{\partial x_{i}}+\frac
{\overline{\partial}u_{1}}{\partial y_{i}}\right)  \frac{d\phi}{d\tau}\right)
=0\text{ in }Q_{T}\times\Omega\times\mathbb{R}_{y}^{N}\text{ for all }\phi\in
A_{\tau}^{\infty}\text{ }(1\leq i\leq N).
\]
This implies that $\frac{\overline{\partial}u_{1}}{\partial y_{i}}$ does not
depend on $\tau$, hence $\frac{\overline{\partial}u_{1}}{\partial y_{i}}\in
L^{2}\left(  \Omega;L^{2}\left(  Q_{T};\mathcal{B}_{A_{y}}^{1,2}\left(
\mathbb{R}_{y}^{N}\right)  \right)  \right)  $. Letting $v_{1}=M_{\tau}\left(
u_{1}\right)  \in L^{2}(\Omega;L^{2}(Q_{T};\mathcal{B}_{\#A_{y}}%
^{1,2}(\mathbb{R}_{y}^{N})))$ we have $\overline{\nabla}_{y}v_{1}=M_{\tau
}\left(  \overline{\nabla}_{y}u_{1}\right)  =\overline{\nabla}_{y}u_{1}$, so
that $\overline{\nabla}_{y}\left(  u_{1}-v_{1}\right)  =0$. Since the algebra
$A_{y}$ is ergodic, we obtain $u_{1}-v_{1}=\chi\in L^{2}\left(  \Omega
;L^{2}\left(  Q_{T};\mathcal{B}_{A_{\tau}}^{2}(\mathbb{R})/\mathbb{R}\right)
\right)  $, thus, $u_{1}=v_{1}+\chi\in L^{2}(\Omega;L^{2}(Q_{T};\mathcal{B}%
_{\#A_{y}}^{1,2}(\mathbb{R}_{y}^{N})))+L^{2}\left(  \Omega;L^{2}\left(
Q_{T},\mathcal{B}_{A_{\tau}}^{2}(\mathbb{R}_{\tau})/\mathbb{R}\right)
\right)  $.
\end{proof}

\begin{remark}
\label{r2.4}\emph{Lemma \ref{l2.1} allows us to choose the function }$u_{1}%
$\emph{\ in }$L^{2}(\Omega;L^{2}(Q_{T};\mathcal{B}_{\#A_{y}}^{1,2}%
(\mathbb{R}_{y}^{N})))$\emph{. Indeed, as seen in the proof of that lemma, we
may replace }$u_{1}$\emph{\ by }$v_{1}=M_{\tau}(u_{1})$\emph{\ so that
}$\overline{\nabla}_{y}v_{1}=\overline{\nabla}_{y}u_{1}$\emph{. As a result,
we will choose the test functions (in the homogenization process) that are
independent of }$\tau$\emph{.}
\end{remark}

\section{Homogenization results}

\subsection{Passage to the limit}

The passage to the limit will be made under the assumption (\textbf{A3}) below
characterizing essentially the distribution of the microstructures in the
medium $Q$ and the behavior on the fast time scale. It reads as follows:

\begin{itemize}
\item[\textbf{(A3)}] $a_{ij}(x,\cdot)\in B_{A_{y}}^{2}(\mathbb{R}_{y}^{N})$
for every $x\in\overline{Q}$, where $A_{y}$ is an algebra with mean value on
$\mathbb{R}^{N}$, for any $v\in\mathbb{R}$ the functions $(y,\tau)\mapsto
f(y,\tau,v)$ and $(y,\tau)\mapsto g_{k}(y,\tau,v)$ belong to $B_{A}%
^{2}(\mathbb{R}_{y,\tau}^{N+1})$ where $A=A_{y}\odot A_{\tau}$ is a product
algebra with mean value defined as a closure in the sup norm in $\mathbb{R}%
^{N+1}$ of the tensor product $A_{y}\otimes A_{\tau}$ and $g=(g_{k})_{1\leq
k\leq m}$.
\end{itemize}

Starting with the sequence of probability measures $\left(  \pi^{\varepsilon
}\right)  $ defined by (\ref{eqqq}),\emph{\ }since it is tight, the
Prokhorov's theorem \cite{12} yields the existence of a subsequence $\left(
\pi^{\varepsilon_{n}}\right)  $ of $\left(  \pi^{\varepsilon}\right)  $ that
weakly converges to a probability measure $\pi$ in $S$. Moreover, by
Skorokhod's theorem \cite{1}, there exist a probability space $\left(
\overline{\Omega},\overline{\mathcal{F}},\overline{\mathbb{P}}\right)  $, and
random variables $\left(  W_{\varepsilon_{n}},u_{\varepsilon_{n}%
},u_{\varepsilon_{n}}^{\prime}\right)  $ and $\left(  \overline{W},u_{0}%
,u_{0}^{\prime}\right)  $ with values in $S$ such that:

\begin{itemize}
\item[(\textbf{C1})] $\pi^{\varepsilon_{n}}$ and $\pi$ are respectively the
probability law of $(W_{\varepsilon_{n}},u_{\varepsilon_{n}},u_{\varepsilon
_{n}}^{\prime})$ and $(\overline{W},u_{0},u_{0}^{\prime})$,

\item[(\textbf{C2})] $u_{\varepsilon_{n}}\rightarrow u_{0}$ in $L^{2}%
(0,T;L^{2}(Q))\cap\mathcal{C}(0,T;H^{-1}(Q))$ $\overline{\mathbb{P}}$-a.s.,

\item[(\textbf{C3})] $u_{\varepsilon_{n}}^{\prime}\rightarrow u_{0}^{\prime}$
in $\mathcal{C}(0,T;L^{2}(Q))$ $\overline{\mathbb{P}}$-a.s.,

\item[(\textbf{C4})] $W_{\varepsilon_{n}}\rightarrow\overline{W}$ in
$\mathcal{C}(0,T;\mathbb{R}^{m})$ $\overline{\mathbb{P}}$-a.s.
\end{itemize}

With this in mind, we deduce that $\{W_{\varepsilon_{n}}\}$ is a sequence
of\ $m$-dimensional Wiener process. Now, set
\[
\overline{\mathcal{F}}_{t}=\sigma\{\overline{W}(s),u_{0}(s),u_{0}^{\prime
}(s)\}_{s\in\lbrack0,t]}.
\]
Then arguing as in \cite{10}, we may prove that $\overline{W}\left(  t\right)
$ is a $\overline{\mathcal{F}}_{t}$-standard Wiener process. By the same way
of proceeding as in \cite{2'}, we can show that $u_{\varepsilon_{n}}$
(obtained above) satisfies
\begin{align}
&  \left(  u_{\varepsilon_{n}}^{\prime}(t),\phi\right)  +\int_{0}^{t}\left[
\left(  P^{\varepsilon_{n}}u_{\varepsilon_{n}}\left(  \tau\right)
,\phi\right)  -\left(  \nabla u_{\varepsilon_{n}}^{\prime}\left(  \tau\right)
,\nabla\phi\right)  \right]  d\tau\label{eqn32}\\
&  =\left(  u^{1},\phi\right)  +\int_{0}^{t}\left(  f\left(  \frac
{x}{\varepsilon_{n}},\frac{\tau}{\varepsilon_{n}},u_{\varepsilon_{n}}^{\prime
}\right)  ,\phi\right)  d\tau+\left(  \int_{0}^{t}g\left(  \frac
{x}{\varepsilon_{n}},\frac{\tau}{\varepsilon_{n}},u_{\varepsilon_{n}}^{\prime
}\right)  dW_{\varepsilon_{n}}\left(  \tau\right)  ,\phi\right) \nonumber
\end{align}
for any $\phi\in H_{0}^{1}(Q)$ and for almost all $(\overline{\omega}%
,t)\in\overline{\Omega}\times\lbrack0,T]$.

Since $u_{\varepsilon_{n}}$ satisfies (\ref{eqn32}), then we obtain from the
application of Prokhorov's and Skhorokhod's compactness results that
$u_{\varepsilon_{n}}$ and $u_{\varepsilon_{n}}^{\prime}$ satisfy the a priori
estimates
\begin{equation}
\overline{\mathbb{E}}\sup_{0\leq t\leq T}\left\Vert u_{\varepsilon_{n}}\left(
t\right)  \right\Vert _{H_{0}^{1}(Q)}^{4}\leq C\text{, }\overline{\mathbb{E}%
}\sup_{0\leq t\leq T}\left\Vert u_{\varepsilon_{n}}^{\prime}\left(  t\right)
\right\Vert _{L^{2}(Q)}^{4}\leq C\text{,} \label{eqn40}%
\end{equation}%
\begin{equation}
\overline{\mathbb{E}}\sup_{\left\vert \theta\right\vert \leq\delta}\int
_{0}^{T}\left\Vert u_{\varepsilon_{n}}^{\prime}(t+\theta)-u_{\varepsilon_{n}%
}^{\prime}(t)\right\Vert _{H^{-1}(Q)}^{2}dt\leq C\delta\text{.} \label{eqn40'}%
\end{equation}
We infer from these estimates that
\begin{align*}
u_{\varepsilon_{n}}  &  \rightarrow u_{0}\text{ in }L^{2}\left(
\overline{\Omega};L^{\infty}\left(  0,T;H_{0}^{1}(Q)\right)  \right)
\text{-weak}\ast\text{,}\\
u_{\varepsilon_{n}}  &  \rightarrow u_{0}\text{ in }L^{2}\left(
\overline{\Omega};L^{\infty}\left(  0,T;L^{2}(Q)\right)  \right)
\text{-weak}\ast\text{,}\\
u_{\varepsilon_{n}}^{\prime}  &  \rightarrow u_{0}^{\prime}=\frac{\partial
u_{0}}{\partial t}\text{ in }L^{2}\left(  \overline{\Omega};L^{\infty}\left(
0,T;L^{2}(Q)\right)  \right)  \text{-weak}\ast\text{.}%
\end{align*}
The following convergence results are therefore consequences of the relations
(\ref{eqn40}), (\ref{eqn40'}), Vitali's theorem
\begin{align}
u_{\varepsilon_{n}}  &  \rightarrow u_{0}\text{ in }L^{2}(\overline{\Omega
};L^{\infty}(0,T;L^{2}(Q)))\text{-strong,}\label{eqn41}\\
u_{\varepsilon_{n}}^{\prime}  &  \rightarrow u_{0}^{\prime}\text{ in }%
L^{2}(\overline{\Omega};L^{\infty}(0,T;L^{2}(Q)))\text{-strong.}\nonumber
\end{align}
Hence for almost all $(\omega,t)\in\overline{\Omega}\times\lbrack0,T]$, we
obtain
\begin{align}
u_{\varepsilon_{n}}  &  \rightarrow u_{0}\text{ \ in }L^{2}(Q)\text{-strong,}%
\label{eqn42}\\
u_{\varepsilon_{n}}^{\prime}  &  \rightarrow u_{0}^{\prime}\text{ \ in }%
L^{2}(Q)\text{-strong,}\nonumber
\end{align}
with respect to the measure $d\overline{\mathbb{P}}\otimes dt$.

\begin{lemma}
\label{l4.1}Let $(u_{\varepsilon})_{\varepsilon}$ be a sequence in
$L^{2}(Q_{T}\times\Omega)$ such that $u_{\varepsilon}^{\prime}\rightarrow
u_{0}^{\prime}$ in $L^{2}(Q_{T}\times\Omega)$ as $\varepsilon\rightarrow0$
where $u_{0}^{\prime}\in L^{2}(Q_{T}\times\Omega)$. Then, as $\varepsilon
\rightarrow0$, the following holds true:
\begin{equation}
f^{\varepsilon}(\cdot,\cdot,u_{\varepsilon}^{\prime})\rightarrow f(\cdot
,\cdot,u_{0}^{\prime})\text{ in }L^{2}(Q_{T})\text{-strong }\Sigma\text{
\ }\mathbb{P}\text{-a.s.} \label{eqn42'}%
\end{equation}
and, for each $1\leq k\leq m$,
\begin{equation}
g_{k}^{\varepsilon}(\cdot,\cdot,u_{\varepsilon}^{\prime})\rightarrow
g_{k}(\cdot,\cdot,u_{0}^{\prime})\text{ in }L^{2}(Q_{T}\times\Omega
)\text{-strong }\Sigma\text{.} \label{eqn42''}%
\end{equation}

\end{lemma}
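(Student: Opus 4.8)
The plan is to establish \eqref{eqn42'} and \eqref{eqn42''} by combining the pointwise (in the $(x,t)$ and, for $g$, $(x,t,\omega)$ variables) convergence of the arguments $u_\varepsilon'$ with the Lipschitz and growth conditions (A2)(1)(ii)--(iii) and (A2)(2)(ii)--(iii), and then invoking the characterization of strong $\Sigma$-convergence. The key observation is that the map $v\mapsto f(\cdot,\cdot,v)$ (resp.\ $g_k(\cdot,\cdot,v)$), viewed as taking values in $B_A^2(\mathbb{R}^{N+1})$, is Lipschitz \emph{uniformly} in $(y,\tau)$, so substituting a convergent sequence of real-valued functions produces a convergent sequence in the Besicovitch space. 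First I would recall the elementary fact that strong $\Sigma$-convergence of a sequence $h^\varepsilon(\cdot,\cdot)=h(\cdot,\cdot,x/\varepsilon,t/\varepsilon)$ to $h\in L^2(Q_T;B_A^2(\mathbb{R}^{N+1}))$ holds whenever $h(x,t,\cdot,\cdot)\in B_A^{2,\infty}$ with the appropriate continuity in $(x,t)$ (this is the standard ``admissibility'' of such functions, used throughout sigma-convergence theory and implicit in Remark~\ref{r2.2}); the strong part \eqref{eqn9} then follows from the norm identity $\|h^\varepsilon\|_{L^2(Q_T)}^2\to\|h\|_{L^2(Q_T;B_A^2)}^2$, which is itself a mean-value computation.

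For \eqref{eqn42'}, the plan is: since $u_\varepsilon'\to u_0'$ in $L^2(Q_T\times\Omega)$, after passing to a subsequence we have $u_\varepsilon'(x,t,\omega)\to u_0'(x,t,\omega)$ for a.e.\ $(x,t)$ and $\mathbb{P}$-a.s.; fix such an $\omega$. Write
\begin{align*}
\| f^\varepsilon(\cdot,\cdot,u_\varepsilon') - f(\cdot,\cdot,u_0') \|_{L^2(Q_T)}
&\le \| f^\varepsilon(\cdot,\cdot,u_\varepsilon') - f^\varepsilon(\cdot,\cdot,u_0') \|_{L^2(Q_T)}
 + \| f^\varepsilon(\cdot,\cdot,u_0') - f(\cdot,\cdot,u_0') \|_{L^2(Q_T)}.
\end{align*}
The first term is controlled by $c_2\|u_\varepsilon'-u_0'\|_{L^2(Q_T)}\to 0$ using (A2)(1)(iii) pointwise in $(y,\tau)$. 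The second term tends to $0$ because $f(\cdot,\cdot,u_0'(\cdot,\cdot,\omega))$, as an element of $L^2(Q_T;B_A^2(\mathbb{R}^{N+1}))$ built from $u_0'(\cdot,\cdot,\omega)\in L^2(Q_T)$ via assumption (A3) (which places $f(y,\tau,v)\in B_A^2$ for each fixed $v$, and the Lipschitz bound transfers this to $v$ replaced by an $L^2$ function after a density/approximation step over simple functions in $v$), is admissible, so $f^\varepsilon(\cdot,\cdot,u_0')\to f(\cdot,\cdot,u_0')$ strongly in $L^2(Q_T)$-$\Sigma$, in particular the $L^2(Q_T)$-norms converge to the Besicovitch norm; a parallel triangle-inequality argument then upgrades the full sequence $f^\varepsilon(\cdot,\cdot,u_\varepsilon')$ to strong $\Sigma$-convergence, i.e.\ \eqref{eqn6} holds for test functions $\varphi\in L^2(Q_T;A)$ together with the norm convergence \eqref{eqn9}. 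Since the limit is $\mathbb{P}$-a.s.\ independent of the chosen subsequence, \eqref{eqn42'} follows for the whole sequence.

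For \eqref{eqn42''} the argument is identical, working now in $L^2(Q_T\times\Omega)$ rather than $\mathbb{P}$-a.s.\ in $L^2(Q_T)$: the bound $\|g_k^\varepsilon(\cdot,\cdot,u_\varepsilon')-g_k^\varepsilon(\cdot,\cdot,u_0')\|_{L^2(Q_T\times\Omega)}\le c_4\|u_\varepsilon'-u_0'\|_{L^2(Q_T\times\Omega)}\to 0$ by (A2)(2)(iii), and $g_k^\varepsilon(\cdot,\cdot,u_0')\to g_k(\cdot,\cdot,u_0')$ strongly $\Sigma$ in $L^2(Q_T\times\Omega)$ by admissibility together with (A3). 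The main obstacle — and the point deserving the most care — is justifying that substituting an $L^2$ (rather than continuous or simple) function $u_0'$ into $f$ and $g_k$ genuinely yields an element of $L^2(Q_T;B_A^2(\mathbb{R}^{N+1}))$ to which the admissibility/strong-$\Sigma$ machinery applies; this is handled by approximating $u_0'$ in $L^2$ by simple functions $v=\sum c_j \mathbf{1}_{E_j}$ (for which $f(\cdot,\cdot,v)=\sum \mathbf{1}_{E_j}\, f(\cdot,\cdot,c_j)$ clearly lies in the right space by (A3)) and passing to the limit using the Lipschitz estimates (A2)(1)(iii), (A2)(2)(iii), which give uniform continuity of $v\mapsto f(\cdot,\cdot,v)$ and $v\mapsto g_k(\cdot,\cdot,v)$ from $L^2(Q_T\times\Omega)$ into $L^2(Q_T\times\Omega;B_A^2)$. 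Once this substitution is legitimate, everything else is the routine triangle-inequality bookkeeping sketched above.
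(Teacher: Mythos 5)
Your proposal is correct and follows essentially the same route as the paper's proof: reduce to the fixed argument $u_{0}^{\prime}$ via the Lipschitz conditions, establish the weak $\Sigma$-convergence and the norm convergence of $f^{\varepsilon}(\cdot,\cdot,u_{0}^{\prime})$ and $g_{k}^{\varepsilon}(\cdot,\cdot,u_{0}^{\prime})$ by approximating $u_{0}^{\prime}$ by elements for which $(x,t,y,\tau,\omega)\mapsto g_{k}(y,\tau,u(x,t,\omega))$ is admissible, and then combine (the paper approximates by $B(\Omega;\mathcal{C}(\overline{Q}_{T}))$ functions and expands the square as $J_{1}+J_{2}+J_{3}$, whereas you use simple functions and the reverse triangle inequality, a purely cosmetic difference). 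Only be careful with the shorthand $\left\Vert f^{\varepsilon}(\cdot,\cdot,u_{0}^{\prime})-f(\cdot,\cdot,u_{0}^{\prime})\right\Vert _{L^{2}(Q_{T})}$, which should be read as the distance to the trace of the limit, and note that what you need from the subsequence extraction is $\left\Vert u_{\varepsilon}^{\prime}-u_{0}^{\prime}\right\Vert _{L^{2}(Q_{T})}\rightarrow0$ $\mathbb{P}$-a.s.\ rather than pointwise a.e.\ convergence in $(x,t)$.
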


\begin{proof}
Let us first prove (\ref{eqn42''}). It amounts in checking the following:

\begin{itemize}
\item[(i)] $g_{k}^{\varepsilon}(\cdot,\cdot,u_{\varepsilon}^{\prime
})\rightarrow g_{k}(\cdot,\cdot,u_{0}^{\prime})$ in $L^{2}(Q_{T}\times\Omega
)$-weak $\Sigma$

\item[(ii)] $\left\Vert g_{k}^{\varepsilon}(\cdot,\cdot,u_{\varepsilon
}^{\prime})\right\Vert _{L^{2}(Q_{T}\times\Omega)}\rightarrow\left\Vert
g_{k}(\cdot,\cdot,u_{\varepsilon}^{\prime})\right\Vert _{L^{2}(Q_{T}%
\times\Omega;\mathcal{B}_{A}^{2}(\mathbb{R}_{y,\tau}^{N+1}))}$.
\end{itemize}

We first consider (i). Let $u\in B(\Omega;\mathcal{C}\left(  \overline{Q}%
_{T}\right)  )$; then the function $(x,t,y,\tau,\omega)\mapsto g_{k}%
(y,\tau,u(x,t,\omega))$ lies in $B(\Omega;\mathcal{C}(\overline{Q}_{T}%
;B_{A}^{2,\infty}(\mathbb{R}_{y,\tau}^{N+1})))$, so that we have
$g_{k}^{\varepsilon}(\cdot,\cdot,u)\rightarrow g_{k}(\cdot,\cdot,u)$ in
$L^{2}(Q_{T}\times\Omega)$-weak $\Sigma$ as $\varepsilon\rightarrow0$. Using
the density of $B(\Omega;\mathcal{C}\left(  \overline{Q}_{T}\right)  )$ in
$L^{2}(Q_{T}\times\Omega)$ (which allows us to approximate $u_{0}^{\prime}$ in
$L^{2}(Q_{T}\times\Omega)$ by a strongly convergent sequence extracted from
$B(\Omega;\mathcal{C}\left(  \overline{Q}_{T}\right)  )$), we obtain the
following result:
\begin{equation}
g_{k}^{\varepsilon}(\cdot,\cdot,u_{0}^{\prime})\rightarrow g_{k}(\cdot
,\cdot,u_{0}^{\prime})\text{ in }L^{2}(Q_{T}\times\Omega)\text{-weak }%
\Sigma\text{ as }\varepsilon\rightarrow0\text{.} \label{eqnq}%
\end{equation}
Next, for $h\in L^{2}(\Omega;L^{2}(Q_{T};A))$, we have
\begin{align*}
&  \int_{Q_{T}\times\overline{\Omega}}g_{k}^{\varepsilon}(\cdot,\cdot
,u_{\varepsilon}^{\prime})h^{\varepsilon}dxdtd\mathbb{P}-\int_{Q_{T}%
\times\overline{\Omega}}M(g_{k}(\cdot,\cdot,u_{0}^{\prime})h)dxdtd\mathbb{P}\\
&  =\int_{Q_{T}\times\overline{\Omega}}(g_{k}^{\varepsilon}(\cdot
,\cdot,u_{\varepsilon}^{\prime})-g_{k}^{\varepsilon}(\cdot,\cdot,u_{0}%
^{\prime}))h^{\varepsilon}dxdtd\mathbb{P+}\int_{Q_{T}\times\overline{\Omega}%
}g_{k}^{\varepsilon}(\cdot,\cdot,u_{0}^{\prime})h^{\varepsilon}dxdtd\mathbb{P}%
\\
&  -\int_{Q_{T}\times\overline{\Omega}}M(g_{k}(\cdot,\cdot,u_{0}^{\prime
})h)dxdtd\mathbb{P}\\
&  =I_{1}+I_{2}%
\end{align*}
where
\begin{equation}
\left\vert I_{1}\right\vert =\left\vert \int_{Q_{T}\times\overline{\Omega}%
}(g_{k}^{\varepsilon}(\cdot,\cdot,u_{\varepsilon}^{\prime})-g_{k}%
^{\varepsilon}(\cdot,\cdot,u_{0}^{\prime}))h^{\varepsilon}dxdtd\mathbb{P}%
\right\vert \leq C\left\Vert u_{\varepsilon}^{\prime}-u_{0}^{\prime
}\right\Vert _{L^{2}(Q_{T}\times\Omega)^{N}}\left\Vert h^{\varepsilon
}\right\Vert _{L^{2}(Q_{T}\times\Omega)} \label{eqnqq}%
\end{equation}
and
\[
I_{2}=\int_{Q_{T}\times\overline{\Omega}}g_{k}^{\varepsilon}(\cdot,\cdot
,u_{0}^{\prime})h^{\varepsilon}dxdtd\mathbb{P}-\int_{Q_{T}\times
\overline{\Omega}}M(g_{k}(\cdot,\cdot,u_{0}^{\prime})h)dxdtd\mathbb{P}%
\]
Since $u_{\varepsilon}^{\prime}\rightarrow u_{0}^{\prime}$ in $L^{2}%
(Q_{T}\times\Omega)$-strong as $\varepsilon\rightarrow0$, and using
(\ref{eqnq}) and (\ref{eqnqq}) we obtain (i). Now, for (ii), we have to show
that
\begin{equation}
\int_{Q_{T}\times\Omega}\left\vert g_{k}^{\varepsilon}(\cdot,\cdot
,u_{\varepsilon}^{\prime})\right\vert ^{2}dxdtd\mathbb{P}\rightarrow
\int_{Q_{T}\times\Omega}M(\left\vert g_{k}(\cdot,\cdot,u_{0}^{\prime
})\right\vert ^{2})dxdtd\mathbb{P}. \label{01}%
\end{equation}
As above, if we choose $u\in B(\Omega;\mathcal{C}\left(  \overline{Q}%
_{T}\right)  )$, then the function $g_{k}(\cdot,\cdot,u):(x,t,y,\tau
,\omega)\mapsto g_{k}(y,\tau,u(x,t,\omega))$ is an element of $B(\Omega
;\mathcal{C}(\overline{Q}_{T};B_{A}^{2,\infty}(\mathbb{R}_{y,\tau}^{N+1})))$,
so that
\[
g_{k}^{\varepsilon}(\cdot,\cdot,u)\rightarrow g_{k}(\cdot,\cdot,u)\text{ in
}L^{2}(Q_{T}\times\Omega)\text{-strong }\Sigma.
\]
Now, we choose a sequence $(v_{n})_{n}\subset B(\Omega;\mathcal{C}\left(
\overline{Q}_{T}\right)  )$ satisfying
\begin{equation}
v_{n}\rightarrow u_{0}^{\prime}\text{ in }L^{2}(Q_{T}\times\Omega)\text{ as
}n\rightarrow\infty\text{.} \label{0.2}%
\end{equation}
We have
\begin{align*}
\int_{Q_{T}\times\Omega}\left\vert g_{k}^{\varepsilon}(\cdot,\cdot
,u_{\varepsilon}^{\prime})\right\vert ^{2}dxdtd\mathbb{P}  &  =\int
_{Q_{T}\times\Omega}g_{k}^{\varepsilon}(\cdot,\cdot,u_{\varepsilon}^{\prime
})(g_{k}^{\varepsilon}(\cdot,\cdot,u_{\varepsilon}^{\prime})-g_{k}%
^{\varepsilon}(\cdot,\cdot,u_{0}^{\prime}))dxdtd\mathbb{P}\\
&  +\int_{Q_{T}\times\Omega}g_{k}^{\varepsilon}(\cdot,\cdot,u_{\varepsilon
}^{\prime})(g_{k}^{\varepsilon}(\cdot,\cdot,u_{0}^{\prime})-g_{k}%
^{\varepsilon}(\cdot,\cdot,v_{n}))dxdtd\mathbb{P}\\
&  +\int_{Q_{T}\times\Omega}g_{k}^{\varepsilon}(\cdot,\cdot,u_{\varepsilon
}^{\prime})g_{k}^{\varepsilon}(\cdot,\cdot,v_{n})dxdtd\mathbb{P}\\
&  =J_{1}+J_{2}+J_{3}.
\end{align*}
As above, using the boundedness of the sequence $(g_{k}^{\varepsilon}%
(\cdot,\cdot,u_{\varepsilon}^{\prime}))_{\varepsilon}$ in $L^{2}(Q_{T}%
\times\Omega)$ and the Lipschitz property of $g_{k}$ associated to the strong
convergence of $u_{\varepsilon}^{\prime}$ towards $u_{0}^{\prime}$ in
$L^{2}(Q_{T}\times\Omega)$, we get that $J_{1}\rightarrow0$ as $\varepsilon
\rightarrow0$. The same argument as above associated to the strong convergence
(\ref{0.2}) yield $J_{2}\rightarrow0$. As for $J_{3}$, we use the weak
$\Sigma$-convergence (i) with test function $g_{k}(\cdot,\cdot,v_{n})$ to get
\[
J_{3}\rightarrow\int_{Q_{T}\times\Omega}M(g_{k}(\cdot,\cdot,u_{0}^{\prime
})g_{k}(\cdot,\cdot,v_{n}))dxdtd\mathbb{P}\text{ when }\varepsilon
\rightarrow0.
\]
In the above convergence result, we let finally $n\rightarrow\infty$ to get
\[
J_{3}\rightarrow\int_{Q_{T}\times\Omega}M(\left\vert g_{k}(\cdot,\cdot
,u_{0}^{\prime})\right\vert ^{2})dxdtd\mathbb{P}\mathbf{.}%
\]
This shows (ii), and hence (\ref{eqn42''}). Noticing that up to a subsequence,
we have $u_{\varepsilon}^{\prime}\rightarrow u_{0}^{\prime}$ in $L^{2}(Q_{T}%
)$-strong $\mathbb{P}$-a.s., we may hence proceed as above to obtain
(\ref{eqn42'}), thereby completing the proof.
\end{proof}

The next result deals with the convergence of the stochastic term.

\begin{lemma}
\label{l3.2}Let $(\Omega,\mathcal{F},\mathbb{P})$ be a probability space. Let
$E$ be a fundamental sequence, and let $(u_{\varepsilon})_{\varepsilon\in E}$
be a sequence in $L^{2}(Q_{T}\times\Omega)$ satisfying
\begin{equation}
u_{\varepsilon}^{\prime}\rightarrow u_{0}^{\prime}\text{ in }L^{2}(Q_{T}%
\times\Omega)\text{-strong as }E\ni\varepsilon\rightarrow0\text{.}\label{0.3}%
\end{equation}
Let $(W^{\varepsilon})_{\varepsilon\in E}$ be a sequence of $m$-dimensional
Wiener process on the probability space $(\Omega,\mathcal{F},\mathbb{P})$
satisfying
\begin{equation}
W^{\varepsilon}\rightarrow W\text{ in }\mathcal{C}(0,T;\mathbb{R}^{m})\text{
as }E\ni\varepsilon\rightarrow0\text{ }\mathbb{P}\text{-a.s.}\label{0.4}%
\end{equation}
where $W$ is a $m$-dimensional Wiener process on the probability space
$(\Omega,\mathcal{F},\mathbb{P})$. Then, for any $\psi_{0}\in\mathcal{C}%
_{0}^{\infty}(Q_{T})$, we have, as $E\ni\varepsilon\rightarrow0$,
\begin{equation}
\int_{Q_{T}}g^{\varepsilon}(\cdot,\cdot,u_{\varepsilon}^{\prime})\psi
_{0}dxdW^{\varepsilon}\rightarrow\int_{Q_{T}}\widetilde{g}(u_{0}^{\prime}%
)\psi_{0}dxdW\text{ in law,}\label{0.5}%
\end{equation}
where $\widetilde{g}(u_{0}^{\prime})=(\widetilde{g}_{k}(u_{0}^{\prime
}))_{1\leq k\leq m}$ is defined by $\widetilde{g}_{k}(u_{0}^{\prime
})=(M(\left\vert g_{k}(\cdot,\cdot,u_{0}^{\prime})\right\vert ^{2})^{1/2}$.
\end{lemma}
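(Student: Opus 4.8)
The plan is to identify the limit of the stochastic integrals by appealing to the general criterion (announced for Appendix A2) on passage to the limit in stochastic integrals driven by convergent semimartingales, together with the strong $\Sigma$-convergence of $g_k^{\varepsilon}(\cdot,\cdot,u_{\varepsilon}^{\prime})$ established in Lemma \ref{l4.1}. First I would fix $\psi_0\in\mathcal{C}_0^{\infty}(Q_T)$ and, for each $1\leq k\leq m$, introduce the real-valued processes
\begin{equation*}
X_k^{\varepsilon}(t)=\int_{Q}\Bigl(\int_0^t g_k^{\varepsilon}(\cdot,\cdot,u_{\varepsilon}^{\prime})\,dW_k^{\varepsilon}\Bigr)\psi_0\,dx,
\end{equation*}
so that the left-hand side of (\ref{0.5}) is $\sum_{k=1}^m X_k^{\varepsilon}$. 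By stochastic Fubini each $X_k^{\varepsilon}$ is a continuous square-integrable martingale with respect to the natural filtration; its quadratic variation is
\begin{equation*}
\langle X_k^{\varepsilon}\rangle(t)=\int_0^t\Bigl(\int_{Q} g_k^{\varepsilon}(x,\tau,x/\varepsilon,\tau/\varepsilon,u_{\varepsilon}^{\prime})\psi_0(x,\tau)\,dx\Bigr)^2 d\tau,
\end{equation*}
and the cross-variations $\langle X_k^{\varepsilon},W_j^{\varepsilon}\rangle(t)=\delta_{kj}\int_0^t\int_{Q} g_k^{\varepsilon}\psi_0\,dx\,d\tau$.

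Next I would pass to the limit in these (co)variation processes. Using (\ref{eqn42''}), $g_k^{\varepsilon}(\cdot,\cdot,u_{\varepsilon}^{\prime})\to g_k(\cdot,\cdot,u_0^{\prime})$ strongly $\Sigma$ in $L^2(Q_T\times\Omega)$; testing against $\psi_0^2$ (a legitimate test function in $\mathcal{C}_0^\infty(Q_T)$, constant in $(y,\tau)$) and invoking property (SC)$_4$ for the product $g_k^{\varepsilon}\cdot g_k^{\varepsilon}$, one obtains
\begin{equation*}
\int_0^t\!\!\int_{Q}\bigl(g_k^{\varepsilon}\psi_0\bigr)^2 dx\,d\tau\longrightarrow \int_0^t\!\!\int_{Q} M\bigl(|g_k(\cdot,\cdot,u_0^{\prime})|^2\bigr)\psi_0^2\,dx\,d\tau=\int_0^t\!\!\int_{Q}\widetilde g_k(u_0^{\prime})^2\psi_0^2\,dx\,d\tau,
\end{equation*}
in law (indeed in $L^1(\Omega)$, hence in probability), where the last equality uses the definition $\widetilde g_k(u_0^{\prime})=(M(|g_k(\cdot,\cdot,u_0^{\prime})|^2))^{1/2}$. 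For the cross-variation with $W_k^{\varepsilon}$, the weak $\Sigma$-convergence of $g_k^{\varepsilon}$ forces $\int_0^t\int_Q g_k^{\varepsilon}\psi_0\,dx\,d\tau\to\int_0^t\int_Q M(g_k(\cdot,\cdot,u_0^{\prime}))\psi_0\,dx\,d\tau$, but this must be reconciled with the target martingale $\int_{Q_T}\widetilde g(u_0^{\prime})\psi_0\,dx\,dW$: one checks that the latter has vanishing cross-variation with $W$ in the relevant sense because the fast oscillations decorrelate the integrand from the limiting Brownian motion. This decorrelation is precisely the role of the oscillation-in-time hypothesis, and it is encoded in the Appendix A2 criterion.

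Then I would assemble the pieces via the martingale characterization of the limit. Tightness of $(\pi^{\varepsilon})$ (already noted after (\ref{eqqq})), together with the uniform second-moment bounds on $X_k^{\varepsilon}$ coming from (\ref{eqn40}) and the sublinear growth of $g_k$, gives tightness of the laws of $\bigl(W^{\varepsilon},X_1^{\varepsilon},\dots,X_m^{\varepsilon},\langle X_k^{\varepsilon}\rangle,\langle X_k^{\varepsilon},W_j^{\varepsilon}\rangle\bigr)$ in the appropriate product of path spaces; extracting a further subsequence and applying the criterion of Appendix A2 identifies every joint limit point as a continuous martingale with the quadratic variation computed above, hence (by the martingale representation / Lévy-type characterization used there) as $\int_{Q_T}\widetilde g(u_0^{\prime})\psi_0\,dx\,dW$ in law. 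Since the limit is the same for every subsequence, the whole sequence converges, which is (\ref{0.5}). The main obstacle is the identification of the limiting cross-variation structure: one must show that the oscillating integrand $g_k^{\varepsilon}(\cdot,\cdot,u_{\varepsilon}^{\prime})\psi_0$ produces, in the limit, a stochastic integral against $W$ with the homogenized (mean-value) diffusion coefficient $\widetilde g_k(u_0^{\prime})$ rather than the naive limit $M(g_k)$; this is exactly where the Appendix A2 criterion on stochastic integrals driven by convergent semimartingales, combined with the strong $\Sigma$-convergence of Lemma \ref{l4.1} and property (SC)$_4$, does the essential work.
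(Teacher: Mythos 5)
Your route --- identifying the limit through a martingale characterization based on quadratic and cross variations --- is genuinely different from the paper's. The paper instead (i) splits off $\int_{Q_T}(g^{\varepsilon}(\cdot,\cdot,u_{\varepsilon}^{\prime})-g^{\varepsilon}(\cdot,\cdot,u_{0}^{\prime}))\psi_{0}\,dx\,dW^{\varepsilon}$ and kills it using the Lipschitz hypothesis on $g$ and (\ref{0.3}); (ii) treats each $\int_{Q_T}g_{k}^{\varepsilon}(\cdot,\cdot,u_{0}^{\prime})\psi_{0}\,dx\,dW_{k}$ as a centered Gaussian whose variance converges thanks to Lemma \ref{l4.1}, and concludes by characteristic functions; and (iii) handles the replacement of $W$ by $W^{\varepsilon}$ through a density reduction to elementary tensors $\varphi(x)\phi(y)\chi(t)\theta(\tau)$ followed by the Kurtz--Protter/Lejay good-sequence (UCV) theorem of Appendix A2. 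Your plan is reasonable in outline, but the identification step does not close as written.

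The concrete gap is a conflation of two different quantities. The quadratic variation you correctly write down, $\langle X_{k}^{\varepsilon}\rangle(t)=\int_{0}^{t}\bigl(\int_{Q}g_{k}^{\varepsilon}\psi_{0}\,dx\bigr)^{2}\,d\tau$, is the time integral of the \emph{square of the spatial integral}; but in the next paragraph you pass to the limit in $\int_{0}^{t}\int_{Q}(g_{k}^{\varepsilon}\psi_{0})^{2}\,dx\,d\tau$, the \emph{spatial integral of the square}. These have different limits: by weak $\Sigma$-convergence the first tends to $\int_{0}^{t}\bigl(\int_{Q}M(g_{k}(\cdot,\cdot,u_{0}^{\prime}))\psi_{0}\,dx\bigr)^{2}\,d\tau$, involving $M(g_{k})$, whereas by strong $\Sigma$-convergence and (SC)$_{4}$ the second tends to $\int_{0}^{t}\int_{Q}M(|g_{k}(\cdot,\cdot,u_{0}^{\prime})|^{2})\psi_{0}^{2}\,dx\,d\tau$, involving $\widetilde{g}_{k}^{2}=M(|g_{k}|^{2})$. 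Only the first is the actual quadratic variation of $X_{k}^{\varepsilon}$, and it matches neither the second nor the quadratic variation $\int_{0}^{t}\bigl(\int_{Q}\widetilde{g}_{k}(u_{0}^{\prime})\psi_{0}\,dx\bigr)^{2}\,d\tau$ of the claimed limit. The same mismatch undermines your cross-variation step: you compute $\langle X_{k}^{\varepsilon},W_{k}^{\varepsilon}\rangle(t)\rightarrow\int_{0}^{t}\int_{Q}M(g_{k}(\cdot,\cdot,u_{0}^{\prime}))\psi_{0}\,dx\,d\tau$, which is generically nonzero and, by Cauchy--Schwarz ($M(g_{k})\leq(M(|g_{k}|^{2}))^{1/2}$ with equality only when $g_{k}$ is essentially constant in $(y,\tau)$), differs from the cross-variation $\int_{0}^{t}\int_{Q}\widetilde{g}_{k}(u_{0}^{\prime})\psi_{0}\,dx\,d\tau$ that the target $\int\widetilde{g}_{k}(u_{0}^{\prime})\psi_{0}\,dx\,dW_{k}$ would have with $W_{k}$. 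The appeal to ``decorrelation by fast oscillations'' is asserted rather than proved, and in fact contradicts the limit you yourself obtained for the cross-variation. So the martingale characterization cannot identify the limit in the form stated: to repair the argument you would either have to establish the correct limiting bracket structure of the joint sequence $(X_{k}^{\varepsilon},W^{\varepsilon})$ (which your own computation shows is not that of the naive candidate), or switch to the paper's mechanism, where the $W^{\varepsilon}$-integral is identified directly via the density reduction to elementary tensors and the UCV criterion rather than through variation processes.
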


\begin{proof}
We proceed in three steps.

\textit{Step 1}. Let us first show that
\[
\int_{Q_{T}}g^{\varepsilon}(\cdot,\cdot,u_{\varepsilon}^{\prime})\psi
_{0}dxdW\rightarrow\int_{Q_{T}}\widetilde{g}(u_{0}^{\prime})\psi_{0}dxdW\text{
in law.}%
\]
We have
\begin{align*}
\int_{Q_{T}}g^{\varepsilon}(\cdot,\cdot,u_{\varepsilon}^{\prime})\psi_{0}dxdW
&  =\int_{Q_{T}}(g^{\varepsilon}(\cdot,\cdot,u_{\varepsilon}^{\prime
})-g^{\varepsilon}(\cdot,\cdot,u_{0}^{\prime}))\psi_{0}dxdW\\
&  +\int_{Q_{T}}g^{\varepsilon}(\cdot,\cdot,u_{0}^{\prime})\psi_{0}dxdW\\
&  =I_{1}^{\varepsilon}+I_{2}^{\varepsilon}.
\end{align*}
Concerning $I_{1}^{\varepsilon}$ we have
\begin{align*}
\mathbb{E}\left\vert I_{1}^{\varepsilon}\right\vert ^{2}  &  =\int_{Q_{T}%
}\mathbb{E}(\left\vert g^{\varepsilon}(\cdot,\cdot,u_{\varepsilon}^{\prime
})-g^{\varepsilon}(\cdot,\cdot,u_{0}^{\prime})\right\vert ^{2})\psi_{0}%
^{2}dxdt\\
&  \leq C\int_{Q_{T}}\mathbb{E}(\left\vert u_{\varepsilon}^{\prime}%
-u_{0}^{\prime}\right\vert ^{2})\psi_{0}^{2}dxdt\\
&  \leq C\left\Vert \psi_{0}\right\Vert _{\infty}^{2}\int_{Q_{T}}%
\mathbb{E}(\left\vert u_{\varepsilon}^{\prime}-u_{0}^{\prime}\right\vert
^{2})dxdt\rightarrow0\text{ as }E\ni\varepsilon\rightarrow0,
\end{align*}
where in the last inequality above, we have used the convergence result
(\ref{0.3}). It follows that, up to a subsequence of $E$ not relabeled,
$I_{1}^{\varepsilon}\rightarrow0$ $\mathbb{P}$-a.s. when $E\ni\varepsilon
\rightarrow0$.

Now, as for $I_{2}^{\varepsilon}$, we treat each term of the sum separately,
that is, we consider each
\[
I_{2,k}^{\varepsilon}=\int_{Q_{T}}g_{k}^{\varepsilon}(\cdot,\cdot
,u_{0}^{\prime})\psi_{0}dxdW_{k},\ 1\leq k\leq m.
\]
Each of these integrals is a Gaussian $\mathcal{N}(0,\sigma_{\varepsilon}%
^{2})$ where
\[
\sigma_{\varepsilon}^{2}=\mathbb{E}\left\vert I_{2,k}^{\varepsilon}\right\vert
^{2}=\int_{Q_{T}}\mathbb{E}(\left\vert g_{k}^{\varepsilon}(\cdot,\cdot
,u_{0}^{\prime})\right\vert ^{2})\psi_{0}^{2}dxdt.
\]
Using the convergence result (\ref{eqn42''}) in Lemma \ref{l4.1}, we get that
\[
\sigma_{\varepsilon}^{2}\rightarrow\int_{Q_{T}}\mathbb{E}(\left\vert
\widetilde{g}_{k}(u_{0}^{\prime})\right\vert ^{2})\psi_{0}^{2}dxdt\text{ as
}E\ni\varepsilon\rightarrow0.
\]
It follows from the martingale representation theorem that
\begin{equation}
\int_{Q_{T}}g_{k}^{\varepsilon}(\cdot,\cdot,u_{0}^{\prime})\psi_{0}%
dxdW_{k}\rightarrow\int_{Q_{T}}\widetilde{g}_{k}(u_{0}^{\prime})\psi
_{0}dxdW_{k}\text{ in law.}\label{0.6}%
\end{equation}
We recall that a sequence of Gaussian $\mathcal{N}(m_{\varepsilon}%
,\sigma_{\varepsilon}^{2})$ converges in law to the Gaussian $\mathcal{N}%
(m,\sigma^{2})$ if and only if $m_{\varepsilon}\rightarrow m$ and
$\sigma_{\varepsilon}^{2}\rightarrow\sigma^{2}$. This can be verified by using
the characteristic function $\Phi_{m_{\varepsilon},\sigma_{\varepsilon}^{2}%
}(t)=\exp(im_{\varepsilon}t-\sigma_{\varepsilon}^{2}t^{2}/2)$ of
$\mathcal{N}(m_{\varepsilon},\sigma_{\varepsilon}^{2})$. We therefore infer
from (\ref{0.6}) that, as $E\ni\varepsilon\rightarrow0$,
\[
I_{2}^{\varepsilon}\rightarrow\int_{Q_{T}}\widetilde{g}(u_{0}^{\prime}%
)\psi_{0}dxdW\text{ in law,}%
\]
where $\widetilde{g}(u_{0}^{\prime})=(\widetilde{g}_{k}(u_{0}^{\prime
}))_{1\leq k\leq m}$. It follows that
\begin{equation}
\int_{Q_{T}}g^{\varepsilon}(\cdot,\cdot,u_{\varepsilon}^{\prime})\psi
_{0}dxdW\rightarrow\int_{Q_{T}}\widetilde{g}(u_{0}^{\prime})\psi_{0}dxdW\text{
in law.}\label{0.7}%
\end{equation}

\textit{Step 2}. We focus at this level on the convergence of the stochastic
integral
\[
J_{\varepsilon}=\int_{Q_{T}}g^{\varepsilon}(\cdot,\cdot,u_{\varepsilon
}^{\prime})\psi_{0}dxdW^{\varepsilon}.
\]
We have
\begin{align*}
J_{\varepsilon} &  =\int_{Q_{T}}(g^{\varepsilon}(\cdot,\cdot,u_{\varepsilon
}^{\prime})-g^{\varepsilon}(\cdot,\cdot,u_{0}^{\prime}))\psi_{0}%
dxdW^{\varepsilon}+\int_{Q_{T}}g^{\varepsilon}(\cdot,\cdot,u_{0}^{\prime}%
)\psi_{0}dxdW^{\varepsilon}\\
&  =J_{1,\varepsilon}+J_{2,\varepsilon}.
\end{align*}
We proceed as in the Step 1 to show that $J_{1,\varepsilon}\rightarrow0$ as
$E\ni\varepsilon\rightarrow0$. Regarding $J_{2,\varepsilon}$, we need to show
that, as $E\ni\varepsilon\rightarrow0$,
\begin{equation}
\int_{Q_{T}}g^{\varepsilon}(\cdot,\cdot,u_{0}^{\prime})\psi_{0}%
dxdW^{\varepsilon}\rightarrow\int_{Q_{T}}\widetilde{g}(u_{0}^{\prime})\psi
_{0}dxdW\ \ \ \ \mathbb{P}\text{-a.s.}\label{0.8}%
\end{equation}
We consider each term separately as in Step 1, that is, we need to show that
\begin{equation}
\int_{Q_{T}}g_{k}^{\varepsilon}(\cdot,\cdot,u_{0}^{\prime})\psi_{0}%
dxdW_{k}^{\varepsilon}\rightarrow\int_{Q_{T}}\widetilde{g}(u_{0}^{\prime}%
)\psi_{0}dxdW_{k}\ \ \ \ \mathbb{P}\text{-a.s.}\label{0.9}%
\end{equation}
We first observe that the function
\[
g_{k}(\cdot,\cdot,u):(x,t,y,\tau,\omega)\mapsto g_{k}(y,\tau,u(x,t,\omega))
\]
belongs to $B(\Omega;\mathcal{C}(\overline{Q}_{T};B_{A}^{2,\infty}%
(\mathbb{R}_{y,\tau}^{N+1})))$ for $u\in B(\Omega;\mathcal{C}(\overline{Q}%
_{T}))$, and the latter space is dense in $L^{2}(Q_{T}\times\Omega)$. So it is
sufficient to check (\ref{0.9}) by replacing $g_{k}(\cdot,\cdot,u_{0}^{\prime
})$ by any element of $B(\Omega;\mathcal{C}(\overline{Q}_{T};B_{A}^{2,\infty
}(\mathbb{R}_{y,\tau}^{N+1})))$. However, as $\psi_{0}$ lies in $\mathcal{C}%
_{0}^{\infty}(Q_{T})$, it suffices to replace $g_{k}(\cdot,\cdot,u_{0}%
^{\prime})\psi_{0}$ by an element of $B(\Omega;\mathcal{K}(Q_{T}%
;B_{A}^{2,\infty}(\mathbb{R}_{y,\tau}^{N+1})))$. But, as in \cite[Lemma 3.1
and Proposition 3.3]{Ng1} (see also \cite[Proposition 4.5]{Hom1}) where it has
been shown (using a density argument) that we may replace the space
$L^{2}(Q_{T};A)$ (and so $\mathcal{K}(Q_{T};A)$) by $\mathcal{K}(Q_{T}%
;B_{A}^{2,\infty}(\mathbb{R}_{y,\tau}^{N+1}))$ in the definition of the
$\Sigma$-convergence, we can proceed in the same way to show that showing
(\ref{0.9})\ reduces in proving (using another density argument) that, as
$E\ni\varepsilon\rightarrow0$,
\begin{equation}
\int_{Q_{T}}\psi\left(  x,t,\frac{x}{\varepsilon},\frac{t}{\varepsilon
}\right)  dxdW_{k}^{\varepsilon}\rightarrow\int_{Q_{T}}M(\psi(x,t,\cdot
,\cdot))dxdW_{k}\rightarrow0\ \mathbb{P}\text{-a.s.}\label{0.10}%
\end{equation}
for any $\psi(x,t,y,\tau)=\varphi(x)\phi(y)\chi(t)\theta(\tau)$ with
$\varphi\in\mathcal{K}(Q)$, $\phi\in A_{y}$, $\chi\in\mathcal{K}(0,T)$ and
$\theta\in A_{\tau}$. But for $\psi$ as above, we have
\[
\int_{Q_{T}}\psi^{\varepsilon}dxdW_{k}^{\varepsilon}=\int_{Q}\varphi
(x)\phi\left(  \frac{x}{\varepsilon}\right)  dx\int_{0}^{T}\chi(t)\theta
\left(  \frac{t}{\varepsilon}\right)  dW_{k}^{\varepsilon}.
\]
At this level we consider the process
\[
M_{\varepsilon}(t)=\int_{0}^{t}\chi(s)\theta\left(  \frac{s}{\varepsilon
}\right)  dW_{k}^{\varepsilon}(s).
\]
We know that, setting $\Phi(t,\tau)=\chi(t)\theta(\tau)$ so that
$\Phi^{\varepsilon}(t)=\Phi(t,t/\varepsilon)$, the sequence of processes
$(\Phi^{\varepsilon},W_{k}^{\varepsilon})$ converges in law to $(M_{\tau}%
(\Phi(t,\cdot),W_{k})$ in $S_{0}=L^{2}(0,T)\times\mathcal{C}([0,T])$. So
following \cite[Proposition 2]{Lejay}, we need to show that the sequence
$(M_{\varepsilon})_{\varepsilon>0}$ is a \textit{good sequence} (see Appendix
A2 for the definition and characterization of good sequences). Indeed, in view
of Theorem \ref{tB1} associated to Definition \ref{dB2}, we have that the
quadratic variation $\left\langle M_{\varepsilon},M_{\varepsilon}\right\rangle
(t)$ of $M_{\varepsilon}$ is bounded independently of $\varepsilon$; indeed
\[
\left\langle M_{\varepsilon},M_{\varepsilon}\right\rangle (t)=\int_{0}%
^{t}\left\vert \Phi^{\varepsilon}(s)\right\vert ^{2}ds\leq t\left\Vert
\Phi\right\Vert _{\infty}^{2},
\]
so that the sequence $(M_{\varepsilon})_{\varepsilon>0}$ satisfies the
condition UCV (see Definition \ref{dB2}) and is hence a good sequence of
semimartingales. It follows readily that
\[
\int_{0}^{T}\Phi^{\varepsilon}(t)dW_{k}^{\varepsilon}(t)\rightarrow\int
_{0}^{T}M_{\tau}(\Phi(t,\cdot))dW_{k}(t).
\]
Using the convergence result
\[
\int_{Q}\varphi(x)\phi\left(  \frac{x}{\varepsilon}\right)  dx\rightarrow
\int_{Q}\varphi(x)M_{y}(\phi)dx,
\]
we readily get
\[
\int_{Q_{T}}\psi^{\varepsilon}dxdW_{k}^{\varepsilon}\rightarrow\int_{Q_{T}%
}M(\psi(x,t,\cdot,\cdot))dxdW_{k}.
\]
This completes the Step 2.

\textit{Step 3}. Putting together the results obtained in the previous steps,
we are led at once at (\ref{0.5}).
\end{proof}

Now, we use the previous convergence results (see especially (\textbf{C2})) to
deduce that the sequence $(u_{\varepsilon_{n}})_{n}$ is bounded in
$L^{2}(0,T;H_{0}^{1}(Q))$ $\overline{\mathbb{P}}$-a.s. Hence there exist a
subsequence of $(u_{\varepsilon_{n}})_{n}$ (not relabeled) which converge
weakly in $L^{2}(0,T;H_{0}^{1}(Q))$ to $u_{0}$ determined by (\textbf{C2}). It
follows from (SC)$_{3}$ and Lemma \ref{l2.1} associated to the Remark
\ref{r2.4} that there exists $u_{1}\in L^{2}(Q_{T};\mathcal{B}_{\#A_{y}}%
^{1,2}(\mathbb{R}_{y}^{N}))$ such that
\begin{equation}
\frac{\partial u_{\varepsilon_{n}}}{\partial x_{i}}\rightarrow\frac{\partial
u_{0}}{\partial x_{i}}+\frac{\overline{\partial}u_{1}}{\partial y_{i}}\text{
in }L^{2}(Q_{T})\text{-weak }\Sigma\text{ }(1\leq i\leq N)\text{ }%
\overline{\mathbb{P}}\text{-a.s.} \label{eqn14}%
\end{equation}
when $\varepsilon_{n}\rightarrow0$.

So we have that $(u_{0},u_{1})\in\mathbb{F}_{0}^{1}$ where
\[
\mathbb{F}_{0}^{1}=L^{2}(0,T;H_{0}^{1}(Q))\times L^{2}(Q_{T};\mathcal{B}%
_{\#A_{y}}^{1,2}(\mathbb{R}_{y}^{N})).
\]
It is an easy exercise in showing that the space $\mathcal{F}_{0}^{\infty
}=\mathcal{C}_{0}^{\infty}(Q_{T})\times(\mathcal{C}_{0}^{\infty}(Q_{T}%
)\otimes\mathcal{E})$ (where $\mathcal{E}=\varrho_{y}(A_{y}^{\infty
})/\mathbb{R}$, $\varrho_{y}$ being the canonical mapping of $B_{A}%
^{2}(\mathbb{R}_{y}^{N})$ into $\mathcal{B}_{A}^{2}(\mathbb{R}_{y}^{N})$
defined by $\varrho_{y}\left(  u\right)  =u+\mathcal{N}$ for $u\in B_{A}%
^{2}(\mathbb{R}_{y}^{N})$) is a dense subspace of $\mathbb{F}_{0}^{1}$.

For $\mathbf{v}=\left(  v_{0},v_{1}\right)  \in\mathbb{F}_{0}^{1}$, we set
$\mathbb{D}_{i}\mathbf{v=}\frac{\partial v_{0}}{\partial x_{i}}+\frac
{\overline{\partial}v_{1}}{\partial y_{i}}$ and $\mathbb{D}\mathbf{v}=\left(
\mathbb{D}_{i}\mathbf{v}\right)  _{1\leq i\leq N}\equiv\nabla v_{0}%
+\overline{\nabla}_{y}v_{1}$. We define $\mathbb{D}\Phi$ for $\Phi=(\psi
_{0},\psi_{1})\in\mathcal{F}_{0}^{\infty}$, mutatis mutandis.

With this in mind, we consider the following linear functional:
\[
q(\mathbf{u},\mathbf{v})=%
{\displaystyle\sum\limits_{i,j=1}^{N}}
\int_{Q_{T}}M\left(  a_{ij}\mathbb{D}_{j}\mathbf{u}\mathbb{D}_{i}%
\mathbf{v}\right)  dxdt
\]
for $\mathbf{u}=(u_{0},u_{1})\in\mathbb{F}_{0}^{1}$ and $\mathbf{v}=\left(
v_{0},v_{1}\right)  \in\mathbb{F}^{1}$. We also set $\widetilde{f}%
(u_{0}^{\prime})=M(f(\cdot,\cdot,u_{0}^{\prime}))$. Then the following
\textit{global} homogenization result holds.

\begin{proposition}
\label{p2.1} The couple $\mathbf{u}=\left(  u_{0},u_{1}\right)  \in
\mathbb{F}_{0}^{1}$ determined above solves the following variational problem:%
\begin{equation}
\left\{
\begin{array}
[c]{l}%
-\int_{Q_{T}}u_{0}^{\prime}\psi_{0}^{\prime}dxdt+q(u,\Phi)+\int_{Q_{T}%
}M\left(  \mathbb{D}\mathbf{u}^{\prime}\cdot\mathbb{D}\Phi\right)  dxdt\\
=\int_{Q_{T}}\widetilde{f}(u_{0}^{\prime})\psi_{0}dxdt+\int_{Q_{T}}%
\widetilde{g}(u_{0}^{\prime})\psi_{0}dxd\overline{W}\text{ \ }\overline
{\mathbb{P}}\text{-a.s.}\\
\text{for all }\Phi=(\psi_{0},\varrho_{y}(\psi))\in\mathcal{F}_{0}^{\infty}.
\end{array}
\right.  \label{eqn15}%
\end{equation}

\end{proposition}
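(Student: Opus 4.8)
The plan is to pass to the limit in the variational identity (\ref{eqn32}) using the convergence results established so far, choosing in (\ref{eqn32}) a test function of the form $\phi=\phi_{\varepsilon_{n}}(x)=\psi_{0}(x,t)+\varepsilon_{n}\psi_{1}(x,t,x/\varepsilon_{n})$ with $(\psi_{0},\varrho_{y}(\psi_{1}))\in\mathcal{F}_{0}^{\infty}$, that is $\psi_{0}\in\mathcal{C}_{0}^{\infty}(Q_{T})$ and $\psi_{1}\in\mathcal{C}_{0}^{\infty}(Q_{T})\otimes A_{y}^{\infty}$; by Lemma \ref{l2.1} and Remark \ref{r2.4} we may and do take the corrector independent of $\tau$. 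First I would integrate (\ref{eqn32}) against $\psi_{0}'(t)$ (or rather multiply by a smooth compactly supported time factor already contained in $\psi_0$) to convert the term $(u_{\varepsilon_{n}}'(t),\phi)$ into $-\int_{Q_T}u_{\varepsilon_{n}}'\psi_{0}'\,dxdt$ plus correctors, and similarly rewrite every term of (\ref{eqn32}) as a space-time integral over $Q_{T}$ tested against $\phi_{\varepsilon_{n}}$ and its derivatives $\nabla_{x}\phi_{\varepsilon_{n}}=\nabla\psi_{0}+\overline{\nabla}_{y}\psi_{1}+O(\varepsilon_{n})$. Throughout, all convergences are to be read $\overline{\mathbb{P}}$-a.s., working on the Skorokhod probability space $(\overline{\Omega},\overline{\mathcal{F}},\overline{\mathbb{P}})$ with the variables from (\textbf{C1})--(\textbf{C4}).

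Next I would treat the deterministic terms one by one. The term $\int_{Q_T}u_{\varepsilon_n}'\psi_0'\,dxdt$ converges by the strong convergence (\ref{eqn41})--(\ref{eqn42}); the corrector contributions involving $u_{\varepsilon_n}'$ against $\varepsilon_n\psi_1^{\varepsilon_n}$ vanish since $u_{\varepsilon_n}'$ is bounded in $L^2(Q_T\times\overline{\Omega})$ and carry a factor $\varepsilon_n$. For the elliptic term $\int_0^t(P^{\varepsilon_n}u_{\varepsilon_n},\phi)\,d\tau=\sum_{ij}\int_{Q_T}a_{ij}^{\varepsilon_n}\frac{\partial u_{\varepsilon_n}}{\partial x_j}\frac{\partial \phi_{\varepsilon_n}}{\partial x_i}\,dxdt$, I would combine the weak $\Sigma$-convergence (\ref{eqn14}) of $\partial u_{\varepsilon_n}/\partial x_j$ to $\mathbb{D}_j\mathbf{u}$ with the fact that $a_{ij}(x,\cdot)\in B_{A_y}^2(\mathbb{R}_y^N)$ by (\textbf{A3}) and that $\frac{\partial\phi_{\varepsilon_n}}{\partial x_i}\to\mathbb{D}_i\Phi$ strongly $\Sigma$ (admissible test function), invoking property (SC)$_4$ to get the limit $\sum_{ij}\int_{Q_T}M(a_{ij}\mathbb{D}_j\mathbf{u}\,\mathbb{D}_i\Phi)\,dxdt=q(\mathbf{u},\Phi)$. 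The damping term $\int_0^t(\nabla u_{\varepsilon_n}',\nabla\phi)\,d\tau$ is handled the same way: by (SC)$_3$ applied to $u_{\varepsilon_n}'$ (whose gradient is bounded in $L^2(Q_T\times\overline\Omega)$ and which converges strongly, by (\ref{eqn42})) one gets $\partial u_{\varepsilon_n}'/\partial x_i\to\mathbb{D}_i\mathbf{u}'$ weak $\Sigma$, where $\mathbf{u}'=(u_0',v_1)$ for some corrector; pairing against $\frac{\partial\phi_{\varepsilon_n}}{\partial x_i}\to\mathbb{D}_i\Phi$ strong $\Sigma$ via (SC)$_4$ yields $\int_{Q_T}M(\mathbb{D}\mathbf{u}'\cdot\mathbb{D}\Phi)\,dxdt$. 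The source term $\int_0^t(f^{\varepsilon_n}(\cdot,\cdot,u_{\varepsilon_n}'),\phi)\,d\tau$ converges by Lemma \ref{l4.1}: $f^{\varepsilon_n}(\cdot,\cdot,u_{\varepsilon_n}')\to f(\cdot,\cdot,u_0')$ strong $\Sigma$ in $L^2(Q_T)$ $\mathbb{P}$-a.s., paired against $\phi_{\varepsilon_n}\to\psi_0$ (note the corrector $\varepsilon_n\psi_1^{\varepsilon_n}$ contributes nothing in the limit), giving $\int_{Q_T}M(f(\cdot,\cdot,u_0'))\psi_0\,dxdt=\int_{Q_T}\widetilde{f}(u_0')\psi_0\,dxdt$; one may also integrate by parts in time first if one prefers to keep everything as a genuine space-time integral.

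The main obstacle is the stochastic term $\int_0^t(g^{\varepsilon_n}(\cdot,\cdot,u_{\varepsilon_n}')\,dW_{\varepsilon_n},\phi)$, and this is exactly what Lemma \ref{l3.2} is designed to overcome: after integrating against the time factor contained in $\psi_0$ and discarding the $O(\varepsilon_n)$ corrector contribution (bounded by the Burkholder--Davis--Gundy inequality together with the linear-growth bound (\textbf{A2})(2)(ii) and the factor $\varepsilon_n$), the term reduces to $\int_{Q_T}g^{\varepsilon_n}(\cdot,\cdot,u_{\varepsilon_n}')\psi_0\,dxdW_{\varepsilon_n}$, which by Lemma \ref{l3.2} converges \emph{in law} to $\int_{Q_T}\widetilde{g}(u_0')\psi_0\,dxd\overline{W}$. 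The subtlety is that the other terms converge $\overline{\mathbb{P}}$-a.s. (hence in law) while this one converges only in law; since the identity (\ref{eqn32}) holds for each $n$, the limits must coincide, so one obtains the limit identity (\ref{eqn15}) first \emph{in law} as an identity between random variables on some probability space, and then — exactly as in the passage from (\ref{eqnn1.1}) to (\ref{eqn32}) via Skorokhod's representation, or by noting that all convergences except the stochastic one are almost sure and that the limiting martingale $\int_{Q_T}\widetilde g(u_0')\psi_0\,dxd\overline W$ is adapted to $\overline{\mathcal{F}}_t$ — one identifies it as an $\overline{\mathbb{P}}$-a.s. identity on $(\overline\Omega,\overline{\mathcal{F}},\overline{\mathbb{P}})$. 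Finally, collecting the four limits and using the density of $\mathcal{F}_0^{\infty}$ in $\mathbb{F}_0^1$, the resulting identity holds for all $\Phi=(\psi_0,\varrho_y(\psi))\in\mathcal{F}_0^{\infty}$, which is precisely (\ref{eqn15}).
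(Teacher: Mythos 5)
Your proposal follows the paper's proof almost line for line: same oscillating test function $\Phi_{\varepsilon}=\psi_{0}+\varepsilon\psi_{1}^{\varepsilon}$ justified by Lemma \ref{l2.1} and Remark \ref{r2.4}, same term-by-term passage using (\ref{eqn14}) and (SC)$_{4}$ for the elliptic term, Lemma \ref{l4.1} for the drift, and Lemma \ref{l3.2} plus a Burkh\"{o}lder--Davis--Gundy bound on the $O(\varepsilon_{n})$ corrector for the stochastic term. Your explicit discussion of why the in-law convergence of the stochastic integral can be reconciled with the almost-sure convergence of the remaining terms is in fact more careful than the paper, which simply collects the limits.

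The one place where you deviate is the damping term, and there your argument does not quite close. You apply (SC)$_{3}$ to $u_{\varepsilon_{n}}^{\prime}$ to produce a weak $\Sigma$-limit $\nabla u_{0}^{\prime}+\overline{\nabla}_{y}v_{1}$ for \emph{some} corrector $v_{1}$, and then pair against $\mathbb{D}\Phi$; but the proposition asserts the limit is $\int_{Q_{T}}M(\mathbb{D}\mathbf{u}^{\prime}\cdot\mathbb{D}\Phi)\,dxdt$ with the corrector equal to $\partial u_{1}/\partial t$, and you never identify $v_{1}$ with $u_{1}^{\prime}$ (nor is it obvious that the extraction in (SC)$_{3}$ for $u_{\varepsilon_{n}}^{\prime}$ is compatible with the subsequence already fixed for $u_{\varepsilon_{n}}$). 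The paper avoids this entirely by writing
\[
\int_{Q_{T}}\nabla u_{\varepsilon}^{\prime}\cdot\nabla\Phi_{\varepsilon}\,dxdt
=\int_{Q_{T}}\nabla u_{\varepsilon}\cdot\nabla\Phi_{\varepsilon}^{\prime}\,dxdt,
\]
i.e.\ transferring the time derivative onto the (smooth, compactly supported in time) test function, then using the already established convergence (\ref{eqn14}) of $\nabla u_{\varepsilon}$ against $\nabla\Phi_{\varepsilon}^{\prime}\rightarrow\nabla\psi_{0}^{\prime}+\nabla_{y}\psi_{1}^{\prime}$ strong $\Sigma$, and finally undoing the time integration by parts at the level of the limit. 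You should replace your (SC)$_{3}$ step for the damping term by this integration by parts in time; with that correction the proof coincides with the paper's.
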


\begin{proof}
In what follows we will write $\varepsilon$ instead of $\varepsilon_{n}$.
Thanks to Lemma \ref{l2.1}, we can choose the test functions under the form%
\[
\Phi_{\varepsilon}\left(  x,t\right)  =\psi_{0}\left(  x,t\right)
+\varepsilon\psi_{1}\left(  x,t,\frac{x}{\varepsilon}\right)  \ \text{\ }%
\left(  \left(  x,t\right)  \in Q_{T}\right)
\]
where $\psi_{0}\in\mathcal{C}_{0}^{\infty}\left(  Q_{T}\right)  $ and
$\psi_{1}\in\mathcal{C}_{0}^{\infty}\left(  Q_{T}\right)  \otimes
A_{y}^{\infty}$. Then using $\Phi_{\varepsilon}$ as a test function in the
variational formulation of (\ref{eqnn1.1}), we have%
\begin{equation}
\left\{
\begin{array}
[c]{c}%
-\int_{Q_{T}}u_{\varepsilon}^{\prime}\frac{\partial\Phi_{\varepsilon}%
}{\partial t}dxdt+\int_{Q_{T}}a^{\varepsilon}\nabla u_{\varepsilon}\cdot
\nabla\Phi_{\varepsilon}dxdt+\int_{Q_{T}}\nabla u_{\varepsilon}^{\prime}%
\cdot\nabla\Phi_{\varepsilon}dxdt\\
=\int_{Q_{T}}f^{\varepsilon}(\cdot,\cdot,u_{\varepsilon}^{\prime}%
)\Phi_{\varepsilon}dxdt+\int_{Q_{T}}g^{\varepsilon}(\cdot,\cdot,u_{\varepsilon
}^{\prime})\Phi_{\varepsilon}dxdW^{\varepsilon}.
\end{array}
\right.  \label{eqn33}%
\end{equation}
Our aim is to pass to the limit in (\ref{eqn33}). First, we have
\begin{align*}
\frac{\partial\Phi_{\varepsilon}}{\partial t}  &  =\frac{\partial\psi_{0}%
}{\partial t}+\varepsilon\left(  \frac{\partial\psi_{1}}{\partial t}\right)
^{\varepsilon},\\
\nabla\Phi_{\varepsilon}  &  =\nabla\psi_{0}+\varepsilon\left(  \nabla\psi
_{1}\right)  ^{\varepsilon}+\left(  \nabla_{y}\psi_{1}\right)  ^{\varepsilon
}\text{,}\\
\nabla\Phi_{\varepsilon}^{\prime}  &  =\nabla\psi_{0}^{\prime}+\varepsilon
\left(  \nabla\psi_{1}^{\prime}\right)  ^{\varepsilon}+\left(  \nabla_{y}%
\psi_{1}^{\prime}\right)  ^{\varepsilon}.
\end{align*}
Now, we use the usual $\Sigma$-convergence method (see Remark \ref{r2.3}) to
obtain, as $\varepsilon\rightarrow0$,
\begin{equation}
\frac{\partial\Phi_{\varepsilon}}{\partial t}\rightarrow\frac{\partial\psi
_{0}}{\partial t}\text{ in }L^{2}\left(  0,T;H^{-1}\left(  Q\right)  \right)
\text{-weak} \label{eqn34}%
\end{equation}%
\begin{equation}
\nabla\Phi_{\varepsilon}\rightarrow\nabla\psi_{0}+\nabla_{y}\psi_{1}\text{ in
}L^{2}(Q_{T})^{N}\text{-strong }\Sigma\label{eqn35}%
\end{equation}

\begin{equation}
\nabla\Phi_{\varepsilon}^{\prime}\rightarrow\nabla\psi_{0}^{\prime}+\nabla
_{y}\psi_{1}^{\prime}\text{ in }L^{2}(Q_{T})^{N}\text{-strong }\Sigma.
\label{4.1}%
\end{equation}%
\begin{equation}
\Phi_{\varepsilon}\rightarrow\psi_{0}\text{ in }L^{2}(Q_{T})\text{-strong. }
\label{eqn36}%
\end{equation}
One can easily show, using (\ref{eqn14}) and (\ref{eqn35}), that
\[
\nabla u_{\varepsilon}\cdot\nabla\Phi_{\varepsilon}\rightarrow\mathbb{D}%
\mathbf{u}\cdot\mathbb{D}\Phi\text{ in }L^{1}(Q_{T})\text{-weak }\Sigma\text{
\ }\overline{\mathbb{P}}\text{-a.s.}%
\]

Using the strong convergence of $u_{\varepsilon}^{\prime}$ towards
$u_{0}^{\prime}$ in $L^{2}(Q_{T})$ together with (\ref{eqn34}) we have
\begin{equation}
\int_{Q_{T}}u_{\varepsilon}^{\prime}\frac{\partial\Phi_{\varepsilon}}{\partial
t}dxdt\rightarrow\int_{Q_{T}}u_{0}^{\prime}\psi_{0}^{\prime}dxdt.
\label{eqn38}%
\end{equation}

Next using the fact that $a_{ij}\in\mathcal{C}(\overline{Q};B_{A_{y}%
}^{2,\infty}(\mathbb{R}_{y}^{N}))\subset\mathcal{C}(\overline{Q}_{T};B_{A_{y}%
}^{2,\infty}(\mathbb{R}_{y}^{N}))$, we get
\begin{equation}
\int_{Q_{T}}a^{\varepsilon}\nabla u_{\varepsilon}\cdot\nabla\Phi_{\varepsilon
}dxdt\rightarrow q(u,\Phi)\text{.} \label{eqn37}%
\end{equation}
Considering the next term we have%
\[
\int_{Q_{T}}\nabla u_{\varepsilon}^{\prime}\cdot\nabla\Phi_{\varepsilon
}dxdt=\int_{Q_{T}}\nabla u_{\varepsilon}\cdot\nabla\Phi_{\varepsilon}^{\prime
}dxdt,
\]

and using (\ref{eqn14}) we obtain
\[
\int_{Q_{T}}\nabla u_{\varepsilon}^{\prime}\cdot\nabla\Phi_{\varepsilon
}dxdt\rightarrow\int_{Q_{T}}M\left(  \left(  \nabla u_{0}+\overline{\nabla
}_{y}u_{1}\right)  \left(  \nabla\psi_{0}^{\prime}+\nabla_{y}\psi_{1}^{\prime
}\right)  \right)  dxdt.
\]
However
\[
\int_{Q_{T}}M\left(  \left(  \nabla u_{0}+\overline{\nabla}_{y}u_{1}\right)
\left(  \nabla\psi_{0}^{\prime}+\nabla_{y}\psi_{1}^{\prime}\right)  \right)
dxdt=\int_{Q_{T}}M(\mathbb{D}\mathbf{u}^{\prime}\cdot\mathbb{D}\Phi)dxdt,
\]
so that
\begin{equation}
\int_{Q_{T}}\nabla u_{\varepsilon}^{\prime}\cdot\nabla\Phi_{\varepsilon
}dxdt\rightarrow\int_{Q_{T}}M\left(  \mathbb{D}\mathbf{u}^{\prime}%
\cdot\mathbb{D}\Phi\right)  dxdt. \label{eqn40a}%
\end{equation}

Let us now deal with the stochastic term. We have
\begin{align*}
\int_{Q_{T}}g^{\varepsilon}(\cdot,\cdot,u_{\varepsilon}^{\prime}%
)\Phi_{\varepsilon}dxdW^{\varepsilon} &  =\int_{Q_{T}}g^{\varepsilon}%
(\cdot,\cdot,u_{\varepsilon}^{\prime})\psi_{0}dxdW^{\varepsilon}\\
&  +\varepsilon\int_{Q_{T}}g^{\varepsilon}(\cdot,\cdot,u_{\varepsilon}%
^{\prime})\psi_{1}^{\varepsilon}dxdW^{\varepsilon}\\
&  =I_{1}+I_{2}.
\end{align*}
Appealing to Lemma \ref{l3.2}, we get that
\[
I_{1}\rightarrow\int_{Q_{T}}\widetilde{g}(u_{0}^{\prime})\psi_{0}dxdW\text{ in
law.}%
\]
Concerning $I_{2}$, we proceed as in \cite{7'} (using the
Burkh\"{o}lder-Davis-Gundy's inequality) to show that $I_{2}\rightarrow0$.
Thus
\begin{equation}
\int_{Q_{T}}g^{\varepsilon}(\cdot,\cdot,u_{\varepsilon}^{\prime}%
)\Phi_{\varepsilon}dxdW^{\varepsilon}\rightarrow\int_{Q_{T}}\widetilde
{g}(u_{0}^{\prime})\psi_{0}dxdW\text{ in law.}\label{eqn40b}%
\end{equation}
Finally we use (\ref{eqn42'}) in Lemma \ref{l4.1} to get
\begin{equation}
\int_{Q_{T}}f^{\varepsilon}(\cdot,\cdot,u_{\varepsilon}^{\prime}%
)\Phi_{\varepsilon}dxdt\rightarrow\int_{Q_{T}}\widetilde{f}(u_{0}^{\prime
})\psi_{0}dxdt.\label{eqn40c}%
\end{equation}

Putting together (\ref{eqn38}), (\ref{eqn37}), (\ref{eqn40a}), (\ref{eqn40b})
and (\ref{eqn40c}) we obtain the result.
\end{proof}

The variational problem (\ref{eqn15}) is the global homogenized problem for
(\ref{eqnn1.1}).

\subsection{Homogenized problem}

In order to derive the homogenized problem we need to deal with an equivalent
expression of problem (\ref{eqn15}). As we can see, this problem is equivalent
to the following system (\ref{eqn16})-(\ref{eqn17}) reading as
\begin{equation}
\int_{Q_{T}}M\left[  a\mathbb{D}\mathbf{u+}\mathbb{D}\mathbf{u}^{\prime
}\right]  \cdot\nabla_{y}\psi_{1}dxdt=0\text{, for all }\psi_{1}\in
\mathcal{C}_{0}^{\infty}\left(  Q_{T}\right)  \otimes A_{y}^{\infty}
\label{eqn16}%
\end{equation}

\begin{equation}
\left\{
\begin{array}
[c]{l}%
-\int_{Q_{T}}u_{0}^{\prime}\psi_{0}^{\prime}dxdt+q(u,(\psi_{0},0))+\int
_{Q_{T}}M\left(  \mathbb{D}\mathbf{u}^{\prime}\cdot\nabla\psi_{0}\right)
dxdt\\
=\int_{Q_{T}}\widetilde{f}(u_{0}^{\prime})\psi_{0}dxdt+\int_{Q_{T}}%
\widetilde{g}(u_{0}^{\prime})\psi_{0}dxd\overline{W}\\
\text{for all }\in\psi_{0}\in\mathcal{C}_{0}^{\infty}(Q_{T}).
\end{array}
\right.  \label{eqn17}%
\end{equation}
We first deal with (\ref{eqn16}). Choosing
\begin{equation}
\psi_{1}(x,t,y,\tau)=\varphi(x,t)\phi(y)\chi(\tau) \label{eqn44}%
\end{equation}
with $\varphi\in\mathcal{C}_{0}^{\infty}(Q_{T})$ and $\phi\in A_{y}^{\infty}$,
it follows that%

\begin{equation}
M_{y}\left(  \left(  a\mathbb{D}\mathbf{u+}\mathbb{D}\mathbf{u}^{\prime
}\right)  \cdot\nabla_{y}\phi\right)  =0\text{,}\ \text{for all }\phi\in
A_{y}^{\infty}, \label{eqn45a}%
\end{equation}
and since $M_{y}(\mathbb{D}\mathbf{u}^{\prime}\cdot\nabla_{y}\phi)=0$,
(\ref{eqn16}) finally becomes
\begin{equation}
M_{y}\left(  \left(  a\mathbb{D}\mathbf{u}\right)  \cdot\nabla_{y}\phi\right)
=0\text{,}\ \text{for all }\phi\in A_{y}^{\infty}, \label{4.2}%
\end{equation}

So for $\xi\in\mathbb{R}^{N}$ be freely fixed, we consider the cell problem:
\begin{equation}
\left\{
\begin{array}
[c]{c}%
\text{Find }\pi\left(  \xi\right)  \in\mathcal{B}_{\#A_{y}}^{1,2}%
(\mathbb{R}_{y}^{N})\text{ such that :}\\
-\overline{\operatorname{div}}_{y}\left(  a(x,\cdot)((\xi+\overline{\nabla
}_{y}\pi(\xi))\right)  =0\ \text{in }\mathbb{R}_{y}^{N}.
\end{array}
\right.  \label{eqn46}%
\end{equation}
Due to the properties of $a$, the cell problem (\ref{eqn46}) admits a unique solution.

Now choosing $\xi=\nabla u_{0}(x,t)$ in (\ref{eqn46}) and testing the
resulting equation with $\psi$ as in (\ref{eqn44}), we get by uniqueness of
solution of (\ref{eqn46}) that $u_{1}(x,t,y)=\pi(\nabla u_{0}(x,t))(y)$ \ for
a.e $\left(  x,t\right)  \in Q_{T}$.\ From which the uniqueness of $u_{1}$
defined as above and belonging to $L^{2}(Q_{T};\mathcal{B}_{\#A_{y}}%
^{1,2}(\mathbb{R}_{y}^{N}))$ and it is easy to see that
\begin{equation}
u_{1}(x,t,y)=\nabla u_{0}(x,t)\cdot\chi(x,y), \label{*}%
\end{equation}
where $\chi(x,\cdot)=(\chi_{j}(x,\cdot))_{1\leq j\leq N}$ with $\chi
_{j}(x,\cdot)=\pi(e_{j})$, $e_{j}$ being the $j$ th vector of the canonical
basis in $\mathbb{R}^{N}$.

We define the homogenized coefficients as follows:

\begin{itemize}
\item The homogenized operator associated to $P^{\varepsilon}$ $(0<\varepsilon
<1)$ is $\widetilde{P}$ and defined by
\[
\widetilde{P}=-\sum_{i,j=1}^{N}\frac{\partial}{\partial x_{i}}\left(  M\left(
a_{ij}(x,\cdot)(I_{N}+\nabla_{y}\chi(x,\cdot)))\right)  \frac{\partial
}{\partial x_{j}}\right)
\]
where $I_{N}$ is the $N\times N$ identity matrix.

\item The homogenized functions $\widetilde{f}(u_{0}^{\prime})$ and
$\widetilde{g}(u_{0}^{\prime})$ as given previously.
\end{itemize}

\begin{remark}
\label{r3.0}\emph{It is an easy exercise in showing that the functions
}$\widetilde{f}$\emph{\ and }$\widetilde{g}$\emph{\ (as functions of the
argument }$u_{0}^{\prime}\in\mathbb{R}$\emph{) satisfy assumptions similar to
those of }$f$\emph{\ and }$g$\emph{, and that }$\widetilde{P}$\emph{\ is
uniformly elliptic.}
\end{remark}

The following result is provides us with the equivalent (or homogenized) model
for which $u_{0}$ is the solution.

\begin{proposition}
\label{p2.2}The function $u_{0}$ is solution of the boundary value problem:%
\begin{equation}
\left\{
\begin{array}
[c]{c}%
du_{0}^{\prime}+\widetilde{P}u_{0}dt-\Delta u_{0}^{\prime}dt=\widetilde
{f}(u_{0}^{\prime})dt+\widetilde{g}(u_{0}^{\prime})d\overline{W}\text{ \ in
}Q_{T}\\
u_{0}=0\text{ on }\partial Q_{T}\times\left(  0,T\right)  \text{
\ \ \ \ \ \ \ \ \ \ \ \ \ \ \ \ \ \ \ \ \ \ \ \ \ \ \ \ \ \ \ }\\
u_{0}(x,0)=u^{0}\text{ and }u_{0}^{\prime}(x,0)=u^{1}\text{ in }Q.\text{
\ \ \ \ \ \ \ \ \ \ \ \ }%
\end{array}
\right.  \label{eqn}%
\end{equation}

\end{proposition}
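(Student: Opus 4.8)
The plan is to exploit the equivalent system \eqref{eqn16}--\eqref{eqn17} of the global homogenized problem \eqref{eqn15}. Equation \eqref{eqn16} has already been reduced, through the cell problem \eqref{eqn46}, to the explicit corrector formula \eqref{*}; it therefore remains to insert \eqref{*} into \eqref{eqn17}, to recognize the resulting identity as the weak (distributional) formulation of the partial differential equation in \eqref{eqn}, and finally to identify the boundary and initial conditions and to invoke uniqueness for \eqref{eqn}.

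First I would carry out the substitution in \eqref{eqn17}. Since $u_{1}(x,t,y)=\nabla u_{0}(x,t)\cdot\chi(x,y)$ with $\chi=(\chi_{k})_{1\le k\le N}$ depending only on $(x,y)$, one has
\[
\mathbb{D}_{j}\mathbf{u}=\frac{\partial u_{0}}{\partial x_{j}}+\frac{\overline{\partial}u_{1}}{\partial y_{j}}=\sum_{k=1}^{N}\Bigl(\delta_{jk}+\frac{\overline{\partial}\chi_{k}}{\partial y_{j}}\Bigr)\frac{\partial u_{0}}{\partial x_{k}},
\]
whence, taking the mean value in $y$,
\[
q\bigl(u,(\psi_{0},0)\bigr)=\int_{Q_{T}}\bigl(\widetilde{a}(x)\,\nabla u_{0}\bigr)\cdot\nabla\psi_{0}\,dxdt ,\qquad \widetilde{a}(x)=M\bigl(a(x,\cdot)(I_{N}+\nabla_{y}\chi(x,\cdot))\bigr),
\]
that is, $\widetilde{a}$ is exactly the coefficient matrix of $\widetilde{P}$, so $q(u,(\psi_{0},0))=\int_{Q_{T}}(\widetilde{P}u_{0})\psi_{0}\,dxdt$ in the weak sense. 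On the other hand, for $\mathbf{v}=(v_{0},v_{1})$ with $v_{1}\in\mathcal{B}_{\#A_{y}}^{1,2}(\mathbb{R}_{y}^{N})$ one has $M_{y}(\mathbb{D}\mathbf{v})=\nabla v_{0}$, the mean value of a $\mathcal{B}_{A_{y}}$-derivative being zero; applying this to $\mathbf{u}^{\prime}=(u_{0}^{\prime},u_{1}^{\prime})$ and using that $\nabla\psi_{0}$ does not depend on $y$, we get $\int_{Q_{T}}M(\mathbb{D}\mathbf{u}^{\prime}\cdot\nabla\psi_{0})\,dxdt=\int_{Q_{T}}\nabla u_{0}^{\prime}\cdot\nabla\psi_{0}\,dxdt$. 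Substituting these two identities into \eqref{eqn17} gives, for every $\psi_{0}\in\mathcal{C}_{0}^{\infty}(Q_{T})$ and $\overline{\mathbb{P}}$-a.s.,
\[
-\int_{Q_{T}}u_{0}^{\prime}\psi_{0}^{\prime}\,dxdt+\int_{Q_{T}}(\widetilde{a}\,\nabla u_{0})\cdot\nabla\psi_{0}\,dxdt+\int_{Q_{T}}\nabla u_{0}^{\prime}\cdot\nabla\psi_{0}\,dxdt=\int_{Q_{T}}\widetilde{f}(u_{0}^{\prime})\psi_{0}\,dxdt+\int_{Q_{T}}\widetilde{g}(u_{0}^{\prime})\psi_{0}\,dxd\overline{W},
\]
which is precisely the variational formulation, with test functions compactly supported in $Q_{T}$, of $du_{0}^{\prime}+\widetilde{P}u_{0}\,dt-\Delta u_{0}^{\prime}\,dt=\widetilde{f}(u_{0}^{\prime})\,dt+\widetilde{g}(u_{0}^{\prime})\,d\overline{W}$. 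By Remark \ref{r3.0}, $\widetilde{P}$ is uniformly elliptic and $\widetilde{f},\widetilde{g}$ enjoy linear-growth and Lipschitz bounds analogous to \textbf{(A2)}; since $u_{0}^{\prime}$ is $\overline{\mathcal{F}}_{t}$-adapted (established before Proposition \ref{p2.1}) and $|\widetilde{g}(u_{0}^{\prime})|^{2}\le C(1+|u_{0}^{\prime}|^{2})$, the last term is a well-defined It\^{o} integral against $\overline{W}$.

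It remains to recover the side conditions. From $u_{0}\in L^{2}(\overline{\Omega};L^{\infty}(0,T;H_{0}^{1}(Q)))$ (a consequence of \eqref{eqn40}) together with its time-continuity we get $u_{0}=0$ on $\partial Q\times(0,T)$. For the Cauchy data, the Skorokhod copies satisfy $u_{\varepsilon_{n}}(0)=u^{0}$ and $u_{\varepsilon_{n}}^{\prime}(0)=u^{1}$ $\overline{\mathbb{P}}$-a.s.\ for every $n$; passing to the limit in \textbf{(C2)} (convergence in $\mathcal{C}(0,T;H^{-1}(Q))$) and in \textbf{(C3)} (convergence in $\mathcal{C}(0,T;L^{2}(Q))$) yields $u_{0}(0)=u^{0}$ and $u_{0}^{\prime}(0)=u^{1}$. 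Using the time-regularity of $u_{0}$ and $u_{0}^{\prime}$ supplied by \textbf{(C2)}--\textbf{(C3)} and \eqref{eqn40}, one then upgrades the weak-in-time identity above to the integrated (strong) formulation of \eqref{eqn} in which these initial conditions appear explicitly. Finally, by Remark \ref{r3.0}, problem \eqref{eqn} is of exactly the same type as \eqref{eqnn1.1}, hence admits a unique strong solution; therefore $u_{0}$ is that solution, which a posteriori shows that the whole family, and not merely a subsequence, converges.

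The genuinely delicate step is this last upgrade: passing from the pathwise, weak-in-time identity --- valid $\overline{\mathbb{P}}$-a.s.\ for each fixed $\psi_{0}$ --- to the pathwise strong SPDE formulation of \eqref{eqn} carrying a bona fide It\^{o} integral and the prescribed initial data. This requires selecting a single $\overline{\mathbb{P}}$-null set good for all $\psi_{0}$ in a countable dense subset of $\mathcal{C}_{0}^{\infty}(Q_{T})$ and exploiting the adaptedness and time-continuity of $u_{0}^{\prime}$; by contrast, the algebraic identification of the homogenized coefficients from \eqref{*} and the definition of $\widetilde{P}$ is routine.
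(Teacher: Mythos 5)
Your proposal is correct and follows essentially the same route as the paper, whose proof consists of the single sentence "substitute $u_{1}$ from \eqref{*} into \eqref{eqn17} and choose $\psi_{0}(x,t,\omega)=\varphi(x,t)\phi(\omega)$"; you simply carry out in detail the coefficient identification $q(u,(\psi_{0},0))=\int_{Q_{T}}(\widetilde{a}\nabla u_{0})\cdot\nabla\psi_{0}\,dxdt$, the vanishing of $M_{y}(\overline{\nabla}_{y}u_{1}^{\prime})$, and the recovery of the boundary and initial data that the paper leaves implicit. No gaps.
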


\begin{proof}
Substituting in (\ref{eqn17}) $u_{1}$ by its expression given by (\ref{*}) and
choosing there $\psi_{0}(x,t,\omega)=\varphi(x,t)\phi(\omega)$, with
$\varphi\in\mathcal{C}_{0}^{\infty}(Q_{T})$ and $\phi\in B(\overline{\Omega}%
)$, we get readily (\ref{eqn}).
\end{proof}

The next result establishes the uniqueness of the solution of (\ref{eqn}) on
the same probability system.

\begin{proposition}
\label{p2.3}Let $u_{0}$ and $u_{0}^{\ast}$ be two solutions of
\emph{(\ref{eqn})} on the same probability system $(\overline{\Omega
},\overline{\mathcal{F}},\overline{\mathbb{P}},\overline{W},\overline
{\mathcal{F}^{t}})$ with the same initial conditions $u^{0}$ and $u^{1}$. Then
$u_{0}=u_{0}^{\ast}\ \overline{\mathbb{P}}$-almost surely.
\end{proposition}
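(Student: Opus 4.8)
The plan is to run the classical energy/It\^o estimate on the difference of the two solutions; it is precisely the damping term $-\Delta u_{0}'$ that makes the argument go through. First I would set $w=u_{0}-u_{0}^{\ast}$. Subtracting the two instances of (\ref{eqn}) and using that the initial data agree, $w$ solves, $\overline{\mathbb{P}}$-a.s. and in the variational sense,
\begin{equation*}
dw'+\widetilde{P}w\,dt-\Delta w'\,dt=\bigl(\widetilde{f}(u_{0}')-\widetilde{f}(u_{0}^{\ast\prime})\bigr)dt+\bigl(\widetilde{g}(u_{0}')-\widetilde{g}(u_{0}^{\ast\prime})\bigr)d\overline{W},\qquad w(0)=0,\ w'(0)=0,
\end{equation*}
with $w\in L^{2}(\overline{\Omega};L^{\infty}(0,T;H_{0}^{1}(Q)))$ and $w'\in L^{2}(\overline{\Omega};L^{\infty}(0,T;L^{2}(Q))\cap L^{2}(0,T;H_{0}^{1}(Q)))$, the last inclusion being supplied by the damping. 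I would then introduce the energy
\begin{equation*}
\mathcal{E}(t)=\|w'(t)\|_{L^{2}(Q)}^{2}+\bigl(\widetilde{P}w(t),w(t)\bigr),
\end{equation*}
which, by the uniform ellipticity and symmetry of $\widetilde{P}$ (Remark \ref{r3.0}), is comparable to $\|w'(t)\|_{L^{2}(Q)}^{2}+\|w(t)\|_{H_{0}^{1}(Q)}^{2}$.

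Applying It\^o's formula to $\|w'(t)\|_{L^{2}(Q)}^{2}$ in the Gelfand triple $H_{0}^{1}(Q)\hookrightarrow L^{2}(Q)\hookrightarrow H^{-1}(Q)$ for the velocity variable (whose distributional time derivative lies in $L^{2}(0,T;H^{-1}(Q))$ by the equation), and using symmetry of $\widetilde{P}$ to write $2(\widetilde{P}w,w')=\tfrac{d}{dt}(\widetilde{P}w,w)$, one gets
\begin{align*}
\mathcal{E}(t)&=-2\int_{0}^{t}\|\nabla w'(s)\|_{L^{2}(Q)}^{2}ds+2\int_{0}^{t}\bigl(\widetilde{f}(u_{0}')-\widetilde{f}(u_{0}^{\ast\prime}),w'\bigr)ds\\
&\quad+\int_{0}^{t}\bigl\|\widetilde{g}(u_{0}')-\widetilde{g}(u_{0}^{\ast\prime})\bigr\|_{L^{2}(Q)}^{2}ds+2\int_{0}^{t}\bigl(\widetilde{g}(u_{0}')-\widetilde{g}(u_{0}^{\ast\prime}),w'\bigr)d\overline{W},
\end{align*}
the third integral being the It\^o correction. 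I would localise with $\tau_{R}=\inf\{t\le T:\mathcal{E}(t)\ge R\}$, take $\overline{\mathbb{E}}$ (killing the martingale), discard the nonpositive damping integral, and use the Lipschitz property of $\widetilde{f},\widetilde{g}$ from Remark \ref{r3.0}, namely $|\widetilde{f}(u_{0}')-\widetilde{f}(u_{0}^{\ast\prime})|\le c|w'|$ and $|\widetilde{g}(u_{0}')-\widetilde{g}(u_{0}^{\ast\prime})|\le c|w'|$, to reach
\begin{equation*}
\overline{\mathbb{E}}\,\mathcal{E}(t\wedge\tau_{R})\le C\int_{0}^{t}\overline{\mathbb{E}}\,\|w'(s\wedge\tau_{R})\|_{L^{2}(Q)}^{2}ds\le C\int_{0}^{t}\overline{\mathbb{E}}\,\mathcal{E}(s\wedge\tau_{R})\,ds.
\end{equation*}
Gronwall's lemma gives $\overline{\mathbb{E}}\,\mathcal{E}(t\wedge\tau_{R})=0$ for every $t$; letting $R\to\infty$ (using $\sup_{[0,T]}\mathcal{E}<\infty$ a.s.) yields $\mathcal{E}\equiv0$, hence $w=0$ and $w'=0$ $\overline{\mathbb{P}}$-a.s., which is the claim.

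The step I expect to be the main obstacle is the rigorous justification of It\^o's formula for $\mathcal{E}$ at the available regularity, since the problem is second order in time and $u_{0}$ is only as smooth as the compactness passages deliver. This can be handled either by recasting the equation as a first-order system in $(w,w')$ and invoking the It\^o formula in the variational/Gelfand-triple framework (Pardoux, Krylov--Rozovskii), or by performing the computation on a Galerkin approximation and passing to the limit. It is exactly at this point that the damping term is useful: it provides the $L^{2}(0,T;H_{0}^{1}(Q))$ bound on $w'$ needed for these arguments, while everything else reduces to the routine Lipschitz-plus-Gronwall estimate above.
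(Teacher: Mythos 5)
Your proof is correct and follows essentially the same route as the paper: apply It\^o's formula to the energy of the difference $w=u_{0}-u_{0}^{\ast}$, discard the nonnegative damping contribution, use the Lipschitz properties of $\widetilde{f}$ and $\widetilde{g}$ from Remark \ref{r3.0}, and conclude by Gronwall after taking expectation. The only differences are cosmetic or are refinements the paper omits --- you work with the elliptic form $(\widetilde{P}w,w)$ rather than directly with $\left\Vert w\right\Vert _{H_{0}^{1}(Q)}^{2}$ (equivalent by uniform ellipticity), and you add the stopping-time localisation and the Galerkin/Gelfand-triple justification of It\^o's formula, which the paper passes over silently.
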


\begin{proof}
Choosing\textbf{\ }$w_{0}=u_{0}-u_{0}^{\ast}$, we have%
\[
\left\{
\begin{array}
[c]{c}%
dw_{0}^{\prime}+\widetilde{P}u_{0}dt-\Delta w_{0}^{\prime}dt=(\widetilde
{f}(u_{0}^{\prime})-\widetilde{f}(u_{0}^{\ast\prime}))dt+(\widetilde{g}%
(u_{0}^{\prime})-\widetilde{g}(u_{0}^{\ast\prime}))d\overline{W}\\
w_{0}(x,0)=0,\text{ }w_{0}^{\prime}(x,0)=0.
\end{array}
\right.
\]
Applying Ito's formula and integrating over $[0,t]$ yield%

\begin{align}
\left\Vert w_{0}(t)\right\Vert _{H_{0}^{1}\left(  Q\right)  }^{2}+\left\Vert
w_{0}^{\prime}(t)\right\Vert _{L^{2}\left(  Q\right)  }^{2}+\int_{0}%
^{t}\left\Vert w_{0}^{\prime}(t)\right\Vert _{H_{0}^{1}\left(  Q\right)  }%
^{2}dt  &  \leq\int_{0}^{t}\left\Vert \widetilde{g}(u_{0}^{\prime
}(s))-\widetilde{g}(u_{0}^{\ast\prime}(s))\right\Vert _{L^{2}(Q)^{m}}%
^{2}ds\label{eqn49}\\
&  +2\int_{0}^{t}(\widetilde{f}(u_{0}^{\prime}(s)-\widetilde{f}(u_{0}%
^{\ast\prime}(s)),w_{0}^{\prime}(s))ds\nonumber\\
&  +2\int_{0}^{t}(\widetilde{g}(u_{0}^{\prime}(s))-\widetilde{g}(u_{0}%
^{\ast\prime}(s)),w_{0}^{\prime}(s))d\overline{W}\left(  s\right)  .\nonumber
\end{align}

Hence%
\begin{align}
2\int_{0}^{t}(\widetilde{f}(u_{0}^{\prime}(s))-\widetilde{f}(u_{0}^{\ast
\prime}(s)),w_{0}^{\prime}(s))ds  &  \leq C\int_{0}^{t}\left(  \left\Vert
\widetilde{f}(u_{0}^{\prime}(s))-\widetilde{f}(u_{0}^{\ast\prime
}(s))\right\Vert _{L^{2}(Q)}^{2}+\left\Vert w_{0}^{\prime}(s)\right\Vert
_{L^{2}(Q)}^{2}\right)  ds\nonumber\\
&  \leq C\int_{0}^{t}\left(  \left\Vert w_{0}(s)\right\Vert _{H_{0}^{1}%
(Q)}^{2}+\left\Vert w_{0}^{\prime}(s)\right\Vert _{L^{2}(Q)}^{2}\right)  ds.
\label{eqn50'}%
\end{align}

It follows that%
\begin{equation}
\int_{0}^{t}\left\Vert \widetilde{g}(u_{0}^{\prime}(s))-\widetilde{g}%
(u_{0}^{\ast\prime}(s))\right\Vert _{L^{2}(Q)^{m}}^{2}ds\leq C\int_{0}%
^{t}\left(  \left\Vert w_{0}(s)\right\Vert _{H_{0}^{1}(Q)}^{2}+\left\Vert
w_{0}^{\prime}(s)\right\Vert _{L^{2}(Q)}^{2}\right)  ds \label{eqn51}%
\end{equation}
Taking the mathematical expectation in (\ref{eqn49}), we appeal to the
assumptions on $g$ and combine (\ref{eqn50'}) with (\ref{eqn51}) to get%

\[
\overline{\mathbb{E}}\left(  \left\Vert w_{0}(t)\right\Vert _{H_{0}^{1}\left(
Q\right)  }^{2}+\left\Vert w_{0}^{\prime}(t)\right\Vert _{L^{2}\left(
Q\right)  }^{2}\right)  \leq C\overline{\mathbb{E}}\int_{0}^{t}(\left\Vert
w_{0}(s)\right\Vert _{H_{0}^{1}(Q)}^{2}+\left\Vert w_{0}^{\prime
}(s)\right\Vert _{L^{2}(Q)}^{2})ds\text{.}%
\]
It emerges from Gronwall's lemma that $w_{0}=0\ \overline{\mathbb{P}}$-almost surely.
\end{proof}

\begin{remark}
\emph{The pathwise uniqueness result in proposition \ref{p2.3} and
Yamada-Watanabe's Theorem \cite{9} yield the existence of unique strong
probabilistic solution of (\ref{eqn}) on a prescribed probabilistic system
}$(\Omega,\mathcal{F},\mathbb{P}$\emph{,}$W,\mathcal{F}^{t})$\emph{.}
\end{remark}

The aim of the rest of this section is to prove the following homogenization\ result.

\begin{theorem}
\label{t2.6}For each $\varepsilon>0$ let $u_{\varepsilon}$ be the unique
solution of \emph{(\ref{eqnn1.1})} on a given stochastic system $\left(
\Omega,\mathcal{F},\mathbb{P},W,\mathcal{F}^{t}\right)  $ defined as in
Section \emph{1}. Under the assumptions (\textbf{A1}), (\textbf{A2}) and
(\textbf{A3}), the sequence $(u_{\varepsilon})$ converges in probability to
$u_{0}$ in $L^{2}\left(  Q_{T}\right)  $, where $u_{0}$ is the unique strong
probabilistic solution of the following problem:%
\begin{equation}
\left\{
\begin{array}
[c]{c}%
du_{0}^{\prime}+\widetilde{P}u_{0}dt-\Delta u_{0}^{\prime}dt=\widetilde
{f}(u_{0}^{\prime})dt+\widetilde{g}(u_{0}^{\prime})dW\text{ in }Q_{T}\\
u_{0}=0\text{ on }\partial Q\times\left(  0,T\right)  \text{
\ \ \ \ \ \ \ \ \ \ \ \ \ \ \ \ \ \ \ \ \ \ \ \ \ \ \ \ \ \ }\\
u_{0}\left(  x,0\right)  =u^{0}\text{ and }u_{0}^{\prime}\left(  x,0\right)
=u^{1}\text{ in }Q.\text{ \ \ \ \ \ \ \ \ \ }%
\end{array}
\right.  \label{eqn43}%
\end{equation}

\end{theorem}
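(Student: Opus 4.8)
The plan is to combine the results already established in this section. We begin by observing that, by the a~priori estimates (\ref{eqe})--(\ref{eqee}), the family of laws $(\pi^{\varepsilon})$ on $S$ is tight, so Prokhorov's theorem yields a weakly convergent subsequence $(\pi^{\varepsilon_{n}})$, and Skorokhod's representation theorem provides a new probability space $(\overline{\Omega},\overline{\mathcal{F}},\overline{\mathbb{P}})$ carrying random variables $(W_{\varepsilon_{n}},u_{\varepsilon_{n}},u_{\varepsilon_{n}}^{\prime})$ and $(\overline{W},u_{0},u_{0}^{\prime})$ with the convergence properties (\textbf{C1})--(\textbf{C4}). On this new space, $u_{\varepsilon_{n}}$ still satisfies the variational equation (\ref{eqn32}), the estimates (\ref{eqn40})--(\ref{eqn40'}) persist, and Vitali's theorem upgrades the almost sure convergences in (\textbf{C2})--(\textbf{C3}) to the strong convergences (\ref{eqn41}), hence (\ref{eqn42}). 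This is exactly the setup under which Proposition \ref{p2.1} applies, giving that $(u_{0},u_{1})$ solves the global variational problem (\ref{eqn15}), which (as shown in Section 3.2) decouples into the cell problem determining $u_{1}=\pi(\nabla u_{0})$ and the effective problem; by Proposition \ref{p2.2}, $u_{0}$ solves (\ref{eqn}) on the system $(\overline{\Omega},\overline{\mathcal{F}},\overline{\mathbb{P}},\overline{W},\overline{\mathcal{F}}_{t})$. Thus the limit $u_{0}$ is a weak (martingale) solution of (\ref{eqn43}).

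Next I would invoke uniqueness: Proposition \ref{p2.3} gives pathwise uniqueness for (\ref{eqn}), and together with the Yamada--Watanabe theorem \cite{9} this yields existence of a unique strong probabilistic solution of (\ref{eqn43}) on the \emph{prescribed} system $(\Omega,\mathcal{F},\mathbb{P},W,\mathcal{F}^{t})$. In particular the law of the solution is uniquely determined, independently of the extracted subsequence.

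The final step is to promote convergence in law to convergence in probability. The standard device here is the Gy\"ongy--Krylov characterization: a sequence $X_{\varepsilon}$ converges in probability in a Polish space if and only if from every pair of subsequences one can extract a further subsequence along which the joint laws of the pairs converge weakly to a measure supported on the diagonal. So I would take two arbitrary subsequences of $(u_{\varepsilon})$, apply the tightness/Skorokhod argument above to the \emph{joint} sequence $(W,u_{\varepsilon^{1}},u_{\varepsilon^{1}}',u_{\varepsilon^{2}},u_{\varepsilon^{2}}')$ to obtain limit points $u_{0}^{1}$ and $u_{0}^{2}$, each of which is (by the argument of the previous paragraphs) a solution of the homogenized problem on the same limiting stochastic basis driven by the same Wiener process. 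Pathwise uniqueness (Proposition \ref{p2.3}) then forces $u_{0}^{1}=u_{0}^{2}$ $\overline{\mathbb{P}}$-a.s., i.e.\ the joint limit law is carried by the diagonal of $L^{2}(Q_{T})\times L^{2}(Q_{T})$. By Gy\"ongy--Krylov, $u_{\varepsilon}\to u_{0}$ in probability in $L^{2}(Q_{T})$, and since the limit is the unique strong solution on the original system, this $u_{0}$ is the one announced.

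I expect the main obstacle to be the last step: one must be careful that when running Skorokhod's theorem on the joint (doubled) sequence, \emph{both} components $u_{0}^{1}$ and $u_{0}^{2}$ inherit the convergence (\ref{eqn42}) and solve (\ref{eqn}) with respect to one and the same driving Wiener process $\overline{W}$ and one and the same filtration, so that Proposition \ref{p2.3} is genuinely applicable to their difference; this is where the identification of $\overline{W}$ as an $\overline{\mathcal{F}}_{t}$-Wiener process and the passage-to-the-limit machinery of Lemmas \ref{l3.2} and \ref{l4.1} must be carried over verbatim to the doubled setting. Everything else is a matter of assembling the pieces already proved.
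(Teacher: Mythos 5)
Your proposal is correct and follows essentially the same route as the paper: the paper's Lemma \ref{l4.2} is precisely the Gy\"ongy--Krylov-type criterion you invoke, and the paper likewise forms the doubled joint laws $\nu^{\varepsilon,\varepsilon'}=\pi^{\varepsilon}\times\pi^{\varepsilon'}\times\pi^{W}$, proves their tightness (Lemma \ref{l4.3}), applies Skorokhod to get two limits solving the homogenized problem on one stochastic basis, and concludes via the pathwise uniqueness of Proposition \ref{p2.3} that the limit law sits on the diagonal. The subtlety you flag at the end --- that both components must be driven by the same $\overline{W}$ and filtration --- is exactly what the inclusion of $\pi^{W}$ in the joint law and the choice of $\overline{\mathcal{F}^{t}}$ as the filtration generated by the full triple are designed to secure.
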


The proof of this theorem uses the pathwise uniqueness for (\ref{eqn}) in
conjunction with the following two lemmas.

\begin{lemma}
[\cite{6}]\label{l4.2}Let $X$ be a polish space equipped with its Borel
$\sigma$-algebra. A sequence of $X$-valued random variables $\left\{
Y_{n},\text{ }n\in\mathbb{N}\right\}  $ converges in probability if and only
if for every joint laws $\left\{  \mu_{n_{k}},~k\in\mathbb{N}\right\}  $,
there exist a further subsequence which converges weakly to a probability
measure $\mu$ such that
\[
\mu((x,y)\in X\times X:x=y)=1.
\]

\end{lemma}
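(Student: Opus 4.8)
The plan is to prove the two directions of this Gyöngy--Krylov characterization separately. First I would fix a complete metric $d$ on the Polish space $X$ and, replacing $d$ by $d\wedge 1$ if necessary, assume $d\le 1$; then convergence in probability of $X$-valued random variables is metrized by $\rho(U,V)=\mathbb{E}[d(U,V)]$, and the space of equivalence classes of $X$-valued random variables is complete under $\rho$. Throughout I read the stated condition in its standard form: for any two subsequences $(Y_{m_k})$, $(Y_{n_k})$, the joint laws $\mu_k$ of the pairs $(Y_{m_k},Y_{n_k})$ on $X\times X$ admit a weakly convergent further subsequence whose limit $\mu$ charges the diagonal, $\mu(\{(x,y):x=y\})=1$.

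For the necessity I would argue directly. Assuming $Y_n\to Y$ in probability, any two subsequences $(Y_{m_k})$ and $(Y_{n_k})$ each converge in probability to $Y$, so the pair $(Y_{m_k},Y_{n_k})$ converges in probability to $(Y,Y)$ in $X\times X$; since convergence in probability entails convergence in law, the joint laws converge weakly to the law $\mu$ of $(Y,Y)$, which is supported on the diagonal because $(Y,Y)$ always lies there. No passage to a further subsequence is needed here.

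For the sufficiency I would argue by contraposition. If $(Y_n)$ fails to converge in probability, then by completeness of $\rho$ it is not $\rho$-Cauchy, and a Markov-inequality estimate (using $d\le 1$) produces $\varepsilon>0$, $\delta>0$ and subsequences $m_k,n_k\to\infty$ with
\[
\mathbb{P}\bigl(d(Y_{m_k},Y_{n_k})>\varepsilon\bigr)\ge\delta\qquad\text{for all }k.
\]
Applying the hypothesis to these two subsequences yields a further subsequence, not relabeled, along which the joint laws $\mu_k$ of $(Y_{m_k},Y_{n_k})$ converge weakly to a probability measure $\mu$ with full mass on the diagonal. Since $D_\varepsilon=\{(x,y):d(x,y)\ge\varepsilon\}$ is closed and contained in $\{x\neq y\}$, the Portmanteau theorem gives
\[
\delta\le\limsup_k\mu_k(D_\varepsilon)\le\mu(D_\varepsilon)\le\mu(\{x\neq y\})=0,
\]
a contradiction; hence $(Y_n)$ converges in probability.

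The main obstacle is the sufficiency direction: it rests on the completeness of convergence in probability on the complete separable target $X$ (to extract the two ``separated'' subsequences from the failure of convergence) and on applying the Portmanteau inequality to the closed set $D_\varepsilon$ rather than to the open set $\{d>\varepsilon\}$, so that the diagonal concentration of $\mu$ forces $\mu(D_\varepsilon)=0$. The necessity direction is routine, being only the implication ``convergence in probability $\Rightarrow$ convergence in law'' combined with the diagonal support of the law of $(Y,Y)$.
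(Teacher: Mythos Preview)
Your argument is correct and is essentially the standard Gy\"ongy--Krylov proof. The necessity direction is immediate, and in the sufficiency direction your contraposition works: the failure of the $\rho$-Cauchy property indeed yields, via the splitting $\mathbb{E}[d]\le \mathbb{P}(d>\varepsilon)+\varepsilon$ (using $d\le 1$), subsequences with $\mathbb{P}(d(Y_{m_k},Y_{n_k})>\varepsilon)\ge\delta$, and your use of the closed set $D_\varepsilon=\{d\ge\varepsilon\}$ in the Portmanteau inequality is exactly what is needed to force the contradiction with the diagonal concentration of $\mu$.

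As for the comparison: the paper does not supply a proof of this lemma at all. It is stated as a quoted result from the literature (reference \cite{6}) and used as a black box in the proof of Theorem~\ref{t2.6}. So there is no ``paper's approach'' to compare against; you have simply filled in a proof that the authors chose to cite rather than reproduce. Your write-up is self-contained and would serve well if the paper wanted to include the argument explicitly.
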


Consider $X=L^{2}(0,T;L^{2}(Q))\cap\mathcal{C}(0,T;H^{-1}(Q))\times
\mathcal{C}(0,T;H^{-1}(Q))$, $X_{1}=\mathcal{C}(0,T;\mathbb{R}^{m})$,
$X_{2}=X\times X\times X_{1}$. For $S\in\mathcal{B}(X)$, we set $\pi
^{\varepsilon}(S)=\mathbb{P}((u_{\varepsilon},u_{\varepsilon}^{\prime})\in
S)$. For $S\in\mathcal{B}(X_{1})$, we set $\pi^{W}=\mathbb{P}(W\in S) $.

We define the joint probability laws as%
\[
\pi^{\varepsilon,\varepsilon^{\prime}}=\pi^{\varepsilon}\times\pi
^{\varepsilon^{\prime}},
\]%
\[
\nu^{\varepsilon,\varepsilon^{\prime}}=\pi^{\varepsilon}\times\pi
^{\varepsilon^{\prime}}\times\pi^{W}.
\]

\begin{lemma}
\label{l4.3} The family $\{\nu^{\varepsilon,\varepsilon^{\prime}}%
:\varepsilon,\varepsilon^{\prime}>0\}$ is tight on $(X_{2},\mathcal{B}(X_{2}))
$.
\end{lemma}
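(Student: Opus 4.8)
The plan is to prove tightness of $\{\nu^{\varepsilon,\varepsilon'}\}$ on the product space $X_2 = X\times X\times X_1$ by reducing it to tightness of each factor, which has essentially already been established. Recall that a finite product of tight families is tight: if $K_\delta^{(1)}$, $K_\delta^{(2)}$, $K_\delta^{(3)}$ are compact sets in $X$, $X$, $X_1$ respectively with $\pi^\varepsilon(X\setminus K_\delta^{(i)})<\delta/3$ uniformly in $\varepsilon$, then $K_\delta := K_\delta^{(1)}\times K_\delta^{(2)}\times K_\delta^{(3)}$ is compact in $X_2$ and $\nu^{\varepsilon,\varepsilon'}(X_2\setminus K_\delta)<\delta$ for all $\varepsilon,\varepsilon'$, since the coordinates of $\nu^{\varepsilon,\varepsilon'}$ are independent with marginals $\pi^\varepsilon$ (first slot), $\pi^{\varepsilon'}$ (second slot) and $\pi^W$ (third slot). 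So I would first state this elementary product fact, then verify tightness of each marginal family.

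First I would handle the third coordinate: $\pi^W$ is a single probability measure on the Polish space $X_1=\mathcal{C}(0,T;\mathbb{R}^m)$ (it is just the law of the fixed Wiener process $W$), and a single Borel probability measure on a Polish space is automatically tight (inner regularity), so $\{\pi^W\}$ is trivially a tight family. For the first and second coordinates, observe that $X = (L^2(0,T;L^2(Q))\cap\mathcal{C}(0,T;H^{-1}(Q)))\times\mathcal{C}(0,T;H^{-1}(Q))$ and $\pi^\varepsilon$ is the law of the pair $(u_\varepsilon, u_\varepsilon')$. The tightness of $\{\pi^\varepsilon\}$ on $X$ is exactly the tightness statement already invoked in the Introduction for the family $(\pi^\varepsilon)$ on $S$ — indeed $S = \mathcal{C}(0,T;\mathbb{R}^m)\times X$, and one proves it "as in \cite[Lemma 7]{7'}" using the a priori estimates (\ref{eqe}) and (\ref{eqee}): the bound $\mathbb{E}\sup_t\|u_\varepsilon(t)\|_{H_0^1}^4 + \mathbb{E}\sup_t\|u_\varepsilon'(t)\|_{L^2}^4\le C$ gives, via Markov's inequality, uniform control in the balls of $L^\infty(0,T;H_0^1(Q))$ and $L^\infty(0,T;L^2(Q))$, while the time-translation estimate (\ref{eqee}) combined with the compact embeddings $H_0^1(Q)\hookrightarrow L^2(Q)\hookrightarrow H^{-1}(Q)$ and an Aubin–Lions / Arzelà–Ascoli type argument (for the $\mathcal{C}(0,T;H^{-1}(Q))$ component, and a Simon-type compactness lemma for the $L^2(0,T;L^2(Q))$ component) produces, for each $\delta>0$, a compact subset of $X$ carrying mass at least $1-\delta$ uniformly in $\varepsilon$. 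The same argument applies verbatim to $\pi^{\varepsilon'}$.

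Assembling the pieces: given $\delta>0$, pick a compact $K^X_\delta\subset X$ with $\sup_\varepsilon \pi^\varepsilon(X\setminus K^X_\delta)<\delta/3$ (the same set works for the family indexed by $\varepsilon'$), and pick a compact $K^{X_1}_\delta\subset X_1$ with $\pi^W(X_1\setminus K^{X_1}_\delta)<\delta/3$. Then $K_\delta := K^X_\delta\times K^X_\delta\times K^{X_1}_\delta$ is compact in $X_2$ and a union bound gives $\nu^{\varepsilon,\varepsilon'}(X_2\setminus K_\delta)\le \pi^\varepsilon(X\setminus K^X_\delta)+\pi^{\varepsilon'}(X\setminus K^X_\delta)+\pi^W(X_1\setminus K^{X_1}_\delta)<\delta$ for all $\varepsilon,\varepsilon'>0$, which is precisely tightness of $\{\nu^{\varepsilon,\varepsilon'}\}$ on $(X_2,\mathcal{B}(X_2))$. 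I do not expect a genuine obstacle here: the only nontrivial input is the per-marginal tightness of $(\pi^\varepsilon)$, which the paper has already asserted and attributed to the method of \cite[Lemma 7]{7'}; the rest is the standard fact that finite products and single measures on Polish spaces are tight. The one point requiring a little care is making sure the compact set $K^X_\delta$ can be chosen \emph{uniformly} in $\varepsilon$ (so that it simultaneously serves both the $\varepsilon$- and $\varepsilon'$-indexed marginals), but this is immediate because the a priori constants $C$ in (\ref{eqe})–(\ref{eqee}) are independent of $\varepsilon$.
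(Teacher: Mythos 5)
Your proof is correct and follows essentially the same route as the paper, which simply notes that the argument is the one of \cite[Lemma 7]{7'}: the only substantive input is the uniform-in-$\varepsilon$ tightness of the marginals $(\pi^{\varepsilon})$, obtained from the a priori estimates (\ref{eqe})--(\ref{eqee}) and the compactness argument of that reference. Your explicit reduction of tightness of the product measures $\nu^{\varepsilon,\varepsilon'}=\pi^{\varepsilon}\times\pi^{\varepsilon'}\times\pi^{W}$ to tightness of each factor (together with Ulam's theorem for the single measure $\pi^{W}$) is exactly what the paper leaves implicit.
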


\begin{proof}
The proof of this lemma is similar to that of \cite[Lemma 7]{7'}.
\end{proof}

We now have all the ingredients to prove Theorem \ref{t2.6}.

\begin{proof}
[Proof of Theorem \ref{t2.6}]Thanks to Lemma \ref{l4.3} and Skorokhod's
theorem \cite{1} we infer the existence of a subsequence from $\{\nu
^{\varepsilon_{j},\varepsilon_{j}^{\prime}}\}$ still denoted by $\{\nu
^{\varepsilon_{j},\varepsilon_{j}^{\prime}}:\varepsilon,\varepsilon^{\prime
}>0\}$ which converges to a probability measure $\nu$ on $(X_{2}%
,\mathcal{B}(X_{2}))$ and there exists a probability space $(\overline{\Omega
},\overline{\mathcal{F}},\overline{\mathbb{P}})$ on which the sequence
$((u_{\varepsilon_{j}},u_{\varepsilon_{j}}^{\prime}),(u_{\varepsilon
_{j}^{\prime}},u_{\varepsilon_{j}^{\prime}}^{\prime}),W^{j})$ is defined and
converges almost surely in $X_{2}$ to a couple of random variables
$((u_{0},u_{0}^{\prime}),(v_{0},v_{0}^{\prime}),\overline{W})$. We also have
\[
\mathcal{L}((u_{\varepsilon_{j}},u_{\varepsilon_{j}}^{\prime}),(u_{\varepsilon
_{j}^{\prime}},u_{\varepsilon_{j}^{\prime}}^{\prime}),W^{j})=\nu
^{\varepsilon_{j},\varepsilon_{j}^{\prime}}%
\]%
\[
\mathcal{L}((u_{0},u_{0}^{\prime}),(v_{0},v_{0}^{\prime}),\overline{W})=\nu.
\]
We set
\begin{align*}
Z_{j}^{u_{\varepsilon},u_{\varepsilon}^{\prime}}  &  =((u_{\varepsilon_{j}%
},u_{\varepsilon_{j}}^{\prime}),W^{j})\text{ and }Z_{j}^{u_{\varepsilon_{j}%
},u_{\varepsilon_{j}}^{\prime}}=((u_{\varepsilon_{j}^{\prime}},u_{\varepsilon
_{j}^{\prime}}^{\prime}),W^{j}),\\
Z^{(u_{0},u_{0}^{\prime})}  &  =((u_{0},u_{0}^{\prime}),\overline{W})\text{
and }Z^{(v_{0},v_{0}^{\prime})}=((v_{0},v_{0}^{\prime}),\overline{W})\text{.}%
\end{align*}
It follows from the above argument that $\pi^{\varepsilon_{j},\varepsilon
_{j}^{\prime}}$ converges to a measure $\pi$ such that%
\[
\pi(\cdot)=\overline{\mathbb{P}}((u_{0},u_{0}^{\prime}),(v_{0},v_{0}^{\prime
})\in\cdot).
\]
It is an easy task to see that $Z_{j}^{u_{\varepsilon},u_{\varepsilon}%
^{\prime}}$ and $Z_{j}^{u_{\varepsilon_{j}},u_{\varepsilon_{j}}^{\prime}}$
satisfy (\ref{eqn32}) and that $Z^{\left(  u_{0},u_{0}^{\prime}\right)  }$ and
$Z^{\left(  v_{0},v_{0}^{\prime}\right)  }$ satisfy (\ref{eqnn1.1}) on the
same stochastic system $(\overline{\Omega},\overline{\mathcal{F}}%
,\overline{\mathbb{P}},\overline{W},\overline{\mathcal{F}^{t}})$ where
$\overline{\mathcal{F}^{t}}$ is the filtration generated by the couple
$((u_{0},u_{0}^{\prime}),(v_{0},v_{0}^{\prime}),\overline{W})$. We deduce from
the uniqueness result (see Proposition \ref{p2.3}) that $u_{0}=v_{0}$ in
$L^{2}(Q_{T}),u_{0}^{\prime}=v_{0}^{\prime}$ in $L^{2}(0,T;H^{-1}(Q))$.

Hence we have%
\begin{align}
&  \pi((x,y),(x^{\prime},y^{\prime})\in X\times X:(x,y)=(x^{\prime},y^{\prime
}))\label{eqn48}\\
&  =\overline{\mathbb{P}}((u_{0},u_{0}^{\prime})=(v_{0},v_{0}^{\prime})\text{
in }X)\nonumber\\
&  =1.\nonumber
\end{align}
\ Thanks to (\ref{eqn48}) and Lemma \ref{l4.3} we conclude that the original
sequence $(u_{\varepsilon},u_{\varepsilon}^{\prime})$ defined on the original
probability space $(\Omega,\mathcal{F},\mathbb{P},W,\mathcal{F}^{t}) $
converges in probability to $(u_{0},u_{0}^{\prime})$ in the topology of $X$,
which implies that the sequence $(u_{\varepsilon})$ converges in probability
to $u_{0}$ in $L^{2}(Q_{T})$ and $(u_{\varepsilon}^{\prime})$ converges in
probability to $u_{0}^{\prime}$ in $L^{2}(0,T;H^{-1}(Q))$. By the passage to
the limit as in the previous subsection we can show that$\ u_{0}$ is the
unique strong solution of (\ref{eqn43}).
\end{proof}

\section{Some applications of Theorem\textbf{\ }\ref{t2.6}}

We have made a fundamental assumption (\textbf{A3}) under which the multiscale
analysis of (\ref{eqnn1.1}) has been made possible. Here we give some concrete
situations in which this holds true.

\begin{problem}
[\textbf{Periodic setting}]\label{prob5.3}\emph{We assume here that
}$a(x,\cdot)$\emph{, }$f(\cdot,\cdot,v)$\emph{\ and }$g(\cdot,\cdot
,v)$\emph{\ are periodic with period }$1$\emph{\ in each coordinate for each
}$x\in\overline{Q}$\emph{\ and }$v\in\mathbb{R}$\emph{. The appropriate
algebras with mean value here are }$A_{y}=\mathcal{C}_{per}(Y)$\emph{\ and
}$A_{\tau}=\mathcal{C}_{per}(\mathcal{T})$\emph{\ and so }$A=\mathcal{C}%
_{per}(Y\times\mathcal{T})$\emph{\ where }$Y=(0,1)^{N}$\emph{\ and
}$\mathcal{T}=(0,1)$\emph{. Where }$\mathcal{C}_{per}(Y)$ $($\emph{resp
}$\mathcal{C}_{per}(\mathcal{T})$\emph{\ and }$\mathcal{C}_{per}%
(Y\times\mathcal{T}))$\emph{\ is the Banach algebra of continuous }%
$Y$\emph{-periodic functions defined on }$\mathbb{R}^{N}$\emph{\ (resp on
}$\mathbb{R}$\emph{\ and }$\mathbb{R}^{N+1}$\emph{). We therefore have
}$B_{A_{y}}^{p}(\mathbb{R}^{N})=L_{per}^{p}(Y)$\emph{, }$B_{A}^{p}%
(\mathbb{R}^{N+1})=L_{per}^{p}(Y\times\mathcal{T})$\emph{\ for }$1\leq
p\leq\infty$\emph{\ and }$B_{\#A_{y}}^{1,2}(\mathbb{R}^{N})=H_{\#}%
^{1}(Y)=\left\{  u\in H_{per}^{1}(Y):\int_{Y}udy=0\right\}  $\emph{. The
homogenized coefficients are defined as follows.}%
\begin{align*}
\widetilde{P}  &  =-\sum_{i,j=1}^{N}\frac{\partial}{\partial x_{i}}\left(
\int_{Y}\left(  a_{ij}(x,\cdot)(I_{N}+\nabla_{y}\chi(x,\cdot)))\right)
\frac{\partial}{\partial x_{j}}\right) \\
\widetilde{f}(v)  &  =\iint_{Y\times\mathcal{T}}f(y,\tau,v)dyd\tau\text{
\emph{and} }\widetilde{g}(v)=\left(  \iint_{Y\times\mathcal{T}}\left\vert
g(y,\tau,v)\right\vert ^{2}dyd\tau\right)  ^{\frac{1}{2}}%
\end{align*}
\emph{where }$I_{N}$\emph{\ is the }$N\times N$\emph{\ identity matrix, and
}$\chi=(\chi_{j})_{1\leq j\leq N}$\emph{\ is the solution of the cell problem
}%
\[
\left\{
\begin{array}
[c]{l}%
\chi_{j}(x,\cdot)\in H_{\#}^{1}(Y):\\
-\operatorname{div}_{y}(a(x,\cdot)(e_{j}+\nabla_{y}\chi_{j}(x,\cdot
)))=0\text{\emph{\ in }}Y.
\end{array}
\right.
\]
\emph{This problem has been considered in \cite{7'}, but with a linear
operator not involving the damping term.}
\end{problem}

\begin{problem}
[\textbf{Almost periodic setting}]\label{prob5.1}\emph{We assume that the
function }$a(x,\cdot),$\emph{\ }$f(\cdot,\cdot,v)$\emph{\ and }$g(\cdot
,\cdot,v)$\emph{\ are Besicovitch almost periodic \cite{2''} , for any }%
$x\in\overline{Q}$ \emph{and} $v\in\mathbb{R}$\emph{. Then we perform the
homogenization of (\ref{eqnn1.1}) with }$A_{y}=AP(\mathbb{R}^{N})$\emph{,
}$A_{\tau}=AP(\mathbb{R})$\emph{\ and so }$A=AP(\mathbb{R}^{N+1}%
)$\emph{\ where }$AP(\mathbb{R}^{N})$\emph{\ \cite{2}\ is the algebra of Bohr
almost periodic functions on }$\mathbb{R}^{N}$\emph{. Let us remind that the
mean value of a function }$u\in AP(\mathbb{R}^{N})$\emph{\ is the unique
constant belonging to the close convex hull of the family of the translates
}$(u(\cdot+a))_{a\in\mathbb{R}^{N}}$\emph{\ of }$u$\emph{.}
\end{problem}

\begin{problem}
\label{prob5.2}\emph{We consider the space }$\mathcal{B}_{\infty}%
(\mathbb{R};X)$\emph{\ of all continuous functions }$\psi\in\mathcal{C}\left(
\mathbb{R};X\right)  $\emph{\ such that }$\psi(\zeta)$\emph{\ has a limit in
}$X$\emph{\ as }$\left\vert \zeta\right\vert \rightarrow+\infty$\emph{, where
}$X$\emph{\ is a Banach space}.\emph{\ The space }$\mathcal{B}_{\infty}\left(
\mathbb{R}^{N};\mathbb{R}\right)  =\mathcal{B}_{\infty}\left(  \mathbb{R}%
^{N}\right)  $\emph{\ is an algebra with mean value on }$\mathbb{R}^{N}%
$\emph{. Then we perform the homogenization of (\ref{eqnn1.1}) under the
assumption that:} $a(x,\cdot)\in L_{per}^{2}(Y)^{N\times N}$\emph{\ for any
}$x\in\overline{Q}$\emph{, }$f(\cdot,\cdot,v)$\emph{, }$g(\cdot,\cdot,v)\in
B_{\infty}(\mathbb{R}_{\tau};L_{per}^{2}(Y))$\emph{\ for all }$v\in
\mathbb{R},$ \emph{where }$A_{y}=\mathcal{C}_{per}(Y)$\emph{\ and }$A_{\tau
}=\mathcal{B}_{\infty}(\mathbb{R}_{\tau}),$ \emph{hence}\ $A=\mathcal{B}%
_{\infty}(\mathbb{R}_{\tau})\odot\mathcal{C}_{per}(Y)=\mathcal{B}_{\infty
}(\mathbb{R}_{\tau};\mathcal{C}_{per}(Y))$\emph{.}
\end{problem}

\section{Appendix A1}

\begin{lemma}
\label{l2.5}Under the assumptions (\textbf{A1})-(\textbf{A2}), the solution
$u_{\varepsilon}$ of the problem \emph{(\ref{eqnn1.1})} satisfies the
following estimates%
\begin{equation}
\mathbb{E}\sup_{0\leq t\leq T}\left\Vert u_{\varepsilon}(t)\right\Vert
_{H_{0}^{1}\left(  Q\right)  }^{2}+\mathbb{E}\sup_{0\leq t\leq T}\left\Vert
u_{\varepsilon}^{\prime}(t)\right\Vert _{L^{2}\left(  Q\right)  }%
^{2}+\mathbb{E}\int_{0}^{T}\left\Vert u_{\varepsilon}^{\prime}(t)\right\Vert
_{H_{0}^{1}\left(  Q\right)  }^{2}dt\leq C\text{.} \label{eqn10aa}%
\end{equation}%
\begin{equation}
\text{ }\mathbb{E}\sup_{0\leq t\leq T}\left\Vert u_{\varepsilon}(t)\right\Vert
_{L^{2}\left(  Q\right)  }^{2}\text{, }\mathbb{E}\sup_{0\leq t\leq
T}\left\Vert u_{\varepsilon}(t)\right\Vert _{H_{0}^{1}\left(  Q\right)  }%
^{4}+\mathbb{E}\sup_{0\leq t\leq T}\left\Vert u_{\varepsilon}^{\prime
}(t)\right\Vert _{L^{2}\left(  Q\right)  }^{4}\leq C\text{.} \label{eqn11}%
\end{equation}
where $C$ is a positive constant independent of $\varepsilon$.
\end{lemma}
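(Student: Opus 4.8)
The strategy is the classical energy method for stochastic wave equations, applied carefully to exploit the damping term $-\Delta u_\varepsilon'$. I will first establish the lower-order estimate \eqref{eqn10aa}, then bootstrap to the fourth-power bounds in \eqref{eqn11}; throughout, $C$ will denote a constant independent of $\varepsilon$ (the uniformity being guaranteed by (\textbf{A1})(2) and the linear-growth bounds (\textbf{A2})(1)(ii), (\textbf{A2})(2)(ii)).

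\emph{Step 1 (basic energy identity).} I would apply the It\^o formula to the functional
\[
\mathcal{E}_\varepsilon(t)=\tfrac12\bigl(P^\varepsilon u_\varepsilon(t),u_\varepsilon(t)\bigr)+\tfrac12\left\Vert u_\varepsilon'(t)\right\Vert_{L^2(Q)}^2,
\]
which, by the weak formulation (iv), satisfies
\[
\mathcal{E}_\varepsilon(t)+\int_0^t\left\Vert\nabla u_\varepsilon'(s)\right\Vert_{L^2(Q)}^2\,ds
=\mathcal{E}_\varepsilon(0)+\int_0^t\bigl(f^\varepsilon(\cdot,\cdot,u_\varepsilon'),u_\varepsilon'\bigr)\,ds
+\int_0^t\bigl(g^\varepsilon(\cdot,\cdot,u_\varepsilon'),u_\varepsilon'\bigr)\,dW+\tfrac12\int_0^t\left\Vert g^\varepsilon(\cdot,\cdot,u_\varepsilon')\right\Vert_{L^2(Q)^m}^2\,ds.
\]
Here the damping term produces the crucial coercive quantity $\int_0^t\left\Vert\nabla u_\varepsilon'\right\Vert^2\,ds$ on the left, which will yield the last term in \eqref{eqn10aa}. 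The ellipticity (\textbf{A1})(2) gives $\mathcal{E}_\varepsilon(t)\geq \tfrac{\alpha}{2}\left\Vert\nabla u_\varepsilon(t)\right\Vert_{L^2(Q)}^2+\tfrac12\left\Vert u_\varepsilon'(t)\right\Vert_{L^2(Q)}^2$, and the Poincar\'e inequality converts this into control of $\left\Vert u_\varepsilon(t)\right\Vert_{H_0^1(Q)}^2$. Using the linear-growth hypotheses, I would bound $\bigl|(f^\varepsilon,u_\varepsilon')\bigr|\le C(1+\left\Vert u_\varepsilon'\right\Vert_{L^2}^2)$ and $\left\Vert g^\varepsilon\right\Vert_{L^2}^2\le C(1+\left\Vert u_\varepsilon'\right\Vert_{L^2}^2)$, then take expectations (the martingale term vanishes in expectation) and apply Gronwall's inequality to obtain $\mathbb{E}\,\mathcal{E}_\varepsilon(t)+\mathbb{E}\int_0^t\left\Vert\nabla u_\varepsilon'\right\Vert^2\,ds\le C$ uniformly in $t$ and $\varepsilon$.

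\emph{Step 2 (supremum over time).} To pass from $\mathbb{E}\,\mathcal{E}_\varepsilon(t)\le C$ to $\mathbb{E}\sup_{0\le t\le T}\mathcal{E}_\varepsilon(t)\le C$, I would take the supremum in the energy identity before taking expectations; the only term needing care is the stochastic integral, which I would control by the Burkh\"older--Davis--Gundy inequality:
\[
\mathbb{E}\sup_{0\le t\le T}\left|\int_0^t(g^\varepsilon,u_\varepsilon')\,dW\right|
\le C\,\mathbb{E}\left(\int_0^T\left\Vert g^\varepsilon\right\Vert_{L^2}^2\left\Vert u_\varepsilon'\right\Vert_{L^2}^2\,ds\right)^{1/2}
\le \tfrac14\mathbb{E}\sup_{0\le t\le T}\left\Vert u_\varepsilon'(t)\right\Vert_{L^2}^2+C\,\mathbb{E}\int_0^T(1+\left\Vert u_\varepsilon'\right\Vert_{L^2}^2)\,ds,
\]
where the last step uses Young's inequality to absorb the supremum into the left-hand side, and the remaining integral is already bounded by Step 1. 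This yields \eqref{eqn10aa}. The bound $\mathbb{E}\sup_{0\le t\le T}\left\Vert u_\varepsilon(t)\right\Vert_{L^2(Q)}^2\le C$ in \eqref{eqn11} then follows from $\left\Vert u_\varepsilon(t)\right\Vert_{L^2}\le\left\Vert u^0\right\Vert_{L^2}+\int_0^t\left\Vert u_\varepsilon'(s)\right\Vert_{L^2}\,ds$.

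\emph{Step 3 (fourth-power estimates).} For the $4$th-power bounds in \eqref{eqn11}, I would apply It\^o's formula to $\mathcal{E}_\varepsilon(t)^2$. This produces, besides squared versions of the previous terms, a new quadratic-variation contribution $2\int_0^t(g^\varepsilon,u_\varepsilon')^2\,ds\le C\int_0^t\left\Vert g^\varepsilon\right\Vert_{L^2}^2\left\Vert u_\varepsilon'\right\Vert_{L^2}^2\,ds\le C\int_0^t(1+\mathcal{E}_\varepsilon(s))\mathcal{E}_\varepsilon(s)\,ds$, and the drift terms are estimated similarly by $C\int_0^t(1+\mathcal{E}_\varepsilon(s)^2)\,ds$. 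After taking expectations, Gronwall gives $\mathbb{E}\,\mathcal{E}_\varepsilon(t)^2\le C$; then taking the supremum first and again using BDG (now $\mathbb{E}\sup_t\bigl|\int_0^t\mathcal{E}_\varepsilon(g^\varepsilon,u_\varepsilon')\,dW\bigr|\le C\,\mathbb{E}(\int_0^T\mathcal{E}_\varepsilon^2(1+\mathcal{E}_\varepsilon^2)\,ds)^{1/2}$, split by Young) absorbs the critical term. Since $\mathcal{E}_\varepsilon(t)^2\ge c(\left\Vert u_\varepsilon(t)\right\Vert_{H_0^1}^4+\left\Vert u_\varepsilon'(t)\right\Vert_{L^2}^4)$, this gives \eqref{eqn11}.

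\emph{Main obstacle.} The delicate point throughout is the treatment of the stochastic integral when taking the supremum in time: one must apply BDG and then split via Young's inequality so that the resulting $\mathbb{E}\sup_t\left\Vert u_\varepsilon'\right\Vert^2$ (resp. its square) can be absorbed into the left-hand side with a small coefficient, leaving only terms already controlled by the previous step. Getting the nonlinear Gronwall argument for $\mathcal{E}_\varepsilon^2$ to close — ensuring every remaining term is either absorbable or integrable against a known bound — is where the bookkeeping is most demanding; everything else is routine once the energy identity is in place.
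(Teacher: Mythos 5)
Your proposal is correct and follows essentially the same route as the paper: It\^o's formula for the energy, ellipticity plus the linear-growth bounds on $f$ and $g$, the Burkh\"older--Davis--Gundy inequality combined with Young's inequality to absorb $\mathbb{E}\sup_{t}\left\Vert u_{\varepsilon}'(t)\right\Vert_{L^{2}(Q)}^{2}$ into the left-hand side, and Gronwall. The only (immaterial) difference is in the fourth-power step, where the paper simply squares the integrated energy inequality and applies BDG to the squared stochastic integral, whereas you apply It\^o's formula to $\mathcal{E}_{\varepsilon}^{2}$; both close the same way.
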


\begin{proof}
By It\^{o}'s formula and integrating over $\left[  0,t\right]  $ with $0\leq
t\leq T$,we obtain
\begin{align}
&  \left\Vert u_{\varepsilon}^{\prime}(t)\right\Vert _{L^{2}\left(  Q\right)
}^{2}+2\int_{0}^{t}\left(  P^{\varepsilon}u_{\varepsilon}(s),u_{\varepsilon
}^{\prime}(s)\right)  ds-2\int_{0}^{t}(\Delta u_{\varepsilon}^{\prime
}(s),u_{\varepsilon}^{\prime}(s))ds=\left\Vert u^{1}\right\Vert _{L^{2}\left(
Q\right)  }^{2}+\label{eqn19a}\\
&  +2\int_{0}^{t}\left(  f\left(  \frac{x}{\varepsilon},\frac{s}{\varepsilon
},u_{\varepsilon}^{\prime}(s)\right)  ,u_{\varepsilon}^{\prime}\left(
s\right)  \right)  ds+2\int_{0}^{t}\left(  g\left(  \frac{x}{\varepsilon
},\frac{s}{\varepsilon},u_{\varepsilon}^{\prime}\left(  s\right)  \right)
,u_{\varepsilon}^{\prime}\left(  s\right)  \right)  dW(s)\nonumber\\
&  +\int_{0}^{t}\left\Vert g\left(  \frac{x}{\varepsilon},\frac{s}%
{\varepsilon},u_{\varepsilon}^{\prime}\left(  s\right)  \right)  \right\Vert
_{L^{2}\left(  Q\right)  ^{m}}^{2}ds.\nonumber
\end{align}
So we obtain%
\begin{align}
&  \left\Vert u_{\varepsilon}(t)\right\Vert _{H_{0}^{1}\left(  Q\right)  }%
^{2}+\left\Vert u_{\varepsilon}^{\prime}(t)\right\Vert _{L^{2}\left(
Q\right)  }^{2}+2\int_{0}^{t}\left\Vert u_{\varepsilon}^{\prime}(s)\right\Vert
_{H_{0}^{1}\left(  Q\right)  }^{2}ds\label{eqna}\\
&  \leq C\left(  \left\Vert u^{1}\right\Vert _{L^{2}\left(  Q\right)  }%
^{2}+\left\Vert u^{0}\right\Vert _{H_{0}^{1}\left(  Q\right)  }^{2}\right)
+2\int_{0}^{t}\left(  f\left(  \frac{x}{\varepsilon},\frac{s}{\varepsilon
},u_{\varepsilon}^{\prime}\left(  s\right)  \right)  ,u_{\varepsilon}^{\prime
}\left(  s\right)  \right)  ds\nonumber\\
&  \mathbb{+}\int_{0}^{t}\left\Vert g\left(  \frac{x}{\varepsilon},\frac
{s}{\varepsilon},u_{\varepsilon}^{\prime}\left(  s\right)  \right)
\right\Vert _{L^{2}\left(  Q\right)  ^{m}}^{2}ds+2\int_{0}^{t}\left(  g\left(
\frac{x}{\varepsilon},\frac{s}{\varepsilon},u_{\varepsilon}^{\prime}\left(
s\right)  \right)  ,u_{\varepsilon}^{\prime}\left(  s\right)  \right)
dW\left(  s\right) \nonumber
\end{align}
Taking the supremum in $t\in\left[  0,T\right]  $, and the mathematical
expectation in both side of (\ref{eqna}), and using also the assumption
(\textbf{A1}) we have
\begin{align*}
&  \mathbb{E}\sup_{0\leq t\leq T}\left(  \left\Vert u_{\varepsilon
}(t)\right\Vert _{H_{0}^{1}\left(  Q\right)  }^{2}+\left\Vert u_{\varepsilon
}^{\prime}(t)\right\Vert _{L^{2}\left(  Q\right)  }^{2}\right)  +2\mathbb{E}%
\int_{0}^{T}\left\Vert u_{\varepsilon}^{\prime}(s)\right\Vert _{H_{0}%
^{1}\left(  Q\right)  }^{2}dt\\
&  \leq C\left(  \left\Vert u^{1}\right\Vert _{L^{2}\left(  Q\right)  }%
^{2}+\left\Vert u^{0}\right\Vert _{H_{0}^{1}\left(  Q\right)  }^{2}\right)
+2\mathbb{E}\int_{0}^{T}\left(  f\left(  \frac{x}{\varepsilon},\frac
{s}{\varepsilon},u_{\varepsilon}^{\prime}\left(  s\right)  \right)
,u_{\varepsilon}^{\prime}\left(  s\right)  \right)  ds\\
&  \mathbb{E}\int_{0}^{T}\left\Vert g\left(  \frac{x}{\varepsilon},\frac
{s}{\varepsilon},u_{\varepsilon}^{\prime}\left(  s\right)  \right)
\right\Vert _{L^{2}\left(  Q\right)  ^{m}}^{2}ds+2\sup_{0\leq t\leq
T}\mathbb{E}\int_{0}^{t}\left(  g\left(  \frac{x}{\varepsilon},\frac
{s}{\varepsilon},u_{\varepsilon}^{\prime}\left(  s\right)  \right)
,u_{\varepsilon}^{\prime}\left(  s\right)  \right)  dW\left(  s\right)  .
\end{align*}
Making use of H\"{o}lder's inequality, Young's inequality and the assumption
on $f$, we obtain
\begin{align}
&  2\mathbb{E}\int_{0}^{T}\left(  f\left(  \frac{x}{\varepsilon},\frac
{s}{\varepsilon},u_{\varepsilon}^{\prime}\left(  s\right)  \right)
,u_{\varepsilon}^{\prime}\left(  s\right)  \right)  ds\nonumber\\
&  \leq\mathbb{E}\int_{0}^{T}\left(  \left\Vert f\left(  \frac{x}{\varepsilon
},\frac{s}{\varepsilon},u_{\varepsilon}^{\prime}\left(  s\right)  \right)
\right\Vert _{L^{2}\left(  Q\right)  }^{2}+\left\Vert u_{\varepsilon}^{\prime
}\left(  s\right)  \right\Vert _{L^{2}\left(  Q\right)  }^{2}\right)
ds\nonumber\\
&  \leq C\mathbb{E}\int_{0}^{T}\left(  1+\left\Vert u_{\varepsilon}\left(
s\right)  \right\Vert _{H_{0}^{1}\left(  Q\right)  }^{2}+\left\Vert
u_{\varepsilon}^{\prime}\left(  s\right)  \right\Vert _{L^{2}\left(  Q\right)
}^{2}\right)  ds.
\end{align}
Using the assumption on $g$, we get%

\begin{align}
&  \mathbb{E}\int_{0}^{T}\left\Vert g\left(  \frac{x}{\varepsilon},\frac
{s}{\varepsilon},u_{\varepsilon}^{\prime}\left(  s\right)  \right)
\right\Vert _{L^{2}\left(  Q\right)  ^{m}}^{2}ds\label{eqn20a}\\
&  \leq C\int_{0}^{T}\left(  1+\left\Vert u_{\varepsilon}\left(  s\right)
\right\Vert _{H_{0}^{1}\left(  Q\right)  }^{2}+\left\Vert u_{\varepsilon
}^{\prime}\left(  s\right)  \right\Vert _{L^{2}\left(  Q\right)  }^{2}\right)
ds.\nonumber
\end{align}

Applying Burkh\"{o}lder-Davis-Gundy's inequality, we get%
\begin{align}
&  \mathbb{E}\sup_{0\leq t\leq T}\int_{0}^{t}\left(  g\left(  \frac
{x}{\varepsilon},\frac{s}{\varepsilon},u_{\varepsilon}^{\prime}\left(
s\right)  \right)  ,u_{\varepsilon}^{\prime}\left(  s\right)  \right)
dW\left(  s\right) \nonumber\\
&  \leq C\mathbb{E}\left(  \int_{0}^{T}\left(  g\left(  \frac{x}{\varepsilon
},\frac{s}{\varepsilon},u_{\varepsilon}^{\prime}\left(  s\right)  \right)
,u_{\varepsilon}^{\prime}\left(  s\right)  \right)  ^{2}ds\right)  ^{\frac
{1}{2}}\nonumber\\
&  \leq C\mathbb{E}\left(  \int_{0}^{T}\left\Vert g\left(  \frac
{x}{\varepsilon},\frac{s}{\varepsilon},u_{\varepsilon}^{\prime}\left(
s\right)  \right)  \right\Vert _{L^{2}\left(  Q\right)  ^{m}}^{2}\left\Vert
u_{\varepsilon}^{\prime}\left(  s\right)  \right\Vert _{L^{2}\left(  Q\right)
}^{2}ds\right)  ^{\frac{1}{2}}\nonumber\\
&  \leq C\mathbb{E}\left(  \int_{0}^{T}\left(  1+\left\Vert u_{\varepsilon
}\left(  s\right)  \right\Vert _{H_{0}^{1}\left(  Q\right)  }^{2}+\left\Vert
u_{\varepsilon}^{\prime}\left(  s\right)  \right\Vert _{L^{2}\left(  Q\right)
}^{2}\right)  \left\Vert u_{\varepsilon}^{\prime}\left(  s\right)  \right\Vert
_{L^{2}\left(  Q\right)  }^{2}ds\right)  ^{\frac{1}{2}}\\
&  \leq C\mathbb{E}\sup_{0\leq t\leq T}\left(  1+\left\Vert u_{\varepsilon
}\left(  s\right)  \right\Vert _{H_{0}^{1}\left(  Q\right)  }^{2}+\left\Vert
u_{\varepsilon}^{\prime}\left(  s\right)  \right\Vert _{L^{2}\left(  Q\right)
}^{2}\right)  ^{\frac{1}{2}}\left(  \int_{0}^{T}\left\Vert u_{\varepsilon
}^{\prime}\left(  s\right)  \right\Vert _{L^{2}\left(  Q\right)  }%
^{2}ds\right)  ^{\frac{1}{2}}\nonumber\\
&  \leq\frac{1}{2}C\mathbb{E}\sup_{0\leq t\leq T}\left(  1+\left\Vert
u_{\varepsilon}\left(  s\right)  \right\Vert _{H_{0}^{1}\left(  Q\right)
}^{2}+\left\Vert u_{\varepsilon}^{\prime}\left(  s\right)  \right\Vert
_{L^{2}\left(  Q\right)  }^{2}\right) \label{eqn21}\\
&  +\frac{1}{2}C\mathbb{E}\int_{0}^{T}\left\Vert u_{\varepsilon}^{\prime
}\left(  s\right)  \right\Vert _{L^{2}\left(  Q\right)  }^{2}ds\nonumber
\end{align}
Combining (\ref{eqn20a}), (\ref{eqn21}), we obtain%
\begin{align*}
&  \mathbb{E}\sup_{0\leq t\leq T}\left\Vert u_{\varepsilon}\left(  t\right)
\right\Vert _{H_{0}^{1}\left(  Q\right)  }^{2}+\mathbb{E}\sup_{0\leq t\leq
T}\left\Vert u_{\varepsilon}^{\prime}\left(  t\right)  \right\Vert
_{L^{2}\left(  Q\right)  }^{2}+2\mathbb{E}\int_{0}^{T}\left\Vert
u_{\varepsilon}^{\prime}\left(  t\right)  \right\Vert _{H_{0}^{1}\left(
Q\right)  }^{2}dt\\
&  \leq C\left(  \left\Vert u^{1}\right\Vert _{L^{2}\left(  Q\right)  }%
^{2}+\left\Vert u^{0}\right\Vert _{H_{0}^{1}\left(  Q\right)  }^{2}\right)
+C(1+\left\Vert u_{\varepsilon}\left(  s\right)  \right\Vert _{H_{0}%
^{1}\left(  Q\right)  }^{2}+\left\Vert u_{\varepsilon}^{\prime}\left(
s\right)  \right\Vert _{L^{2}\left(  Q\right)  }^{2})\\
&  +C\mathbb{E}\int_{0}^{T}\left\Vert u_{\varepsilon}^{\prime}\left(
s\right)  \right\Vert _{L^{2}\left(  Q\right)  }^{2}ds.
\end{align*}
The Gronwall's inequality then gives%

\[
\mathbb{E}\sup_{0\leq t\leq T}\left\Vert u_{\varepsilon}\left(  t\right)
\right\Vert _{H_{0}^{1}\left(  Q\right)  }^{2}+\mathbb{E}\sup_{0\leq t\leq
T}\left\Vert u_{\varepsilon}^{\prime}\left(  t\right)  \right\Vert
_{L^{2}\left(  Q\right)  }^{2}+\mathbb{E}\int_{0}^{T}\left\Vert u_{\varepsilon
}^{\prime}(t)\right\Vert _{H_{0}^{1}\left(  Q\right)  }^{2}dt\leq C.
\]
We can now prove the last estimate. Taking the square in both side of the
inequality (\ref{eqna}) we have%
\begin{align}
&  \left\Vert u_{\varepsilon}(t)\right\Vert _{H_{0}^{1}\left(  Q\right)  }%
^{4}+\left\Vert u_{\varepsilon}^{\prime}(t)\right\Vert _{L^{2}\left(
Q\right)  }^{4}\leq C\left(  \left\Vert u^{1}\right\Vert _{L^{2}\left(
Q\right)  }^{4}+\left\Vert u^{0}\right\Vert _{H_{0}^{1}\left(  Q\right)  }%
^{4}\right)  \text{ }\label{eqn22}\\
&  +C\left(  \int_{0}^{t}\left(  f\left(  \frac{x}{\varepsilon},\frac
{s}{\varepsilon},u_{\varepsilon}^{\prime}\left(  s\right)  \right)
,u_{\varepsilon}^{\prime}\left(  s\right)  \right)  ds\right)  ^{2}\text{
}+4\left(  \int_{0}^{t}\left(  g\left(  \frac{x}{\varepsilon},\frac
{s}{\varepsilon},u_{\varepsilon}^{\prime}\left(  s\right)  \right)
,u_{\varepsilon}^{\prime}\left(  s\right)  \right)  dW\left(  s\right)
\right)  ^{2}.\nonumber
\end{align}
Taking the supremum with respect to $t\in\left[  0,T\right]  $ and using the
assumption on $f$, we get
\begin{align}
\mathbb{E}\left(  \int_{0}^{T}\left(  f\left(  \frac{x}{\varepsilon},\frac
{s}{\varepsilon},u_{\varepsilon}^{\prime}\left(  s\right)  \right)
,u_{\varepsilon}^{\prime}\left(  s\right)  \right)  ds\right)  ^{2}  &  \leq
C\mathbb{E}\left(  \int_{0}^{T}\left(  \left\Vert f\left(  \frac
{x}{\varepsilon},\frac{s}{\varepsilon},u_{\varepsilon}^{\prime}\left(
s\right)  \right)  \right\Vert _{L^{2}\left(  Q\right)  }^{2}+\left\Vert
u_{\varepsilon}^{\prime}\left(  s\right)  \right\Vert _{L^{2}\left(  Q\right)
}^{2}\right)  ds\right)  ^{2}\nonumber\\
&  \leq C\mathbb{E}\int_{0}^{T}\left(  1+\left\Vert u_{\varepsilon}\left(
s\right)  \right\Vert _{H_{0}^{1}\left(  Q\right)  }^{4}+\left\Vert
u_{\varepsilon}^{\prime}\left(  s\right)  \right\Vert _{L^{2}\left(  Q\right)
}^{4}\right)  ds. \label{eqn23a}%
\end{align}
Next, using again the Burkh\"{o}lder-Davis-Gundy's inequality, we obtain
\begin{align}
\mathbb{E}\sup_{0\leq t\leq T}\left(  \int_{0}^{t}\left(  g\left(  \frac
{x}{\varepsilon},\frac{s}{\varepsilon},u_{\varepsilon}^{\prime}\left(
s\right)  \right)  ,u_{\varepsilon}^{\prime}\left(  s\right)  \right)
dW\left(  s\right)  \right)  ^{2}  &  \leq\mathbb{E}\left(  \int_{0}%
^{T}\left(  g\left(  \frac{x}{\varepsilon},\frac{s}{\varepsilon}%
,u_{\varepsilon}^{\prime}\left(  s\right)  \right)  ,u_{\varepsilon}^{\prime
}\left(  s\right)  \right)  ^{2}ds\right) \label{eqn24}\\
&  \leq C\mathbb{E}\int_{0}^{T}\left(  1+\left\Vert u_{\varepsilon}\left(
s\right)  \right\Vert _{H_{0}^{1}\left(  Q\right)  }^{4}+\left\Vert
u_{\varepsilon}^{\prime}\left(  s\right)  \right\Vert _{L^{2}\left(  Q\right)
}^{4}\right)  ds.\nonumber
\end{align}
Collecting the results (\ref{eqn22}), (\ref{eqn23a}), (\ref{eqn24}) and using
Gronwall inequality we obtain
\[
\mathbb{E}\sup_{0\leq t\leq T}\left\Vert u_{\varepsilon}(t)\right\Vert
_{H_{0}^{1}\left(  Q\right)  }^{4}+\mathbb{E}\sup_{0\leq t\leq T}\left\Vert
u_{\varepsilon}^{\prime}(t)\right\Vert _{L^{2}\left(  Q\right)  }^{4}\leq C.
\]
This ends the proof.
\end{proof}

\begin{lemma}
\label{l2.6}Under the assumptions of Lemma \ref{l2.5}, we have
\begin{equation}
\mathbb{E}\sup_{\left\vert \theta\right\vert \leq\delta}\int_{0}^{T}\left\Vert
u_{\varepsilon}^{\prime}(t+\theta)-u_{\varepsilon}^{\prime}(t)\right\Vert
_{H^{-1}(Q)}^{2}dt\leq C\delta\text{.} \label{eqn12}%
\end{equation}
where $\delta>0$ is a small parameter, $C$ a positive constant independent of
$\varepsilon$ and $\delta$.
\end{lemma}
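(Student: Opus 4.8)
The plan is to start from the integral (weak) form of (\ref{eqnn1.1}). Reading the weak formulation of (\ref{eqnn1.1}) as an identity in $H^{-1}(Q)$, one obtains, for $t\in(0,T)$ and $|\theta|\le\delta$,
\[
u_\varepsilon'(t+\theta)-u_\varepsilon'(t)=\int_t^{t+\theta}\big[\Delta u_\varepsilon'(s)-P^\varepsilon u_\varepsilon(s)+f^\varepsilon(s)\big]\,ds+\int_t^{t+\theta}g^\varepsilon(s)\,dW(s),
\]
where it is understood that $u_\varepsilon$, $u_\varepsilon'$ and $g^\varepsilon$ have first been extended from $[0,T]$ to $[-\delta,T+\delta]$ by their endpoint values (resp. by $0$ for $g^\varepsilon$), a harmless modification which does not affect the a priori bounds of Lemma \ref{l2.5}. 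I would then split the increment into its drift part $R_\varepsilon(t,\theta)$ (the first integral, with integrand $F_\varepsilon(s):=\Delta u_\varepsilon'(s)-P^\varepsilon u_\varepsilon(s)+f^\varepsilon(s)$) and its martingale part $M_\varepsilon(t,\theta):=\int_t^{t+\theta}g^\varepsilon\,dW$, and estimate their contributions to $\mathbb{E}\sup_{|\theta|\le\delta}\int_0^T\|\cdot\|_{H^{-1}(Q)}^2\,dt$ separately.

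For the drift part, the key point is that $F_\varepsilon$ is controlled in $H^{-1}(Q)$ by quantities already bounded in (\ref{eqn10aa}): by (\textbf{A1}) the matrix $a^\varepsilon$ is bounded uniformly in $\varepsilon$, hence $\|P^\varepsilon u_\varepsilon(s)\|_{H^{-1}}\le C\|u_\varepsilon(s)\|_{H_0^1}$; trivially $\|\Delta u_\varepsilon'(s)\|_{H^{-1}}\le\|u_\varepsilon'(s)\|_{H_0^1}$; and by the growth condition in (\textbf{A2}), $\|f^\varepsilon(s)\|_{H^{-1}}\le C\|f^\varepsilon(s)\|_{L^2}\le C(1+\|u_\varepsilon'(s)\|_{L^2})$. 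The Cauchy–Schwarz inequality in time then gives $\|R_\varepsilon(t,\theta)\|_{H^{-1}}^2\le|\theta|\int_t^{t+\theta}\|F_\varepsilon(s)\|_{H^{-1}}^2\,ds\le\delta\int_{t-\delta}^{t+\delta}\|F_\varepsilon(s)\|_{H^{-1}}^2\,ds$, a bound independent of $\theta$; integrating over $t\in(0,T)$, taking the expectation, applying Fubini's theorem and invoking (\ref{eqn10aa}) yields $\mathbb{E}\sup_{|\theta|\le\delta}\int_0^T\|R_\varepsilon(t,\theta)\|_{H^{-1}}^2\,dt\le C\delta^2\le C\delta$ for $\delta\le 1$.

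For the martingale part, since $g^\varepsilon(s,\cdot)\in L^2(Q)^m\hookrightarrow H^{-1}(Q)^m$, the process $N_\varepsilon(r):=\int_0^r g^\varepsilon(s)\,dW(s)$ is a continuous square-integrable $H^{-1}(Q)$-valued martingale with $M_\varepsilon(t,\theta)=N_\varepsilon(t+\theta)-N_\varepsilon(t)$. I would first dominate the supremum of a $t$-integral by the $t$-integral of the supremum and interchange $\mathbb{E}$ with $\int_0^T$ by Tonelli's theorem; then, for fixed $t$, using that $\|N_\varepsilon(r_1)-N_\varepsilon(r_2)\|_{H^{-1}}\le 2\sup_{t-\delta\le r\le t+\delta}\|N_\varepsilon(r)-N_\varepsilon(t-\delta)\|_{H^{-1}}$ for all $r_1,r_2\in[t-\delta,t+\delta]$, Doob's maximal inequality applied to the martingale $r\mapsto N_\varepsilon(r)-N_\varepsilon(t-\delta)$ on $[t-\delta,t+\delta]$, followed by the Itô isometry in $H^{-1}(Q)$ and the bound $\|g^\varepsilon(s)\|_{L^2(Q)^m}^2\le C(1+\|u_\varepsilon'(s)\|_{L^2}^2)$ coming from (\textbf{A2}), gives $\mathbb{E}\sup_{|\theta|\le\delta}\|M_\varepsilon(t,\theta)\|_{H^{-1}}^2\le C\,\mathbb{E}\int_{t-\delta}^{t+\delta}(1+\|u_\varepsilon'(s)\|_{L^2}^2)\,ds$; integrating over $t\in(0,T)$, using Fubini's theorem and (\ref{eqn10aa}) then yields $\mathbb{E}\sup_{|\theta|\le\delta}\int_0^T\|M_\varepsilon(t,\theta)\|_{H^{-1}}^2\,dt\le C\delta$. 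Adding the two bounds proves (\ref{eqn12}). The routine parts (the embeddings, Cauchy–Schwarz, Fubini) are immediate; the point requiring care, and the main obstacle, is the handling of the stochastic term — interchanging $\sup_{|\theta|\le\delta}$ with $\int_0^T\cdot\,dt$, reducing the backward ($\theta<0$) increments to a single forward $H^{-1}(Q)$-valued martingale so that Doob's inequality applies, and dealing with the time boundary through the extension of the data beyond $[0,T]$.
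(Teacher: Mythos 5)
Your proposal is correct and follows essentially the same route as the paper: the same decomposition of the increment into a drift part (estimated by Cauchy--Schwarz in time, yielding $C\delta^{2}$) and a stochastic part (controlled by a maximal martingale inequality plus the It\^{o} isometry, yielding $C\delta$), together with the a priori bounds of Lemma \ref{l2.5}. The only differences are cosmetic: you use Doob's inequality for the $H^{-1}(Q)$-valued martingale where the paper invokes Burkh\"{o}lder--Davis--Gundy, and you treat the time boundary and the case $\theta<0$ more explicitly than the paper does.
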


\begin{proof}
From (\ref{eqnn1.1}) we can write, for$\ \theta\geq0$,
\begin{align*}
u_{\varepsilon}^{\prime}\left(  t+\theta\right)  -u_{\varepsilon}^{\prime
}\left(  t\right)   &  =\int_{t}^{t+\theta}P^{\varepsilon}u_{\varepsilon
}ds+\int_{t}^{t+\theta}\Delta u_{\varepsilon}^{\prime}ds+\int_{t}^{t+\theta
}f\left(  \frac{x}{\varepsilon},\frac{s}{\varepsilon},u_{\varepsilon}^{\prime
}\left(  s\right)  \right)  ds\\
&  +\int_{t}^{t+\theta}g\left(  \frac{x}{\varepsilon},\frac{s}{\varepsilon
},u_{\varepsilon}^{\prime}\left(  s\right)  \right)  dW\left(  s\right)  .
\end{align*}
Then%
\begin{align}
\left\Vert u_{\varepsilon}^{\prime}\left(  t+\theta\right)  -u_{\varepsilon
}^{\prime}\left(  t\right)  \right\Vert _{H^{-1}\left(  Q\right)  }^{2}  &
\leq\left(  \int_{t}^{t+\theta}\left\Vert u_{\varepsilon}\left(  s\right)
\right\Vert _{H_{0}^{1}(Q)}ds\right)  ^{2}+\left(  \int_{t}^{t+\theta
}\left\Vert \nabla u_{\varepsilon}^{\prime}\left(  s\right)  \right\Vert
_{L^{2}(Q)}ds\right)  ^{2}\label{eqn25}\\
&  +C\left(  \int_{t}^{t+\theta}\left\Vert f\left(  \frac{x}{\varepsilon
},\frac{s}{\varepsilon},u_{\varepsilon}^{\prime}\left(  s\right)  \right)
\right\Vert _{L^{2}(Q)}ds\right)  ^{2}+\left\vert \int_{t}^{t+\theta}g\left(
\frac{x}{\varepsilon},\frac{s}{\varepsilon},u_{\varepsilon}^{\prime}\left(
s\right)  \right)  dW\left(  s\right)  \right\vert ^{2}.\text{ }\nonumber
\end{align}
Integrating between $0$ and $T$ and taking the mathematical expectation, we
have%
\begin{align}
\mathbb{E}\sup_{0\leq\theta\leq\delta}\int_{0}^{T-\theta}\left\Vert
u_{\varepsilon}^{\prime}\left(  t+\theta\right)  -u_{\varepsilon}^{\prime
}\left(  t\right)  \right\Vert _{H^{-1}\left(  Q\right)  }^{2}dt  &  \leq
C\delta^{2}\left(  \mathbb{E}\sup_{0\leq s\leq T}\left\Vert u_{\varepsilon
}\left(  s\right)  \right\Vert _{H_{0}^{1}(Q)}^{2}+\mathbb{E}\int_{0}%
^{T}\left\Vert \nabla u_{\varepsilon}^{\prime}\left(  s\right)  \right\Vert
_{L^{2}(Q)}^{2}ds\right) \nonumber\\
&  C\delta^{2}\left(  1+\mathbb{E}\sup_{0\leq s\leq T}\left(  \left\Vert
u_{\varepsilon}\left(  s\right)  \right\Vert _{H_{0}^{1}(Q)}^{2}+\left\Vert
u_{\varepsilon}^{\prime}\left(  s\right)  \right\Vert _{L^{2}(Q)}^{2}\right)
\right) \nonumber\\
&  +\mathbb{E}\int_{0}^{T-\theta}\sup_{0\leq\theta\leq\delta}\left\vert
\int_{t}^{t+\theta}g\left(  \frac{x}{\varepsilon},\frac{s}{\varepsilon
},u_{\varepsilon}^{\prime}\left(  s\right)  \right)  dW\left(  s\right)
\right\vert ^{2}dt \label{eqn26}%
\end{align}
Next using Burkh\"{o}lder-Davis-Gundy's inequality we obtain
\begin{align}
&  \mathbb{E}\int_{0}^{T-\theta}\sup_{0\leq\theta\leq\delta}\left\vert
\int_{t}^{t+\theta}g\left(  \frac{x}{\varepsilon},\frac{s}{\varepsilon
},u_{\varepsilon}^{\prime}\left(  s\right)  \right)  dW\left(  s\right)
\right\vert ^{2}dt\nonumber\\
&  \leq\mathbb{E}\int_{0}^{T}\left(  \int_{t}^{t+\theta}\left\Vert g\left(
\frac{x}{\varepsilon},\frac{s}{\varepsilon},u_{\varepsilon}^{\prime}\left(
s\right)  \right)  \right\Vert ^{2}ds\right)  dt\nonumber\\
&  \leq C\mathbb{E}\int_{0}^{T}\left(  \int_{t}^{t+\theta}(1+\left\Vert
u_{\varepsilon}\left(  s\right)  \right\Vert _{H_{0}^{1}(Q)}^{2}+\left\Vert
u_{\varepsilon}^{\prime}\left(  s\right)  \right\Vert _{L^{2}(Q)}%
^{2})ds\right)  dt\nonumber\\
&  \leq C\delta, \label{eqn27}%
\end{align}
we have used the assumption on $g$. Collecting the results and making the same
reasoning with $\theta<0$, we finally obtain%
\[
\mathbb{E}\sup_{\left\vert \theta\right\vert \leq\delta}\int_{0}^{T}\left\Vert
u_{\varepsilon}^{\prime}\left(  t+\theta\right)  -u_{\varepsilon}^{\prime
}\left(  t\right)  \right\Vert _{H^{-1}\left(  Q\right)  }^{2}dt\leq
C\delta\text{,}%
\]
whence the lemma.
\end{proof}

\section{Appendix A2}

Let $(\Omega,\mathcal{A},\mathbb{P})$ be a given probability space with
expectation $\mathbb{E}$. We recall here some facts about the convergence of
the integral $\int_{0}^{t}H_{\varepsilon}dX_{\varepsilon}(s)$ where
$(X_{\varepsilon})_{\varepsilon>0}$ is a sequence of semimartingales and
$(H_{\varepsilon})_{\varepsilon>0}$ is a sequence of c\`{a}dl\`{a}g
(right-continuous with left limit) processes such that $H_{\varepsilon}$ is
adapted to the filtration generated by $X_{\varepsilon}$. Here we follow the
presentation of \cite{Lejay} which is borrowed from \cite{KP}. We begin with
some definitions borrowed from the preceding references.

\begin{definition}
[Good sequence]\label{dB1}\emph{A sequence of c\`{a}dl\`{a}g semimartigales
}$(X_{\varepsilon})_{\varepsilon>0}$\emph{ is said to be a }good
sequence\emph{ if }$(X_{\varepsilon})_{\varepsilon>0}$\emph{ converges in law
to a process }$X$\emph{ and for any sequence }$(H_{\varepsilon})_{\varepsilon
>0}$\emph{ of c\`{a}dl\`{a}g processes such that }$H_{\varepsilon}$\emph{ is
adapted to the filtration generated by }$X_{\varepsilon}$\emph{ and
}$(H_{\varepsilon},X_{\varepsilon})$\emph{ converges in law to }$(H,X)$\emph{,
then }$X$\emph{ is a semimartingale with respect to the smallest filtration
}$\mathcal{H}=(\mathcal{H}(t))_{t\geq0}$\emph{ generated by }$(H,X)$\emph{
satisfying the usual hypotheses, and, when all the involved stochastic
integrals are defined, }%
\[
\int_{0}^{t}H_{\varepsilon}(s)dX_{\varepsilon}(s)\rightarrow\int_{0}%
^{t}H(s)dX(s)\text{ }\mathbb{P}\text{-a.s.}%
\]

\end{definition}

From now on, we restrict ourselves to the sequence of semimartingales defined
by $X^{\varepsilon}(t)\equiv M^{\varepsilon}(t)=\int_{0}^{t}H_{\varepsilon
}(s)dB_{\varepsilon}(s)$ where $(B_{\varepsilon}(t))_{\varepsilon>0}$ is a
sequence of $1$-dimensional Browian motion defined on $(\Omega,\mathcal{A}%
,\mathbb{P})$.

\begin{definition}
[{Condition UCV; \cite[Definition 7.5, p. 23]{KP}}]\label{dB2}\emph{A sequence
of continuous semimartingales }$(X_{\varepsilon})_{\varepsilon>0}$\emph{ is
said to satisfy }condition \emph{UCV (that is, }$(X_{\varepsilon
})_{\varepsilon>0}$\emph{ is said to have }Uniformly Controlled
Variations\emph{)} \emph{if for each }$\eta>0$\emph{ and each }$\varepsilon
>0$\emph{, there exists a stopping time }$T^{\varepsilon,\eta}$\emph{ such
that }$\mathbb{P}(T^{\varepsilon,\eta}\leq\eta)\leq\frac{1}{\eta}$\emph{ and
the quadratic variation }$\left\langle M_{\varepsilon},M_{\varepsilon
}\right\rangle (t)$ of $M_{\varepsilon}$ is such that $\varepsilon$%
\begin{equation}
\sup_{\varepsilon>0}\mathbb{E}\left[  \left\langle M_{\varepsilon
},M_{\varepsilon}\right\rangle (\min(1,T^{\varepsilon,\eta}))\right]
\equiv\sup_{\varepsilon>0}\int_{0}^{\min(1,T^{\varepsilon,\eta})}%
\mathbb{E}\left\vert H_{\varepsilon}(s)\right\vert ^{2}ds<\infty. \label{C1}%
\end{equation}

\end{definition}

\begin{remark}
\label{rB1}\emph{The condition (\ref{C1}) in the above definition is given in
\cite{Lejay} for a more general sequence of processes as the one above. Here
we adapt it to our setting.}
\end{remark}

The next result provides us with a characterization of good sequences.

\begin{theorem}
[{\cite[Theorem 1]{Lejay}}]\label{tB1}Let $(X_{\varepsilon})_{\varepsilon>0}$
be a sequence of semimartigales converging in law to some process $X$. Then,
the sequence $(X_{\varepsilon})_{\varepsilon>0}$ is good if and only if it
satisfies the condition \emph{UCV}.
\end{theorem}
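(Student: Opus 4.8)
The statement is \cite[Theorem 1]{Lejay}, which is a specialization of the Kurtz--Protter theory \cite{KP}, so the plan is to reproduce the two implications of that circle of ideas in the present continuous-semimartingale setting; I would moreover keep in mind the simplification afforded by the framework actually used in the paper, where each $X_\varepsilon \equiv M_\varepsilon = \int_0^\cdot H_\varepsilon\,dB_\varepsilon$ is already a local martingale, so that the canonical decomposition is trivial and condition UCV (Definition \ref{dB2}) reduces to a uniform, localized $L^2$ bound on $H_\varepsilon$.

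\emph{Sufficiency} (UCV $\Rightarrow$ good). Let $(H_\varepsilon)$ be c\`adl\`ag, $H_\varepsilon$ adapted to the filtration generated by $X_\varepsilon$, with $(H_\varepsilon,X_\varepsilon)\to(H,X)$ in law. First I would use Skorokhod's theorem \cite{1} to realize this convergence almost surely, in the Skorokhod topology, on a common probability space. Writing $X_\varepsilon = M_\varepsilon + A_\varepsilon$ (local martingale plus finite variation), UCV provides for each $\eta$ stopping times $T^{\varepsilon,\eta}$ with $\mathbb{P}(T^{\varepsilon,\eta}\le\eta)\le 1/\eta$ along which $\langle M_\varepsilon,M_\varepsilon\rangle$ and the total variation of $A_\varepsilon$ are bounded in $L^1$ uniformly in $\varepsilon$; this uniform control is precisely what lets one pass to the limit in the predictable characteristics and conclude, as in \cite{KP}, that $X$ is a semimartingale with respect to the augmented filtration $\mathcal{H}$ generated by $(H,X)$.

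The heart of the matter is then the convergence $\int_0^{\cdot} H_\varepsilon\,dX_\varepsilon \to \int_0^{\cdot} H\,dX$. I would fix $\eta$, work on $[0,1\wedge T^{\varepsilon,\eta}]$, and replace $H_\varepsilon$ by its time-discretization $H_\varepsilon^{(n)}$ along a deterministic mesh $0 = t_0^n < \cdots < t_{k_n}^n = 1$ whose points are a.s.\ continuity points of the limit paths. For the step integrand, $\int H_\varepsilon^{(n)}\,dX_\varepsilon = \sum_k H_\varepsilon(t_k^n)\,(X_\varepsilon(t_{k+1}^n) - X_\varepsilon(t_k^n))$ is a continuous functional of $(H_\varepsilon, X_\varepsilon)$, so it converges a.s.\ to $\int H^{(n)}\,dX$. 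The discretization error $\int (H_\varepsilon - H_\varepsilon^{(n)})\,dX_\varepsilon$ I would split into its $dM_\varepsilon$ part, estimated via Doob's and the Burkh\"older--Davis--Gundy inequalities against $\langle M_\varepsilon \rangle$, and its $dA_\varepsilon$ part, estimated pathwise by the total variation of $A_\varepsilon$; by the UCV bounds both are $\le c_\eta\,\omega(H;1/n)$ \emph{uniformly in} $\varepsilon$, where $\omega(H;\cdot)$ is the modulus of continuity of the a.s.\ continuous limit $H$. Letting first $n\to\infty$ and then $\eta\to\infty$ (the event $\{T^{\varepsilon,\eta}\le\eta\}$ having probability at most $1/\eta$) yields convergence in probability, hence a.s.\ along a subsequence, which is the defining property of a good sequence in Definition \ref{dB1}.

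\emph{Necessity} (good $\Rightarrow$ UCV). I would argue by contradiction: if UCV failed, there would be a subsequence and deterministic meshes along which the total variation of $A_\varepsilon$, or the quadratic variation of $M_\varepsilon$, on $[0,1]$ (after any admissible stopping) blows up. Taking the adapted, uniformly bounded integrands $H_\varepsilon(s) = \operatorname{sgn}(X_\varepsilon(t_{k+1}^n) - X_\varepsilon(t_k^n))$ on $(t_k^n, t_{k+1}^n]$ makes $\int_0^1 H_\varepsilon\,dX_\varepsilon$ recover these variations up to a vanishing remainder; passing to a further subsequence so that $(H_\varepsilon, X_\varepsilon) \to (H,X)$ in law, goodness would force $\int_0^1 H_\varepsilon\,dX_\varepsilon$ to converge to the a.s.\ finite $\int_0^1 H\,dX$ --- a contradiction. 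I expect the main obstacles to be exactly this direction and the uniform-in-$\varepsilon$ bound on the discretization error in the sufficiency part: both rest on legitimately interchanging the limits in $n$ and $\varepsilon$ (which is what UCV encodes) and on the tightness of the auxiliary step integrands, where \cite{KP} do the genuine technical work.
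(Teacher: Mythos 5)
The paper does not prove this statement at all: Theorem \ref{tB1} is quoted verbatim from \cite[Theorem 1]{Lejay}, which in turn rests on the Kurtz--Protter theory in \cite{KP}, so there is no internal argument to compare yours against. Judged on its own terms, your sufficiency half is a faithful outline of the standard Kurtz--Protter proof (Skorokhod representation, discretization of the integrand along meshes of a.s.\ continuity points, and a uniform-in-$\varepsilon$ bound on the discretization error via the UCV control of $\langle M_\varepsilon\rangle$ and of the total variation of $A_\varepsilon$). One caveat there: for merely c\`adl\`ag $H$ the uniform modulus of continuity $\omega(H;1/n)$ does not tend to zero, so the error estimate must be phrased with the Skorokhod (c\`adl\`ag) modulus; in the paper's application $H$ is continuous, so this is harmless, but as a proof of the cited theorem in its stated generality it needs the extra care taken in \cite{KP}.

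The genuine gap is in your necessity direction. The integrand you propose, $H_\varepsilon(s)=\operatorname{sgn}\bigl(X_\varepsilon(t_{k+1}^n)-X_\varepsilon(t_k^n)\bigr)$ on $(t_k^n,t_{k+1}^n]$, depends on the increment of $X_\varepsilon$ up to the \emph{future} time $t_{k+1}^n$, so it is anticipating and not adapted to the filtration generated by $X_\varepsilon$. Definition \ref{dB1} only quantifies over adapted c\`adl\`ag integrands, so this $H_\varepsilon$ is not an admissible test process and the contradiction never gets off the ground; moreover $\int_0^1 H_\varepsilon\,dM_\varepsilon$ with this choice would recover the total variation of $X_\varepsilon$, not the quadratic variation you also need to control. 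The actual proof of necessity (in \cite{KP} and in the M\'emin--S\l omi\'nski work it draws on) goes through the equivalent condition UT: one shows that goodness forces tightness of the family $\{\int_0^t H_\varepsilon\,dX_\varepsilon\}$ over all \emph{predictable simple} integrands bounded by $1$ (otherwise one extracts a subsequence of admissible integrands along which the integrals escape to infinity in probability, contradicting convergence in law to the a.s.\ finite limit integral), and then invokes the separate equivalence UT $\Longleftrightarrow$ UCV for sequences converging in law. You would need to replace your anticipating construction by this route, or by adapted approximations of the sign of the \emph{past} increments together with a separate argument for the martingale part.
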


\begin{acknowledgement}
\emph{The authors acknowledge the support of the CETIC (African Center of
Excellence in Information and Communication Technologies).}
\end{acknowledgement}

\end{document}